\DeclareFontFamily{OT1}{pzc}{}
\DeclareFontShape{OT1}{pzc}{m}{it}{<-> s * [1.100] pzcmi7t}{}
\DeclareMathAlphabet{\mathpzc}{OT1}{pzc}{m}{it}
    \patchcmd{\section}{\scshape}{\large\bfseries}{}{}
    \renewcommand{\@secnumfont}{\bfseries}
\numberwithin{equation}{section}
\newtheorem{theorem}{Theorem}[section]
\newtheorem*{theorem*}{Theorem}
\newtheorem*{conjecture*}{Conjecture}
\newtheorem{corollary}[theorem]{Corollary}
\newtheorem{lemma}[theorem]{Lemma}
\newtheorem{proposition}[theorem]{Proposition}
\theoremstyle{definition}
\newtheorem{remark}[theorem]{Remark}
\newtheorem{example}[theorem]{Example}
\def\NN{\mathsf{N}}
\def\PPP{\PPP}
\def\Ker{\mathsf{Ker}}
\def\epi{\twoheadrightarrow}
\def\mono{\rightarrowtail}
\def\ZZ{\mathbb{Z}}
\def\Pre{\mathsf{Pre}}
\def\KK{\mathbb{K}}
\def\MC{\mathsf{MC}}
\def\MH{\mathsf{MH}}
\def\PH{\mathsf{PH}}
\def\CCC{\mathpzc{C}}
\def\RRR{\mathpzc{R}}
\def\SS{\mathscr{N}}
\def\MM{\mathscr{M}}
\def\Cay{\mathsf{Cay}}
\def\Rel{\mathsf{Rel}}
\def\QQ{\mathbb{Q}}
\def\RR{\mathcal{R}}
\def\AA{\mathcal{A}}
\def\dist{\mathsf{dist}}
\let\oldtocsection=\tocsection 
\let\oldtocsubsection=\tocsubsection 
\renewcommand{\tocsection}[2]{\hspace{0mm}\oldtocsection{#1}{#2}}
\renewcommand{\tocsubsection}[2]{\hspace{1em}\oldtocsubsection{#1}{#2}}
\title{On the path homology of Cayley digraphs and covering digraphs}
\author{Shaobo Di}
\address{
Yanqi Lake Beijing Institute of Mathematical Sciences and Applications (BIMSA)}
\email{dishaobo@bimsa.cn}
\author{Sergei O. Ivanov}
\address{
Yanqi Lake Beijing Institute of Mathematical Sciences and Applications (BIMSA)}
\email{ivanov.s.o.1986@gmail.com, ivanov.s.o.1986@bimsa.cn}
\author{Lev Mukoseev}
\address{
St. Petersburg University}
\email{la.mukoseev@gmail.com}
\author{Mengmeng Zhang}
\address{
Yanqi Lake Beijing Institute of Mathematical Sciences and Applications (BIMSA)}
\email{mengmengzhang@bimsa.cn}
\thanks{The work is supported by  Beijing Institute of Mathematical Sciences and Applications}
\begin{document}

\maketitle

\begin{abstract}
We develop a theory of covering digraphs, similar to the theory of covering spaces. By applying this theory to Cayley digraphs, we build a ``bridge'' between GLMY-theory and group homology theory, which helps to reduce path homology calculations to group homology computations. We show some cases where this approach allows us to fully express path homology in terms of group homology. To illustrate this method, we provide a path homology computation for the Cayley digraph of the additive group of rational numbers with a generating set consisting of inverses to factorials. The main tool in our work is a filtered simplicial set associated with a digraph, which we call the filtered nerve of a digraph.
\end{abstract}

\section*{Introduction}

Grigoryan, Lin, Muranov, and Yau developed the theory of path homology for digraphs in an unpublished preprint \cite{grigor2012homologies}, which is now referred to as GLMY-theory or GLMY-homology theory \cite{grigor2014homotopy, grigor2017homologies, grigor2020path, grigor2014graphs, grigor2018path,grigor2018path2,grigor2022advances,ivanov2022simplicial}. This theory is in many respects parallel to the theory of homology of spaces. In particular, it is homotopy invariant in a sense appropriate for digraphs and there are K\"unneth formulas for box product and join. They also defined a notion of the fundamental group of a digraph, whose abelianization coincides with the first integral path homology group \cite{grigor2014homotopy}. The notion of fundamental group was later extended to the notion of fundamental groupoid by Grigor’yan, Jimenez, and Muranov  \cite{grigor2018fundamental}.  

In our work, we continue the analogy between algebraic topology for spaces and ``algebraic topology for digraphs'' by developing a theory of coverings for digraphs. To be more precise, we develop a theory of $l$-fundamental groups and $l$-coverings of digraphs,  where $l\geq 1$ is a natural parameter. For $l=1$ this theory is just the ordinary theory of fundamental groups and covering spaces, where a digraph is treated as a one-dimensional space. The most important case for us is $l=2,$ because in this case this theory is compatible with GLMY-theory, and the $2$-fundamental group $\pi_1^2(X)$ coincides with the group considered in \cite{grigor2014homotopy,grigor2018fundamental}. A digraph is called connected if its underlying undirected graph is connected. We prove that for the universal $2$-covering $U\to X$ of a connected digraph $X$ there is a spectral sequence
\[H_*( \pi^2_1(X),\PH_*(U) ) \Rightarrow \PH_*(X),\]
where $\PH_*(-)$ denotes the path homology, and $H_*(\pi^2_1(X),-)$ the group homology of the group $\pi_1^2(X)$. This spectral sequence is similar to the Serre spectral sequence of the homotopy fibration sequence $\mathpzc{U}\to \mathpzc{X} \to B\pi_1(\mathpzc{X})$ for a space $\mathpzc{X}$ and its universal cover $\mathpzc{U}.$ This spectral sequence is especially useful when $\PH_n(U)=0$ for $n\geq 1,$ because in this case we have an equation 
\[\PH_*(X)=H_*(\pi_1^2(X)),\] 
which reduces the computation of the path homology to the homology of the fundamental group. 

Next, we apply the obtained theory to the case of Cayley digraphs. We describe generating subsets $S$ of a group $G$ whose Cayley digraphs have a trivial $l$-fundamental group. We call them $l$-freely generating subsets. 
Roughly speaking, these are subsets of groups such that all relations between its elements are determined by some relations of the form $s_1 \dots s_n = t_1 \dots t_m$ for $n, m \leq l, s_i,t_j\in S.$ We show that the universal $2$-covering of the Cayley digraph $\Cay(G,S)$ is a Cayley digraph of a $2$-freely generating subset $\tilde{S}$ of a group $\tilde{G}$, which is obtained from $G$ by ``removing'' all relations that are not of the form $s_1 \dots s_n = t_1 \dots t_m$ for $0 \leq n, m \leq 2$. Furthermore, we obtain that there is a short exact sequence
\[1\longrightarrow \pi^2_1(\Cay(G,S))\longrightarrow \tilde G \longrightarrow G \longrightarrow 1,\]
and, if we set $\rho:=\Ker(\tilde G \to G),$ we obtain a spectral sequence
\[ 
H_*( \rho, \PH_*( \Cay(\tilde G,\tilde S) )) 
\Rightarrow  
\PH_*( \Cay(G,S)). 
\]
In a sense, this spectral sequence reduces the computation of path homology of Cayley digraphs to the computation of path homology of Cayley digraphs of $2$-freely generating subsets of groups and the computation of group homology with coefficients in certain modules. As well as in the general case, this spectral sequence is especially useful when $\Cay(\tilde G,\tilde S)$ has trivial reduced path homology, because in this case we have an isomorphism
\[\PH_*(\Cay(G,S))\cong H_*(\rho).\]
Our experience with concrete calculations indicates that this happens quite often but not always. For further investigation of path homology of Cayley digraphs, it would be important to understand in which cases reduced path homology of a Cayley digraph of a $2$-freely generating subset of a group is trivial.

The most convenient class of groups for applying the theory we have developed is the class of abelian groups.  It is well known that for a torsion free abelian group $\rho$ its homology with coefficients in a commutative ring $\KK$ can be described as the exterior algebra $H_*(\rho) = \Lambda^*(\rho\otimes_\ZZ \KK).$ Using this equality, we prove a general theorem, which allows us to compute the path homology of Cayley digraphs for many subsets of abelian groups (Theorem \ref{th:abelian}).
For example, we compute path homology of the Cayley digraph of $\QQ$ with the generating subset consisting of the inverses of factorials:
\[ \PH_*( \Cay( \QQ , \{ 1/n!\mid n\geq 2
\} )) \cong  \Lambda^*( \KK^{\oplus \mathbb{N}} ).\]
Thereby, path homology modules of this Cayley digraph are infinitely generated free $\KK$-modules in all positive degrees.

Before developing the theory of $l$-fundamental groups and $l$-coverings of digraphs, we needed to formulate the GLMY-theory (and magnitude homology theory) in the style of Asao \cite{asao2023magnitude,asao2023magnitude2}. Following Asao and a remark of Hepworth-Willerton \cite[Remark 46]{hepworth2017categorifying} we construct a filtered space associated with a digraph $X$ 
\[\SS^0(X)\subseteq \SS^1(X) \subseteq \SS^2(X) \subseteq \dots \subseteq \SS(X),\]
that we call the filtered nerve of the digraph. Here for each $l\geq 0$ the simplicial set $\SS^l(X)$ is in some sense the minimal simplicial set generated by pathes of length $\leq l.$ In particular, the dimension of $\SS^l(X)$ is at most $l.$ Asao showed that the spectral sequence of this filtered space contains the information about both magnitude homology and path homology. Therefore, we refer to it as the magnitude-path spectral sequence. In this paper, we adhere to the ideology that the whole ``homotopy digraph theory'' should be interpreted in terms of the filtered space $\SS^*(X)$. In particular, we define the $l$-fundamental group of a digraph as the fundamental group of $\SS^l(X)$, and the $l$-coverings of the digraph correspond to the coverings of the space $\SS^l(X).$

\tableofcontents

\section{\bf Filtered nerve of a digraph}

\subsection{Paths and distance in a digraph}

A digraph is a couple $X=(X_0,A(X)),$ 
where $X_0$ is a set and $A(X)\subseteq (X_0\times X_0)\setminus \Delta(X_0)$
and $\Delta(X_0)=\{(x,x) \mid x\in X_0\}.$ Elements of $X_0$ are called vertices and elements of $A(X)$ are called arrows (or directed edges, or arcs). We set
\begin{equation}
X_1:=A(X)\cup \Delta(X_0).    
\end{equation}
A morphism of digraphs $f:X\to Y$ is a map $f:X_0\to Y_0$ such that $(v,u)\in  X_1$ implies $(f(v),f(u))\in  Y_1.$ A disadvantage of the set of arrows $A(X)$ is that it is not functorial by $X,$ so we prefer to use $X_1.$

An $n$-path in digraph $X$ from a vertex $x$ to a vertex $x'$ is a sequence of vertices $(x_0,\dots,x_n)$ such that $(x_i,x_{i+1})\in X_1$ and $x=x_0,x'=x_n.$ An $n$-path is called regular, if $x_i\ne x_{i+1}.$ For $x,x'\in X_0$ we denote by $\dist(x,x')$ the minimal $n$ such that there is an $n$-path from $x$ to $x'.$ If such a path does not exists, we set $\dist(x,x')=+\infty.$ This gives a structure of extended quasi-metric space on $X_0$. In particular, for any vertex $x$ we have $\dist(x,x)=0,$ and for any three vertices $x,y,z$ we have the triangle inequality
\begin{equation}\label{eq:Triangle_inequality}
\dist(x,z) \leq  \dist(x,y) + \dist(y,z).
\end{equation}
Note that a map $f:X_0\to Y_0$ is a morphism of digraphs $X\to Y$ if and only if for any vertices $x,x'$ of $X$ we have 
\begin{equation}\label{eq:dist_of_images}
\dist(f(x),f(x'))\leq \dist(x,x').   
\end{equation}
We find it convenient to write the fact that a pair $(x,x')$ is an arrow in the form of the equality $\dist(x,x')=1,$ and the fact that there is a path from $x$ to $x'$ in the form of the inequality $\dist(x,x')<\infty.$

\subsection{Accessible sequences and the nerve of a digraph}

We say that a sequence $(x_0,\dots,x_n)$ of vertices of $X$ is \emph{accessible}, if $\dist(x_i,x_{i+1})<\infty$ for each $0\leq i<n.$ We consider a simplicial set $\SS(X)$ whose $n$-simplices are accessible sequences  $(x_0,\dots,x_n).$ The face and degeneracy maps are defined by the formulas 
\begin{equation}\label{eq:d_i-formulas}
\begin{split}
d_i(x_0,\dots,x_n) &= (x_0,\dots, \hat x_i,\dots,x_n), \\
s_i(x_0,\dots,x_n) &= (x_0,\dots,x_i,x_i,\dots,x_n).
\end{split}
\end{equation}
The simplicial set $\SS(X)$ will be called \emph{nerve} of $X.$ Degenerate simplices of this simplicial set are called non-regular accessible sequences, and non-degenerate simplices are called regular accessible sequences.

A preorder is a category whose hom-sets have cardinality not greater than $1.$ For any digraph $X$ we construct a preorder $\Pre(X)$ whose objects are vertices of $X$ and the hom-sets are described as follows
\begin{equation}
\Pre(X)(x,y) = \begin{cases} 
\{(x,y)\}, & \dist(x,y)<\infty \\ 
\emptyset, & \text{ else.}
\end{cases}
\end{equation}

\begin{proposition} \label{prop:nrv}
The nerve of $X$ is naturally isomorphic to the nerve of the preorder
\[
\SS(X) \cong {\sf Nrv}(\Pre(X)).
\]  
\end{proposition}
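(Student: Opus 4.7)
The plan is to unpack the definition of the nerve of a preorder and observe that the data matches the definition of $\SS(X)$ on the nose, then check functoriality to get naturality.

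First, I would recall that for any small category $\CCC,$ an $n$-simplex of ${\sf Nrv}(\CCC)$ is a sequence of composable morphisms $x_0 \to x_1 \to \cdots \to x_n.$ When $\CCC$ is a preorder, there is at most one morphism between any two objects, so an $n$-simplex is determined entirely by the underlying sequence of objects $(x_0,\dots,x_n)$ subject to the condition that a morphism $x_{i-1}\to x_i$ exists for each $1\leq i\leq n.$ Applying this to $\Pre(X),$ the existence condition becomes $\dist(x_{i-1},x_i)<\infty$ for each $i,$ which is exactly the definition of an accessible sequence. This yields a bijection of sets
\[ {\sf Nrv}(\Pre(X))_n \;\xrightarrow{\;\cong\;}\; \SS(X)_n, \qquad (x_0\to\cdots\to x_n)\longmapsto (x_0,\dots,x_n). \]

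Next, I would verify that this bijection commutes with the face and degeneracy operators. In the nerve of any category, $d_i$ for $0<i<n$ composes the $i$-th pair of arrows, and for $i=0$ or $i=n$ it drops the first or last arrow; in a preorder, composition is uniquely determined, so on the level of object sequences all three cases simply delete $x_i$. This matches the first formula in (\ref{eq:d_i-formulas}). Similarly, $s_i$ in ${\sf Nrv}(\CCC)$ inserts the identity morphism at $x_i,$ which on the level of object sequences duplicates $x_i,$ matching the second formula in (\ref{eq:d_i-formulas}).

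For naturality, I would promote $\Pre$ to a functor from the category of digraphs to the category of preorders. Given a morphism $f: X\to Y$ of digraphs, characterization (\ref{eq:dist_of_images}) says $\dist(f(x),f(x'))\leq \dist(x,x'),$ so $\dist(x,x')<\infty$ implies $\dist(f(x),f(x'))<\infty.$ Thus $f$ induces a functor $\Pre(f):\Pre(X)\to\Pre(Y)$ on the level of preorders, and this assignment is clearly functorial. Since ${\sf Nrv}$ is a functor on categories and the bijection above is defined uniformly in $X,$ the isomorphism $\SS(X)\cong {\sf Nrv}(\Pre(X))$ is natural in $X.$

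There is no real obstacle here: the content is essentially the observation that an accessible sequence is the same datum as a chain of morphisms in $\Pre(X),$ and the only thing requiring attention is bookkeeping to confirm that ``composition in a preorder just drops the middle object'' so that the face maps (\ref{eq:d_i-formulas}) line up with the standard nerve face maps.
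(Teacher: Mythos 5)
Your proof is correct and follows exactly the same route as the paper: the paper's one-line proof exhibits the correspondence $(x_0,\dots,x_n)\mapsto ((x_0,x_1),\dots,(x_{n-1},x_n))$, which is precisely the inverse of your bijection, and your additional verifications of the simplicial identities and naturality are the routine details the paper leaves implicit.
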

\begin{proof}
The correspondence $(x_0,\dots,x_n)\mapsto ((x_0,x_1),(x_1,x_2),\dots,(x_{n-1},x_n))$ defines the isomorphism. 
\end{proof}

\subsection{Filtration of the nerve} 
Following Asao \cite[Def.2.3]{asao2023magnitude} we define the length of an accessible sequence by the formula 
\begin{equation}
{\sf L}((x_0,\dots,x_n)) := \sum_{i=0}^{n-1} \dist( x_i,x_{i+1}).
\end{equation}
Using the triangle inequality  \eqref{eq:Triangle_inequality}, it is easy to check that  the following properties are satisfied
\begin{equation}\label{eq:L}
{\sf L}(d_i(\sigma)) \leq {\sf L}(\sigma), \hspace{1cm} {\sf L}(s_i(\sigma)) = (\sigma) 
\end{equation}
for any $\sigma\in \SS(X)_n$ and any $0\leq i\leq n.$

For any natural $l$ we denote by 
$\SS^l(X)_n$ the subset of $\SS(X)_n$ consisting of accessible sequences $\sigma$ such that ${\sf L}(\sigma)\leq l.$ The inequalities \eqref{eq:L} show that these subsets define a simplicial subset $\SS^l(X) \subseteq \SS(X).$ For all $l=0,1,\dots$ it gives an exhaustive filtration of $\SS(X)$
\begin{equation}
\SS^0(X) \subseteq \SS^1(X) \subseteq \SS^2(X) \subseteq \dots \subseteq \SS(X), \hspace{5mm}
\SS(X) = \bigcup_{l=0}^\infty \SS^l(X).
\end{equation}
We also consider the quotients 
\begin{equation}
\MM^l(X):=\SS^l(X)/\SS^{l-1}(X)    
\end{equation}
for $l\geq 1$ and $\MM^0(X):= \SS^0(X) \sqcup *.$ The simplicial set $\MM^l(X)$ will be called $l$-th magnitude space of $X$. 

It is easy to see that any accessible sequence $(x_0,\dots, x_n)$ of length $<n$ is non-regular. Therefore all simplices of $\SS^l(X)_n$ are degenerate for $n>l.$ It follows that
\begin{equation}\label{eq:ner^l-dim}
    {\sf dim}(\SS^l(X)) \leq l.
\end{equation}
It is easy to see that $\SS^l(X)$ is the minimal simplicial subset of $\SS(X)$ containing all regular paths of length $\leq l.$

\subsection{Filtered nerve of the box  product}

The box product $X \square Y$ of digraphs $X$ and $Y$ is a digraph such that $(X\square Y)_0=X_0\times Y_0$  and $((x,y),(x',y'))\in (X \square Y)_1$ if and only if $((x,x'),(y,y'))\in (X_1 \times \Delta(Y_0))\cup (\Delta(X_0)\times Y_1)$
\begin{equation}
(X\square Y)_1 \cong (X_1 \times \Delta(Y_0))\cup (\Delta(X_0)\times Y_1).
\end{equation}

\begin{lemma} 
A sequence of couples of vertices 
$((x_0,y_0),\dots,(x_n,y_n))$ is a path in $X\square Y$ if and only if $(x_0,\dots,x_n)$ and $(y_0,\dots,y_n)$ are paths in $X$ and $Y$ and there exists a partition $I\sqcup J = \{1,\dots,n\}$ such that $x_{i-1}=x_i$ and $y_{j-1}=y_{j}$ for any $i\in I,j\in J.$
\end{lemma}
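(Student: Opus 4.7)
The plan is to unfold both directions by reducing the notion of a path to the arrow condition at each step and applying the explicit description $(X\square Y)_1 = (X_1 \times \Delta(Y_0))\cup (\Delta(X_0)\times Y_1)$ stated just above the lemma. In particular, the condition $((x_{k-1},y_{k-1}),(x_k,y_k))\in (X\square Y)_1$ is equivalent to the disjunction: either (A) $(x_{k-1},x_k)\in X_1$ and $y_{k-1}=y_k$, or (B) $x_{k-1}=x_k$ and $(y_{k-1},y_k)\in Y_1$.

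For the forward direction, assume $((x_0,y_0),\dots,(x_n,y_n))$ is a path in $X\square Y$, so (A) or (B) holds for each $k\in\{1,\dots,n\}$. I would define $I$ to be the set of indices $k$ for which (A) fails, and set $J:=\{1,\dots,n\}\setminus I$. By construction $I\sqcup J$ is a partition; for $k\in I$ the failure of (A) together with the disjunction forces (B) to hold, whence $x_{k-1}=x_k$; for $j\in J$ the validity of (A) gives $y_{j-1}=y_j$. Moreover, in either alternative $(x_{k-1},x_k)\in X_1$ (either directly from (A), or via $\Delta(X_0)\subseteq X_1$ under (B)), so $(x_0,\dots,x_n)$ is a path in $X$; the argument for $Y$ is symmetric.

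For the converse, suppose we are given paths $(x_0,\dots,x_n)$ in $X$ and $(y_0,\dots,y_n)$ in $Y$ together with a partition $I\sqcup J=\{1,\dots,n\}$ with the stated property. I would verify the arrow condition at each $k$: if $k\in I$ then $x_{k-1}=x_k$, so $(x_{k-1},x_k)\in \Delta(X_0)$, and $(y_{k-1},y_k)\in Y_1$ because $(y_0,\dots,y_n)$ is a path, exhibiting the pair in $\Delta(X_0)\times Y_1$; for $k\in J$ the situation is symmetric and we land in $X_1\times \Delta(Y_0)$. Hence $((x_{k-1},y_{k-1}),(x_k,y_k))\in (X\square Y)_1$ for every $k$, as required.

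The only subtle point, and hence the main (though mild) obstacle, is the handling of steps where both coordinates are constant: such an index satisfies both (A) and (B), so the desired partition is not unique. This is absorbed by the asymmetric definition of $I$ above, which simply assigns every ambiguous index to $J$. Since the lemma only asserts \emph{existence} of the partition, no uniqueness statement is required, and any consistent choice at such indices works.
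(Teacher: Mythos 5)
Your proof is correct and is exactly the routine unfolding of the definition of $(X\square Y)_1$ that the paper dismisses with ``Obvious''; the handling of doubly-constant steps by assigning them to $J$ is a valid way to produce the (non-unique) partition. Nothing further is needed.
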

\begin{proof}
Obvious.
\end{proof}

\begin{corollary}\label{corollary:dist_box}
The distance in the box  product satisfy the following equation
\begin{equation}
\dist_{X\Box Y}((x,y),(x',y')) = \dist_{X}(x,x')+\dist_Y(y,y').
\end{equation}
\end{corollary}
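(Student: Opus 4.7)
The plan is to establish both inequalities in the asserted distance formula separately using the preceding lemma. Set $p := \dist_X(x,x')$ and $q := \dist_Y(y,y')$, and assume first that both are finite.

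For the $\leq$ direction, I would pick geodesic paths $(x=x_0,\dots,x_p=x')$ in $X$ and $(y=y_0,\dots,y_q=y')$ in $Y$ and concatenate them into
\[
((x_0,y_0),(x_1,y_0),\dots,(x_p,y_0),(x_p,y_1),\dots,(x_p,y_q)),
\]
a sequence of length $p+q$ in $X \square Y$ from $(x,y)$ to $(x',y')$. Applying the lemma with $J=\{1,\dots,p\}$ (steps with constant $Y$-coordinate) and $I=\{p+1,\dots,p+q\}$ (steps with constant $X$-coordinate) certifies that this is a valid path, so $\dist_{X\square Y}((x,y),(x',y')) \leq p+q$.

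For the $\geq$ direction, I would take any path $((x_0,y_0),\dots,(x_n,y_n))$ in $X\square Y$ from $(x,y)$ to $(x',y')$ realizing the distance. The lemma supplies a partition $I \sqcup J = \{1,\dots,n\}$ with $x_{i-1}=x_i$ for $i\in I$ and $y_{j-1}=y_j$ for $j\in J$. Deleting the stationary steps indexed by $I$ from the $X$-projection leaves a path in $X$ from $x$ to $x'$ whose remaining $|J|$ steps are genuine arrows, so $\dist_X(x,x') \leq |J|$; symmetrically $\dist_Y(y,y') \leq |I|$. Summing yields $p+q \leq |I|+|J| = n$, which is the reverse inequality. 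The case where $p$ or $q$ is $+\infty$ falls out automatically: the upper bound is vacuous, and a finite-length path in $X\square Y$ would, by the same projection argument, force both factor distances to be finite, contradicting the assumption.

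I do not anticipate a serious obstacle: the combinatorial content is already packaged in the lemma, and the remainder is bookkeeping of step counts across the partition. The only minor point to watch is the harmless abuse that the arrow sets $X_1$ and $Y_1$ include the diagonal, so the concatenated path in the upper bound and the projections in the lower bound are legitimate paths in the sense defined at the start of the section.
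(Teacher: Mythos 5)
Your argument is correct and is exactly the derivation the paper intends: the corollary is stated without proof as an immediate consequence of the preceding lemma, and your two inequalities (concatenating geodesics for $\leq$, projecting and deleting stationary steps via the partition for $\geq$, with the infinite case handled by the same projection) fill in that gap in the standard way. The only nitpick is that the $|J|$ remaining steps of the $X$-projection need not be \emph{genuine} arrows (the partition only forces stationarity on the $I$-steps), but this is harmless since a path of length $|J|$ still certifies $\dist_X(x,x')\leq |J|$.
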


\begin{proposition}[{cf.  \cite[Prop.41.]{hepworth2017categorifying}}]
For any digraphs $X,Y$ there is an natural isomorphism
$\SS(X \square Y)\cong \SS(X) \times \SS(Y),$
and for any $l$ this isomorphism induces a isomorphisms
\begin{equation*}
\SS^l(X\square Y) \cong   \bigcup_{l_1+l_2=l} \SS^{l_1}(X) \times  \SS^{l_2}(Y),
\hspace{5mm}
\MM^l(X\square Y) \cong \bigvee_{l_1+l_2=l} \MM^{l_1}(X) \wedge \MM^{l_2}(Y) 
\end{equation*}
\end{proposition}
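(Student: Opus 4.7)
The plan is to first deduce the unfiltered isomorphism from Proposition~\ref{prop:nrv} and then propagate lengths through the product.

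First, Corollary~\ref{corollary:dist_box} implies that $\dist_{X\square Y}((x,y),(x',y'))<\infty$ if and only if both $\dist_X(x,x')<\infty$ and $\dist_Y(y,y')<\infty$, so as preorders
\[\Pre(X\square Y)\cong \Pre(X)\times \Pre(Y).\]
Combining this with Proposition~\ref{prop:nrv} and the standard fact that the nerve functor on categories preserves products yields $\SS(X\square Y)\cong \SS(X)\times \SS(Y)$, where explicitly an accessible sequence $((x_0,y_0),\dots,(x_n,y_n))$ corresponds to the pair $((x_0,\dots,x_n),(y_0,\dots,y_n))$.

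Next I would show that under this identification the length behaves additively. Another application of Corollary~\ref{corollary:dist_box} gives
\[{\sf L}\bigl((x_i,y_i)_{i=0}^n\bigr) \;=\; {\sf L}\bigl((x_i)\bigr) + {\sf L}\bigl((y_i)\bigr).\]
Hence a simplex lies in $\SS^l(X\square Y)$ if and only if $l_1:={\sf L}((x_i))$ and $l_2:={\sf L}((y_i))$ satisfy $l_1+l_2\leq l$; since the filtration is monotone, $\bigcup_{l_1+l_2\leq l}\SS^{l_1}(X)\times\SS^{l_2}(Y)=\bigcup_{l_1+l_2=l}\SS^{l_1}(X)\times\SS^{l_2}(Y)$, giving the second claimed formula.

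Finally, to deduce the wedge decomposition of $\MM^l(X\square Y)$ for $l\geq 1$, I would verify two set-theoretic identities inside $\SS^l(X\square Y)$: for $(l_1,l_2)\neq(l_1',l_2')$ with $l_1+l_2=l_1'+l_2'=l$ (say $l_1<l_1'$),
\[\bigl(\SS^{l_1}(X)\times\SS^{l_2}(Y)\bigr)\cap\bigl(\SS^{l_1'}(X)\times\SS^{l_2'}(Y)\bigr)=\SS^{l_1}(X)\times\SS^{l_2'}(Y)\subseteq \SS^{l-1}(X\square Y),\]
and for any $(l_1,l_2)$ with $l_1+l_2=l$,
\[\bigl(\SS^{l_1}(X)\times\SS^{l_2}(Y)\bigr)\cap\SS^{l-1}(X\square Y)=\SS^{l_1-1}(X)\times\SS^{l_2}(Y)\,\cup\,\SS^{l_1}(X)\times\SS^{l_2-1}(Y).\]
Together these imply that collapsing $\SS^{l-1}(X\square Y)$ splits the quotient into a wedge indexed by pairs $(l_1,l_2)$ with $l_1+l_2=l$, whose $(l_1,l_2)$-summand has the form $(A\times C)/(B\times C\cup A\times D)$ for $B\hookrightarrow A=\SS^{l_1}(X)$ and $D\hookrightarrow C=\SS^{l_2}(Y)$. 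Invoking the standard identity $(A\times C)/(B\times C\cup A\times D)\cong (A/B)\wedge(C/D)$ then yields $\MM^{l_1}(X)\wedge\MM^{l_2}(Y)$. The case $l=0$, handled separately, reduces by definition to $(\SS^0(X)\times\SS^0(Y))\sqcup *\cong(\SS^0(X)\sqcup *)\wedge(\SS^0(Y)\sqcup *)$.

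The main obstacle is the bookkeeping in this last step: verifying the intersection identities and keeping track of basepoints so that the standard smash-product formula for quotients applies cleanly (including the slightly degenerate $l=0$ case where the convention $\MM^0=\SS^0\sqcup *$ must be reconciled with the wedge/smash formula).
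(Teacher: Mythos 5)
Your proposal is correct and follows essentially the same route as the paper: the explicit simplexwise bijection, additivity of ${\sf L}$ via Corollary \ref{corollary:dist_box}, and the same two intersection identities feeding into the standard quotient-to-smash formula. The only cosmetic difference is that you derive the unfiltered isomorphism from $\Pre(X\square Y)\cong\Pre(X)\times\Pre(Y)$ and nerve-preservation of products rather than writing the map directly, and you flag the $l=0$ convention, which the paper leaves implicit.
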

\begin{proof} We define the map $f: \SS(X\square Y) \to \SS(X) \times \SS(Y)$ by the formula 
\[((x_0,y_0),\dots,(x_n,y_n))\mapsto ((x_0,\dots, x_n),(y_0,\dots,y_n)).\]
Corollary \ref{corollary:dist_box} implies that 
\begin{equation}
{\sf L}((x_0,y_0),\dots,(x_n,y_n)) = {\sf L} (x_0,\dots,x_n) + {\sf L}(y_0,\dots,y_n).
\end{equation}
The first two isomorphisms follow. For simplicity we set $Z^l:=\SS^l(Z).$ Then the last isomorphism follows from the formulas 
\begin{equation*}
(X^{l_1}\times Y^{l_2}) \cap (  X^{l_1'}\times Y^{l_2'}) = X^{{\sf min}(l_1,l_1')} \times Y^{{\sf min}(l_2,l'_2)} \subseteq f((X\square Y)^{l-1})
\end{equation*}
if $(l_1,l_2)\ne (l_1',l_2')$ and $l_1+l_2=l=l'_1+l'_2,$ and
\[ (X^{l_1}\times Y^{l_2}) \cap f(
 (X\square Y )^{l-1} ) = (
X^{l_1-1} \times 
Y^{l_2}) \cup (
X^{l_1} \times 
Y^{l_2-1}).\]
\end{proof}

\subsection{Reminder on homology of simplicial sets}

Let $\KK$ be a commutative ring. For a simplicial $\KK$-module $M$ we denote by $C(M)$ the chain complex with components $C(M)_n=M_n$ and the differential $\partial_n:C(M)_n\to C(M)_{n-1}$ defined by $\partial_n=\sum_{i} (-1)^i d_i.$ We denote by $D(M)$ the subcomplex of degenerate simplices of $C(M)$ whose components are $D(M)_n=\sum_{i} s_i(M_{n-1}).$ The \emph{Moore complex} (or normalised complex) of $M$ is the quotient complex
$N(M) = C(M)/D(M).$
It is well known that  $N$ is an exact functor.
It is also well known that $D(M)$ is contractible and 
$H_*(N(M))\cong H_*(C(M)).$

For a simplicial set $S$ we denote by  $\KK(S)$ the free simplicial $\KK$-module whose module of $n$-simplices $\KK(S)_n=\KK(S_n)$ is freely generated by $S_n.$ Then the normalised complex of $S$ and homology of $S$ are defined by 
\begin{equation}
N(S) = N(\KK(S)), \hspace{1cm} H_*(S) = H_*(N(S)), 
\end{equation}
we will also use notation $C(S)=C(\KK(S))$ and $D(S)=D(\KK(S)).$
If $S'\subseteq S$ is a simplicial subset, then $\KK(S')$ is a simplicial submodule of $\KK(S)$, and, since $N$ is an exact functor, $N(S') \to N(S)$ is a monomorphism. We will usually identify it with its image $N(S')\subseteq N(S).$
If $S=(S,*)$ is a pointed simplicial set, the reduced versions of the normalized complex of $S$ and homology are defined by the formulas
\begin{equation}
\tilde N(S) = N(S)/N(*), \hspace{1cm} \tilde H_*(S) := H_*( \tilde N(S) ).
\end{equation}
For any simplicial subset $S'\subseteq S$ we define $S/S'$ as the pushout of the diagram $* \leftarrow S' \to S.$ If $S'$ is non-empty, then $(S/S')_n$ is just the quotient of $S_n$ by the equivalence relation, where all elements of $S'_n$ are identified. If $S'=\emptyset,$ then $S/\emptyset = S\sqcup *.$ Anyway  $S/S'$ has a natural base-point and 
\begin{equation}\label{eq:homology_of_quotient}
\tilde N(S/S') \cong N(S)/N(S'), \hspace{1cm} \tilde H_*(S/S') \cong H_*(N(S)/N(S')).
\end{equation}
If $S'$ is a pointed simplicial subset of $S$ we also have a formula $\tilde N(S/S')\cong \tilde N(S)/\tilde N(S'),$ which implies that there is a long exact sequence 
\begin{equation}\label{eq:connecting_homo}
\dots \to \tilde H_n(S') \to \tilde H_n(S) \to \tilde H_n(S/S') \overset{\delta}\to \tilde H_{n-1}(S') \to \dots.
\end{equation}
The homomorphism $\delta$ is called connecting homomorphism for the cofiber sequence $S'\hookrightarrow S \to S/S'$.

\subsection{Chain complex of regular accessible sequences}

Let $\KK$ be a commutative ring.  For a digraph $X$ we denote by $\Lambda(X)$ a chain complex such that 
$\Lambda_n(X)$  is an $\KK$-module 
spanned by \emph{accessible} sequences  $(x_0,\dots,x_n).$ The differential 
$\partial_n : \Lambda_n(X) \to \Lambda_{n-1}(X)$
is defined by the formula
\begin{equation}\label{eq:differential}
\partial_n(x_0,\dots,x_n)=\sum_{i=0}^n (-1)^i (x_0,\dots,\hat{x_i},  \dots,x_n).
\end{equation}
We denote by $D(X)$ a chain subcomplex of $\Lambda(X)$ generated by non-regular sequences. Finally we set
\begin{equation}
\RR(X)=\Lambda(X)/D(X).
\end{equation}
For an element $a\in \Lambda_n(X)$ we denote by $[a]$ its image in $\RR_n(X),$ and for an accessible sequence of vertices $(x_0,\dots,x_n)$ we set $[x_0,\dots,x_n]:=[(x_0,\dots,x_n)].$

\begin{proposition} For a digraph $X$ we have 
\begin{equation}
\Lambda(X) = C(\SS(X)), \hspace{1cm} \RR(X) = N(\SS(X)).
\end{equation}
Moreover, an accessible sequence of vertices $(x_0,\dots,x_n)$ is non-regular if and only if it is in $D(\SS(X)).$
\end{proposition}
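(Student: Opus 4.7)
The plan is essentially to unwind the definitions, with the only substantive observation being that the combinatorial notion of non-regular sequence coincides with the simplicial notion of degenerate simplex. First, I would observe that by construction $\Lambda_n(X)$ is the free $\KK$-module on the set of accessible sequences $(x_0,\dots,x_n)$, which is exactly $\SS(X)_n$; hence $\Lambda_n(X) = \KK(\SS(X))_n = C(\SS(X))_n$ as $\KK$-modules. The differential \eqref{eq:differential} matches $\sum_i (-1)^i d_i$ under the face map formula \eqref{eq:d_i-formulas}, so $\Lambda(X) = C(\SS(X))$ holds as chain complexes.

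Next I would establish the ``moreover'' clause, namely that an accessible sequence $(x_0,\dots,x_n)$ is non-regular if and only if it belongs to $D(\SS(X))_n = \sum_i s_i\bigl(\KK(\SS(X))_{n-1}\bigr)$. For the forward direction, if $x_i = x_{i+1}$ for some $i$, then the sequence $(x_0,\dots,\hat{x_{i+1}},\dots,x_n)$ is still accessible: the only new consecutive pair it creates is $(x_i,x_{i+2})$, but since $x_i=x_{i+1}$ we have $\dist(x_i,x_{i+2})=\dist(x_{i+1},x_{i+2})<\infty$. By \eqref{eq:d_i-formulas} the original sequence equals $s_i$ applied to this shorter accessible sequence, hence it lies in $D(\SS(X))_n$. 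For the converse, any generator $s_i(\sigma)$ of $D(\SS(X))_n$ repeats a vertex in positions $i,i+1$, so it is non-regular. Since distinct accessible sequences are $\KK$-linearly independent in $\Lambda_n(X)$, the basis element $(x_0,\dots,x_n)$ lies in $D(\SS(X))_n$ precisely when it is itself non-regular.

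Finally, I would conclude $D(X)=D(\SS(X))$ as follows. Both are $\KK$-submodules of $\Lambda_n(X)$ spanned by the same basis — namely the non-regular accessible sequences — so they agree in each degree. Both are honest subcomplexes (the former by definition, the latter by the standard fact about simplicial modules), so the equality promotes to an equality of chain complexes. Therefore
\[\RR(X) = \Lambda(X)/D(X) = C(\SS(X))/D(\SS(X)) = N(\SS(X)).\]

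The only real content is the identification of non-regularity with simplicial degeneracy, which is essentially immediate from the definition of $s_i$; the ``main obstacle'' is merely the small bookkeeping check that removing a repeated vertex from an accessible sequence yields an accessible sequence, which is handled by the triangle inequality \eqref{eq:Triangle_inequality}.
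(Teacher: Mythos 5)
Your proposal is correct and matches the paper, whose own proof of this proposition is simply ``Obvious''; you have just written out the routine unwinding of definitions that the authors suppress. The one point of substance you isolate --- that a basis element $(x_0,\dots,x_n)$ lies in $D(\SS(X))_n=\sum_i s_i(\KK(\SS(X))_{n-1})$ exactly when some $x_i=x_{i+1}$, so that $D(X)$ and $D(\SS(X))$ are spanned degreewise by the same basis vectors --- is exactly the right one, and your bookkeeping is accurate.
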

\begin{proof}
Obvious. 
\end{proof}

\subsection{Magnitude homology}
For a digraph $X$ we denote by $\MC^{\leq l}(X)$ a chain subcomplex of $\RR(X)$ generated by the images of accessible sequences $[x_0,\dots,x_n]$ with ${\sf L}(x_0,\dots,x_n)\leq l.$ We also assume that $\MC^{\leq -1}(X)=0.$ We define the magnitude complex and magnitude homology by the formulas  
\begin{equation}
\MC^l(X) = \MC^{\leq l}(X)/\MC^{\leq l-1}(X), \hspace{1cm} \MH^l_n(X) = H_n(\MC^l(X)).
\end{equation}
We call $\MC^l(X)$ the $l$-th magnitude chain complex of $X.$ For an accessible sequence $\sigma=(x_0,\dots,x_n)$ such that ${\sf L}(\sigma)\leq l$ we denote by $\llbracket \sigma\rrbracket$ its image in $\MC^l(X).$ Then 
\begin{equation}\label{eq:basis2}
\{ \llbracket \sigma \rrbracket \mid {\sf L}(\sigma)=l, \sigma \text{ is regular} \}
\end{equation}
is a basis of $\MC^l(X).$ 
Since any regular sequence $(x_0,\dots, x_n)$ has length at least $n,$ we obtain 
\begin{equation}\label{eq:magnitude_under_diagonal}
\MC^{\leq l}_n(X)=\MC^l_n(X)=\MH^l_n(X)=0 \text{ for } n>l.
\end{equation}
We also obtain that
\begin{equation}\label{eq:MH^l_l}
\MH^n_n(X) = {\sf Ker}( \partial : \MC^n_n(X) \to \MC^n_{n-1}(X)).
\end{equation}

\begin{proposition}
Our definition of $\MC^l(X)$ (up to a natural isomorphism) coincides with the definition given by Asao \cite[Def.2.13]{asao2023magnitude}. 
\end{proposition}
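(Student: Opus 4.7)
The plan is to unwind Asao's Definition 2.13 and match it, basis and differential, against the quotient construction $\MC^l(X)=\MC^{\leq l}(X)/\MC^{\leq l-1}(X)$ given here. Asao defines the $l$-th magnitude chain complex as the free $\KK$-module spanned by regular accessible sequences $(x_0,\dots,x_n)$ of length exactly $l$, with differential $\partial = \sum_{i=1}^{n-1}(-1)^i \partial_i$, where $\partial_i(x_0,\dots,x_n)=(x_0,\dots,\hat x_i,\dots,x_n)$ when $\dist(x_{i-1},x_{i+1})=\dist(x_{i-1},x_i)+\dist(x_i,x_{i+1})$, and $0$ otherwise (the boundary and non-smooth faces are suppressed because they lower the length).

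First I would identify the graded modules. By the basis \eqref{eq:basis2}, our $\MC^l_n(X)$ is a free $\KK$-module on the regular accessible sequences $\sigma=(x_0,\dots,x_n)$ with ${\sf L}(\sigma)=l$, which is exactly Asao's generating set. Sending $\llbracket\sigma\rrbracket$ to the corresponding Asao generator yields a natural isomorphism of graded modules.

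Next I would verify that the induced differential on $\MC^l(X)$ agrees with Asao's. Starting from $\partial_n=\sum_{i=0}^n (-1)^i d_i$ on $\RR(X)$, for a regular $\sigma$ with ${\sf L}(\sigma)=l$, I would analyse each face term separately. The key identity, using that $\dist(x_{i-1},x_i),\dist(x_i,x_{i+1})\geq 1$ for regular sequences, is
\[
{\sf L}(d_i\sigma) \;=\; {\sf L}(\sigma) \;-\; \dist(x_{i-1},x_i) \;-\; \dist(x_i,x_{i+1}) \;+\; \dist(x_{i-1},x_{i+1})
\]
for $0<i<n$, and ${\sf L}(d_0\sigma)={\sf L}(\sigma)-\dist(x_0,x_1)<l$, ${\sf L}(d_n\sigma)={\sf L}(\sigma)-\dist(x_{n-1},x_n)<l$ at the endpoints. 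Hence $d_0$ and $d_n$ always land in $\MC^{\leq l-1}(X)$ and vanish in the quotient. For a middle face, the triangle inequality \eqref{eq:Triangle_inequality} gives ${\sf L}(d_i\sigma)\leq l$, with equality precisely when $\dist(x_{i-1},x_{i+1})=\dist(x_{i-1},x_i)+\dist(x_i,x_{i+1})$; the strict case again lies in $\MC^{\leq l-1}(X)$ and dies in $\MC^l(X)$. The edge case $x_{i-1}=x_{i+1}$ forces $\dist(x_{i-1},x_{i+1})=0$, producing ${\sf L}(d_i\sigma)\leq l-2<l$ (consistent with the fact that $d_i\sigma$ is non-regular and hence already $0$ in $\RR(X)$).

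So after passing to the quotient, only the smooth middle faces contribute, and the induced differential reads $\sum_{i=1}^{n-1}(-1)^i \partial_i$ with $\partial_i$ defined exactly as in Asao. The isomorphism of graded modules then becomes an isomorphism of chain complexes, natural in $X$. The only mildly delicate point is the bookkeeping that both non-smooth middle faces and non-regular middle faces drop the length, so that nothing beyond Asao's prescription survives; once the length formula above is in hand, this is immediate.
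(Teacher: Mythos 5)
Your proof is correct and follows essentially the same route as the paper: identify the bases of $\MC^l_n(X)$ and of Asao's complex via \eqref{eq:basis2}, then check that the differential induced on the quotient $\MC^{\leq l}(X)/\MC^{\leq l-1}(X)$ agrees with Asao's. The only difference is presentational — the paper phrases Asao's differential as $\bar\partial(\sigma)=\partial(\sigma)^l$ and verifies the chain-map property in one line using the projection $(-)^l$, whereas you unwind it into the face-by-face length bookkeeping; both verifications carry the same content.
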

\begin{proof} Since the digraph $X$ is fixed, we will omit $X$ in the notation in the proof $\MC^l:=\MC^l(X), \RR_n:=\RR_n(X)$ and so on. Denote the complex defined by Asao by $\overline{\MC}^l.$ 
Asao defines $\overline{\MC}^l_n$ as an $\KK$-submodule of $\RR_n$ spanned by  regular sequences $\sigma$ such that ${\sf L}(\sigma)=l.$ Then 
\begin{equation}\label{eq:basis1}
\{ [\sigma]\mid {\sf L}(\sigma)=l, \sigma \text{ is regular} \}    
\end{equation}
is a basis of $\overline{\MC}^l.$
Note that the assumption $ {\sf L}( \sigma )\leq l$ implies that the sequence $\sigma$ is accessible. Asao uses a map $(-)^l:\RR_n\to \RR_n$ such that for any accessible sequence $\sigma$ we have $\sigma^l=\sigma,$ if ${\sf L}(\sigma)=l,$ and $\sigma^l=0,$ if ${\sf L}( \sigma ) \ne l.$
He defines the differential $\bar \partial_n : \overline{\MC}^l_n \to \overline{\MC}^l_{n-1}$ by the formula $\bar \partial(\sigma)=\partial(\sigma)^l,$ where $\partial$ is the differential of $R.$

 Consider the map 
$\theta_n^l : \MC^{\leq l}_n \to \overline{\MC}^l_n $
defined as the restriction of $(-)^l.$ Note that $\Ker(\theta_n^l)=\MC^{\leq l-1}_n.$ We claim that $\theta^l=(\theta^l_n)_n$ is a morphism of complexes. Indeed, for any $a\in \MC^{\leq l}_n$ we have $a-a^l \in \MC^{\leq l-1}_n,$ and hence, $\theta^l_n(a)=\theta^l_n(a^l).$  Therefore $\bar \partial_n \theta_n^l(a) = \partial(a^l)^l=\partial(a)^l=\theta_{n-1}^l\partial(a).$ It follows that $\theta^l:\MC^{\leq l} \to \overline{\MC}^l$ is a morphism of complexes. Since $\Ker(\theta^l)=\MC^{\leq l-1},$ we obtain that $\theta^l$ induces a morphism 
$\tilde \theta^l : \MC^l\to \overline{\MC}^l,$ which induces a bijection on the bases \eqref{eq:basis1}, \eqref{eq:basis2}. Therefore $\tilde \theta^l$ is an isomorphism of complexes. 
\end{proof}

\begin{lemma}[{cf. \cite[Lemma 42]{hepworth2017categorifying}}]\label{lemma:MC-Moore} There are natural isomorphisms
\begin{equation}
\MC^{\leq l}(X) \cong  N(\SS^l(X)), \hspace{1cm} \MC^l(X) \cong  \tilde N(\MM^l(X)).
\end{equation}
\end{lemma}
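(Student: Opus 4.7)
The plan is to leverage the identification $\RR(X)=N(\SS(X))$ established just before the lemma, together with exactness of the Moore-complex functor $N$ and the quotient formula $\tilde N(S/S')\cong N(S)/N(S')$ from \eqref{eq:homology_of_quotient}. Everything reduces to matching bases and checking that the natural inclusions and projections behave as expected.

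For the first isomorphism, I would start from the inclusion of simplicial sets $\SS^l(X)\hookrightarrow \SS(X)$. Applying $\KK(-)$ gives an inclusion of simplicial $\KK$-modules, and since $N$ is exact, the induced map $N(\SS^l(X))\to N(\SS(X))=\RR(X)$ is a monomorphism of chain complexes; I identify $N(\SS^l(X))$ with its image. It remains to show this image equals the subcomplex $\MC^{\leq l}(X)\subseteq \RR(X)$. Unwinding definitions, the degree $n$ part $N(\SS^l(X))_n$ has as a $\KK$-basis the non-degenerate $n$-simplices of $\SS^l(X)$, which by the description of degeneracies are precisely the \emph{regular} accessible sequences $(x_0,\dots,x_n)$ with ${\sf L}(x_0,\dots,x_n)\leq l$. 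On the other hand, $\MC^{\leq l}(X)_n$ is by definition spanned by classes $[x_0,\dots,x_n]$ with ${\sf L}\leq l$, and since $[\sigma]=0$ for non-regular $\sigma$ (by the previous proposition, as $D(X)=D(\SS(X))$), the same set of regular length-$\leq l$ sequences forms a basis. Both differentials are restrictions of $\partial$ on $\RR(X)$, so the two subcomplexes literally coincide.

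For the second isomorphism, I would treat $l\geq 1$ and $l=0$ separately. When $l\geq 1$, the definition gives $\MM^l(X)=\SS^l(X)/\SS^{l-1}(X)$ with $\SS^{l-1}(X)$ nonempty, so I can apply \eqref{eq:homology_of_quotient} directly:
\[\tilde N(\MM^l(X))\ \cong\ N(\SS^l(X))/N(\SS^{l-1}(X))\ \cong\ \MC^{\leq l}(X)/\MC^{\leq l-1}(X)\ =\ \MC^l(X),\]
where the second isomorphism is the first part of the lemma applied at levels $l$ and $l-1$. For $l=0$, the definition $\MM^0(X)=\SS^0(X)\sqcup *$ combined with the convention $\tilde N(S\sqcup *)=N(S)$ (which is the $S'=\emptyset$ case of \eqref{eq:homology_of_quotient}) yields $\tilde N(\MM^0(X))\cong N(\SS^0(X))\cong \MC^{\leq 0}(X)=\MC^0(X)$, the last equality using $\MC^{\leq -1}(X)=0$.

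The main obstacle is conceptual rather than computational: one must be careful to verify that the submodule $\MC^{\leq l}(X)$ of $\RR(X)$ defined by generators coincides on the nose with the image of $N(\SS^l(X))$, which needs the observation that non-regular accessible sequences already vanish in $\RR(X)$ and that non-degenerate simplices of $\SS^l(X)$ are exactly the regular accessible sequences of length $\leq l$. Once that basis identification is in hand, exactness of $N$ and the standard formula \eqref{eq:homology_of_quotient} do all the remaining work.
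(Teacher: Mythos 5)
Your proof is correct and follows essentially the same route as the paper's: the first isomorphism comes from the embedding $N(\SS^l(X))\hookrightarrow N(\SS(X))=\RR(X)$ whose image is identified with $\MC^{\leq l}(X)$, and the second follows from the quotient formula \eqref{eq:homology_of_quotient}. You merely spell out in more detail the basis identification (regular accessible sequences of length $\leq l$) and the $l=0$ edge case, both of which the paper leaves implicit.
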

\begin{proof}
The embedding $\SS^l(X) \hookrightarrow \SS(X) $ induces an embedding $N(\SS^l(X)) \hookrightarrow N(\SS(X))=\Lambda(X),$ whose image is obviously $\MC^{\leq l}(X).$ The first isomorphism follows. The second isomorphism follows from \eqref{eq:homology_of_quotient}.
\end{proof}

\begin{theorem}[{cf. \cite[Remark 45]{hepworth2017categorifying}}]
\label{the:magnitude-relative} The magnitude homology can be described as reduced homology of the magnitude space 
\begin{equation}
\MH^l_n(X)= \tilde H_n(\MM^l(X)).
\end{equation}
\end{theorem}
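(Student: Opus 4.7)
The statement is essentially a direct corollary of the previous lemma, so the plan is very short. My plan is to chase the definitions and invoke Lemma \ref{lemma:MC-Moore}.

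First I would recall the definition $\MH^l_n(X) := H_n(\MC^l(X))$ given in the magnitude homology subsection. Then I would apply the second isomorphism of Lemma \ref{lemma:MC-Moore}, namely $\MC^l(X) \cong \tilde N(\MM^l(X))$, which is itself deduced from \eqref{eq:homology_of_quotient} applied to the pair $\SS^{l-1}(X) \subseteq \SS^l(X)$ whose quotient is by definition $\MM^l(X)$ (with the convention for $l=0$ that $\MM^0(X) = \SS^0(X)\sqcup *$, which matches $\MC^{\leq -1}(X)=0$). Since $H_n$ is a functor, this gives
\begin{equation*}
\MH^l_n(X) = H_n(\MC^l(X)) \cong H_n(\tilde N(\MM^l(X))) = \tilde H_n(\MM^l(X)),
\end{equation*}
where the last equality is the definition of reduced homology of a pointed simplicial set.

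There is no real obstacle here since all the substantive work has already been done in Lemma \ref{lemma:MC-Moore} (identifying $\MC^{\leq l}$ with $N(\SS^l(X))$ and using the exactness of $N$ to pass to the quotient). The only mild subtlety worth mentioning is the $l=0$ case, where the convention $\MM^0(X) = \SS^0(X)\sqcup *$ ensures that $\tilde N(\MM^0(X)) = N(\SS^0(X))/N(*) \cong N(\SS^0(X)) = \MC^{\leq 0}(X) = \MC^0(X)$, consistent with $\MC^{\leq -1}(X)=0$.
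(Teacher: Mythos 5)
Your proposal is correct and follows essentially the same route as the paper, which also deduces the statement directly from Lemma \ref{lemma:MC-Moore} together with Equation \eqref{eq:homology_of_quotient}. The extra check of the $l=0$ convention is a reasonable addition but does not change the argument.
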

\begin{proof} It follows from 
Lemma \ref{lemma:MC-Moore} and Equation \eqref{eq:homology_of_quotient}. 
\end{proof}

\subsection{Path homology and GLMY-theory}
Denote by $\AA_n(X)$ a submodule of $\RR_n(X)$ generated by elements $[x_0,\dots,x_n],$ where $(x_0,\dots,x_n)$ is a path in $X.$ Then we denote by $\Omega(X)$ a maximal chain subcomplex of $\RR(X)$ such that its $n$-th component $\Omega_n(X)$ is a submodule in $\AA_n(X).$ Its components can be defined by the formula
$\Omega_n (X)=  \partial^{-1}(\AA_{n-1}(X))\cap \AA_n(X).$ Its homology is called path homology of $X$ or GLMY-homology of $X$
\begin{equation}
\PH_*(X)=H_*(\Omega(X)).
\end{equation}

\begin{proposition}
The chain complex $\Omega(X)$ is naturally isomorphic to the chain complex $\Omega^{\sf reg}(X)$ defined by Grigonyan, Lin, Muranov, Yau in \cite{grigor2012homologies}.
\end{proposition}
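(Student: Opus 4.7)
The plan is to construct an explicit, degreewise isomorphism between $\RR(X)$ and the ``regularized'' chain complex used in \cite{grigor2012homologies}, and then show that it restricts to the desired isomorphism between $\Omega(X)$ and $\Omega^{\sf reg}(X)$.

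First I would recall the definition from \cite{grigor2012homologies}: one takes $\RR^{\sf reg}_n(X)$ to be the free $\KK$-module with basis the regular sequences $(x_0,\dots,x_n)$, with differential $\partial^{\sf reg}$ obtained from the alternating-sum formula \eqref{eq:differential} by discarding those faces $(x_0,\dots,\hat x_i,\dots,x_n)$ in which a new repetition $x_{i-1}=x_{i+1}$ has been created. The module $\AA^{\sf reg}_n(X)$ is the submodule generated by regular paths, and $\Omega^{\sf reg}_n(X):=(\partial^{\sf reg})^{-1}(\AA^{\sf reg}_{n-1}(X))\cap \AA^{\sf reg}_n(X)$.

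Next I would define $\varphi_n:\RR_n(X)\to \RR^{\sf reg}_n(X)$ by $\varphi_n([x_0,\dots,x_n])=(x_0,\dots,x_n)$ whenever the sequence is regular and $\varphi_n([x_0,\dots,x_n])=0$ otherwise. This is well-defined because $D(X)$ is generated by the non-regular accessible sequences, and it is bijective on basis elements of $\RR_n(X)$ given by classes of regular accessible sequences. Commutation with differentials reduces to the observation that, when one applies the alternating-sum differential to a regular sequence, exactly those faces that are non-regular get killed in the quotient $\RR(X)=\Lambda(X)/D(X)$, which is precisely the rule defining $\partial^{\sf reg}$. Thus $\varphi$ is an isomorphism of chain complexes. (In fact, this is essentially the statement that $\RR(X)=N(\SS(X))$ coincides with the complex of non-degenerate simplices; cf.\ the preceding subsection.)

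It remains to verify that $\varphi$ sends $\AA_n(X)$ onto $\AA^{\sf reg}_n(X)$. A generator $[x_0,\dots,x_n]\in \AA_n(X)$ comes from a path, so $(x_i,x_{i+1})\in X_1=A(X)\cup \Delta(X_0)$; if the path is regular, each $(x_i,x_{i+1})$ lies in $A(X)$ and $\varphi$ sends the generator to a basis element of $\AA^{\sf reg}_n(X)$, while if it is non-regular, $\varphi$ sends it to $0\in \AA^{\sf reg}_n(X)$. Conversely, every regular path gives a generator of $\AA_n(X)$ mapped to itself, so $\varphi_n(\AA_n(X))=\AA^{\sf reg}_n(X)$, and, since $\varphi_n$ is an isomorphism identifying regular sequences with their classes, the restriction $\AA_n(X)\to \AA^{\sf reg}_n(X)$ is itself an isomorphism. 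Combined with the compatibility $\varphi\partial=\partial^{\sf reg}\varphi$, this immediately yields $\varphi(\Omega_n(X))=\Omega^{\sf reg}_n(X)$.

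The only mildly delicate point—and the one I would write out carefully—is the compatibility of the differentials, since one must check that ``a face becomes degenerate in $\SS(X)$'' is the same condition as ``a face becomes non-regular'' in the sense of \cite{grigor2012homologies}; everything else is bookkeeping. Naturality in $X$ is automatic from the construction of $\varphi$ via face-wise formulas.
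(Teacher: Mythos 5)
Your overall strategy is the same as the paper's: compare $\Omega(X)$ with $\Omega^{\sf reg}(X)$ via the map induced by viewing accessible sequences among all sequences of vertices, check compatibility with the differentials, and check that the map identifies $\AA_n(X)$ with the corresponding submodule on the other side. The paper phrases this as an embedding $\iota:\RR(X)\mono \RR'(X)$ induced by $\Lambda(X)\subseteq \Lambda'(X)$ and then cites an external remark for the last step, whereas you spell the last step out; that part of your write-up is fine and arguably more self-contained.

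The one genuine error is the assertion that $\varphi$ is an isomorphism of chain complexes $\RR(X)\to \RR^{\sf reg}(X)$. It is not: by your own recollection of the GLMY definition, $\RR^{\sf reg}_n(X)$ has basis \emph{all} regular sequences of vertices, including non-accessible ones (e.g.\ a pair $(x,y)$ with $\dist(x,y)=\dist(y,x)=\infty$), while $\RR_n(X)$ has basis only the regular \emph{accessible} sequences. So $\varphi$ is a monomorphism of chain complexes, not an isomorphism, and the sentence ``Thus $\varphi$ is an isomorphism of chain complexes'' should be deleted. Fortunately this misstatement is not load-bearing: the final step $\varphi(\Omega_n(X))=\Omega^{\sf reg}_n(X)$ only needs (i) $\varphi\partial=\partial^{\sf reg}\varphi$, (ii) $\varphi_n(\AA_n(X))=\AA^{\sf reg}_n(X)$, and (iii) injectivity of $\varphi_{n-1}$, to conclude that for $a\in\AA_n(X)$ one has $\partial a\in\AA_{n-1}(X)$ if and only if $\partial^{\sf reg}\varphi(a)\in\AA^{\sf reg}_{n-1}(X)$. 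All three of these you do establish, so the proof is correct once the isomorphism claim is downgraded to ``injective chain map restricting to an isomorphism $\AA_n(X)\cong\AA^{\sf reg}_n(X)$,'' which is exactly how the paper treats $\iota$.
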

\begin{proof}
The definition of Grigonyan, Lin, Muranov, Yau differs from our in that they consider all sequences of vertices while we consider only accessible sequences. 
They define a complex that we denote by  $\Lambda'(X),$ whose components $\Lambda'(X)_n$ are freely generated by all sequences of vertices $(x_0,\dots,x_n)$ with the same formula for the differential \eqref{eq:differential}. This complex also has a structure of a simplicial module and they consider the quotient $\RR'(X)=N(\Lambda'(X)).$ The embedding $\Lambda(X)\subseteq \Lambda'(X)$ induces an embedding $\iota: \RR(X) \mono \RR'(X)$. They consider submodules $A'_n(X)\subseteq \RR'(X)_n$ generated by the images of paths of $X,$ and define $\Omega^{\sf reg}(X)$ as the maximal subcomplex of $R'(X)$ such that $\Omega^{\sf reg}_n(X) \subseteq \AA'_n(X).$ It is easy to see that $\iota(\AA_n(X))=\AA'_n(X).$ Then the assertion follows from \cite[Remark 3.1]{ivanov2022simplicial}.
\end{proof}

\begin{remark}
We prefer to use the term ``path'' in the ordinary combinatorial meaning. We use the term ``GLMY-chains'' or ``path chains'' for homogeneous elements of $\Omega(X),$ and ``GLMY-cycles'' or ``path cycles'' for the cycles of this complex.  
\end{remark}

Asao proved that $\Omega_n(X)$ is isomorphic to $\MH^n_n(X).$ Here we provide a more homotopical version of Asao's result. 

\begin{proposition}[{sf. \cite[Lemma 6.8]{asao2023magnitude}}]\label{th:asao}
The chain complex $\Omega(X)$ is isomorphic to the chain complex
\begin{equation}\label{eq:complex_GLMY-hom-th}
 \dots \overset{\delta_3}\longrightarrow  \tilde H_2(\MM^2(X)) \overset{\delta_2}\longrightarrow \tilde H_1( \MM^1(X)) \overset{\delta_1}\longrightarrow  \tilde H_0( \MM^0(X)),   
\end{equation}
where $\delta_n:\tilde H_n(\MM^n(X)) \to \tilde  H_{n-1}(\MM^{n-1}(X))$ is the connecting homomorphism for the cofiber sequence $\MM^{n-1}(X)\hookrightarrow \SS^n(X)/\SS^{n-2}(X) \to \MM^n(X).$  
\end{proposition}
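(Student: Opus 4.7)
The plan is to produce a degree-wise isomorphism $\Omega_n(X)\cong \tilde H_n(\MM^n(X))$ and then verify that the boundary $\partial$ of $\Omega(X)$ agrees with the connecting homomorphism $\delta_n$ under this identification. Both steps are essentially diagram chases making use of Theorem~\ref{the:magnitude-relative}, equation~\eqref{eq:MH^l_l}, and Lemma~\ref{lemma:MC-Moore}.

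For the identification of the modules, first combine Theorem~\ref{the:magnitude-relative} with~\eqref{eq:MH^l_l} to obtain
\[\tilde H_n(\MM^n(X))=\MH^n_n(X)=\Ker\bigl(\partial:\MC^n_n(X)\to \MC^n_{n-1}(X)\bigr).\]
A regular accessible $n$-simplex $(x_0,\dots,x_n)$ satisfies ${\sf L}(x_0,\dots,x_n)\geq n$, with equality if and only if every $\dist(x_i,x_{i+1})=1$, i.e.\ if and only if it is a regular path. Hence $\MC^{\leq n-1}_n(X)=0$ and $\MC^n_n(X)=\MC^{\leq n}_n(X)=\AA_n(X)$; the same length argument gives $\MC^{\leq n-1}_{n-1}(X)=\AA_{n-1}(X)$, so $\MC^n_{n-1}(X)=\MC^{\leq n}_{n-1}(X)/\AA_{n-1}(X)$. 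The induced differential sends the class $\llbracket x_0,\dots,x_n\rrbracket$ to the image of $\sum_i(-1)^i[x_0,\dots,\hat x_i,\dots,x_n]$: the summands with $(x_{i-1},x_{i+1})\in X_1$ lie in $\AA_{n-1}(X)$ and vanish, while those with $\dist(x_{i-1},x_{i+1})=2$ survive. Thus $\partial\llbracket x_0,\dots,x_n\rrbracket=0$ precisely when $\partial[x_0,\dots,x_n]\in\AA_{n-1}(X)$, i.e.\ precisely when $[x_0,\dots,x_n]\in\Omega_n(X)$, yielding $\Omega_n(X)=\MH^n_n(X)$.

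To match differentials, I would apply $\tilde N$ to the cofiber sequence $\MM^{n-1}(X)\hookrightarrow \SS^n(X)/\SS^{n-2}(X)\to \MM^n(X)$ and invoke Lemma~\ref{lemma:MC-Moore} to get the short exact sequence of complexes
\[0\to \MC^{n-1}(X)\to \MC^{\leq n}(X)/\MC^{\leq n-2}(X)\to \MC^n(X)\to 0.\]
For $z\in \MH^n_n(X)$, view it as an element of $\MC^{\leq n}_n(X)/\MC^{\leq n-2}_n(X)=\MC^{\leq n}_n(X)$ (the quotient is trivial since $\MC^{\leq n-2}_n(X)=0$ by~\eqref{eq:magnitude_under_diagonal}). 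Applying the differential of $\RR(X)$ gives $\partial z\in \MC^{\leq n}_{n-1}(X)$, and the vanishing of the image in $\MC^n_{n-1}(X)$ places $\partial z$ inside $\MC^{\leq n-1}_{n-1}(X)=\AA_{n-1}(X)$. Finally, since $\MC^{n-1}_n(X)=0$ by~\eqref{eq:magnitude_under_diagonal}, there are no boundaries in $\MC^{n-1}_{n-1}(X)$ to quotient by, and so $\delta_n(z)$ is literally $\partial z\in \Omega_{n-1}(X)$, which is exactly the differential of $\Omega(X)$.

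The main obstacle is purely bookkeeping: aligning the two parallel descriptions of $\MC^n_{n-1}(X)$ so that the equivalence $\partial\llbracket\sigma\rrbracket=0 \Leftrightarrow \partial[\sigma]\in \AA_{n-1}(X)$ becomes transparent. Once this alignment is in place, both the module and differential identifications follow mechanically from the preceding lemmas.
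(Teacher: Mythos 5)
Your proposal is correct and follows essentially the same route as the paper: identify $\tilde H_n(\MM^n(X))$ with $\Omega_n(X)$ using the low-dimensionality of $\MM^n(X)$ (equivalently, the length bound ${\sf L}(\sigma)\geq n$ for regular $n$-simplices), then observe that both $\delta_n$ and the differential of $\Omega(X)$ are induced by the differential of $\RR(X)$, so they agree under this identification. The only difference is presentational — you phrase everything through $\MC^{\leq l}$ and $\MC^l$ via Lemma \ref{lemma:MC-Moore} and spell out the connecting-homomorphism chase that the paper leaves as ``one can check,'' which is a faithful filling-in rather than a new argument.
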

\begin{proof} For simplicity in the proof we omit $X$ in notations $\MM^n:=\MM^n(X)$, $\AA_n:=\AA_n(X)$ \dots.
Since ${\sf dim}(\MM^n)\leq n,$ we have  $\tilde N(\MM^n)_{n+1}=0$ and $\tilde N(\MM^n)_n \cong  N(\SS^n)_n=\AA_n.$ Therefore, 
\begin{equation}
\tilde H_n(\MM^n) = \Ker( \AA_n \to N(\SS^n)_{n-1} / \AA_{n-1}).
\end{equation}
It follows that $\tilde H_n(\MM^n)\cong \partial^{-1}(\AA_{n-1})\cap \AA_n=\Omega_n.$  The map $\delta_n$ is the connecting homomorphism for the short exact sequence of complexes
\begin{equation}
  0 \to \tilde N(\MM^{n-1}) \to \tilde N(\SS^n/\SS^{n-2}) \to \tilde N(\MM^n) \to 0,
\end{equation}
which is induced by the differential of $\tilde N(\SS^n/ \SS^{n-2}).$ The differential on $\tilde N(\SS^n/\SS^{n-2})$ $ = N(\SS^n)/N( \SS^{n-2})$ coincides with the differential induced by the differential on $\Lambda.$ So $\delta_n$ is induced by the differential on $\Lambda$ and the differential of $\Omega$ is also induced by the differential on $\Lambda.$ Using this one can check that $\Omega$ is isomorphic to the complex \eqref{eq:complex_GLMY-hom-th}. 
\end{proof}

The chain complex $\RR(X)$ has a structure of a semi-simplicial module defined by the maps 
\begin{equation}
d_i : \RR(X)_n \longrightarrow \RR(X)_{n-1}, \hspace{5mm} [x_0,\dots,x_n] \mapsto [x_0,\dots, \hat x_i,\dots,x_n].
\end{equation}
This structure can be used to give an alternative description of $\Omega(X)_n.$

\begin{proposition}\label{prop:description_of_PC} The following holds
\[\Omega_n(X)=\{ a \in \AA_n(X) \mid d_i(a)\in  \AA_{n-1}(X) \text{ for any } 1\leq i\leq n-1 \}.\]
\end{proposition}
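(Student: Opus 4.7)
The plan is to prove both inclusions separately. The inclusion $\supseteq$ is immediate from $\partial=\sum_{i=0}^{n}(-1)^id_i$ combined with the observation that truncating a path $(x_0,\dots,x_n)$ at either end again yields a path; hence $d_0(a),d_n(a)\in\AA_{n-1}(X)$ automatically for every $a\in\AA_n(X)$, and if additionally $d_i(a)\in\AA_{n-1}(X)$ for $1\le i\le n-1$, then $\partial(a)\in\AA_{n-1}(X)$ and $a\in\Omega_n(X)$.

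For the reverse inclusion I would fix $a=\sum_\sigma c_\sigma[\sigma]\in\AA_n(X)$, with $\sigma$ ranging over regular paths of length $n$, and expand each $d_i(a)$ in the natural basis of $\RR_{n-1}(X)$ consisting of regular accessible sequences. The key structural observation is this: for a single regular path $\sigma=(x_0,\dots,x_n)$ and $1\le i\le n-1$, every consecutive pair of $d_i\sigma=(x_0,\dots,\hat{x_i},\dots,x_n)$ is an arrow of $X$ except possibly the new pair $(x_{i-1},x_{i+1})$ at position $i-1$. If $x_{i-1}=x_{i+1}$ the resulting sequence is non-regular and $[d_i\sigma]=0$ in $\RR_{n-1}(X)$; otherwise $[d_i\sigma]$ is either a regular path or a regular accessible sequence whose unique non-arrow consecutive pair is located precisely at position $i-1$.

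For each $1\le i\le n-1$, let $\mathcal{B}_i\subseteq\RR_{n-1}(X)$ be the submodule spanned by those regular accessible sequences whose set of non-arrow positions equals $\{i-1\}$. Since each basis element of $\RR_{n-1}(X)$ has a well-defined set of non-arrow positions, the submodules $\AA_{n-1}(X),\mathcal{B}_1,\dots,\mathcal{B}_{n-1}$ are supported on pairwise disjoint basis subsets, so their sum in $\RR_{n-1}(X)$ is direct. The preceding observation yields, for each $1\le i\le n-1$, a decomposition $d_i(a)=p_i+b_i$ with $p_i\in\AA_{n-1}(X)$ and $b_i\in\mathcal{B}_i$. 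Writing
\[\partial(a)=d_0(a)+(-1)^n d_n(a)+\sum_{i=1}^{n-1}(-1)^i p_i+\sum_{i=1}^{n-1}(-1)^i b_i,\]
every term except the last sum lies in $\AA_{n-1}(X)$, while the last sum lies in $\bigoplus_{i=1}^{n-1}\mathcal{B}_i$. Consequently, the hypothesis $\partial(a)\in\AA_{n-1}(X)$ forces $\sum(-1)^i b_i=0$, and directness gives $b_i=0$ for each $i$, so $d_i(a)=p_i\in\AA_{n-1}(X)$, finishing the proof. The only mildly delicate step is the bookkeeping required to identify the disjoint support and establish the directness of $\AA_{n-1}(X)\oplus\bigoplus_i\mathcal{B}_i$, which is exactly what converts a single global condition on $\partial(a)$ into $n-1$ independent face-wise conditions.
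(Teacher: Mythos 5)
Your proof is correct and takes essentially the same route as the paper's: the paper decomposes $\RR_{n-1}(X)$ by the full vector of distances (the ``type'' $(k_1,\dots,k_{n-1})$), of which your grouping by the set of non-arrow positions is a harmless coarsening, and the rest of the argument is identical. In both cases the point is that $d_i$ of a regular path can only create a defect at position $i-1$, so the directness of $\AA_{n-1}(X)\oplus\bigoplus_i\mathcal{B}_i$ converts the single condition $\partial(a)\in\AA_{n-1}(X)$ into the $n-1$ facewise conditions.
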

\begin{proof} For simplicity, in the proof we will omit $X$ in the notation: $R=\RR(X), \Lambda=\Lambda(X), D=D(X)$ and so on.
For a regular  accessible sequence $(x_0,\dots,x_n)$ we define its type as the sequence of the distances
$(\dist(x_0,x_1),\dist(x_1,x_2),\dots, \dist( x_{n-1},x_n)).
$ For a sequence of positive natural numbers $(k_1,\dots,k_n)$ we denote by $\RR_{(k_1,\dots,k_n)}$ the submodule of $\RR_n$ freely generated by accessible sequences of type $(k_1,\dots,k_n).$ Then 
\begin{equation}
\RR_n = \bigoplus_{(k_1,\dots,k_n)\in \mathbf{N}_{>0}^n} \RR_{(k_1,\dots,k_n)},
\end{equation}
and 
$\AA_n = \RR_{\mathbf{1}},$ where $\mathbf{1}=(1,\dots,1).$ For any element $r\in \RR_n$ we denote by $r_{(k_1,\dots,k_n)}$ its component in $\RR_{(k_1,\dots,k_n)}.$ Then $r\in A_n$ if and only if $r =r_{\mathbf{1}}.$  

For $1\leq i\leq n-1$ we denote by $t_i$ the vector $(1,\dots,1,2,1,\dots,1) \in \mathbb{N}^{n-1}_{>0}$ where $2$ is on the $i$-th position. Then we have $d_i(\AA_n)\subseteq \AA_{n-1} \oplus \RR_{t_i}$ and $d_0(\AA_n),d_n(\AA_n)\subseteq \AA_{n-1}.$ Hence, for any $a\in \AA_n$ we obtain $d_i(a)=d_i(a)_{\mathbf{1}} + d_i(a)_{t_i}.$ Therefore, for any $a\in \AA_n$ we get  
\begin{equation}
\begin{split}
\partial_n(a)_{\mathbf{1}} &= d_0(a)+ \sum_{i=1}^{n-1} (-1)^i d_i(a)_{ \mathbf{1} } + (-1)^n d_n(a), \\
\partial_n(a)_{t_i} &= d_i(a)_{t_i}, \hspace{1cm} 1\leq i \leq n-1,
\end{split}
\end{equation}
and $\partial_n(a)_{(k_1,\dots,k_n)}=0$ for all $(k_1,\dots,k_n)\notin \{ \mathbf{1}, t_1,\dots,t_{n-1} \}.$
Therefore $\partial_n(a)\in \AA_n$ if and only if $d_i(a)_{t_i}=0$ for any $1\leq i\leq n.$ In other words, $\partial_n(a)\in \AA_n$ if and only if $d_i(a)=d_i(a)_{\mathbf{1}}$ for any $1\leq i\leq n.$ The assertion follows. 
\end{proof}

\subsection{Spectral sequence of a filtered simplicial set}

For a chain complex $C$ with an exhaustive filtration
\begin{equation}
    0=F_{-1}C \subseteq F_0C \subseteq  F_1C \subseteq \dots \subseteq C, \hspace{1cm} C=\bigcup_{i} F_iC
\end{equation}
there is a natural spectral sequence $E$ converging to $H_*(C)$ such that
\begin{equation}
E^0_{i,j} = F_jC_{i+j}/F_{j-1}C_{i+j}, \hspace{5mm}
E^1_{i,j} = H_{i+j}(F_jC/F_{j-1}C)
\end{equation}
and 
\begin{equation}
E^r_{i,j} = {\sf Im} ( H_{i+j}(F_iC/F_{i-r}C)\longrightarrow H_{i+j}( F_{i+r-1}C/F_{i-1}C)).
\end{equation}
for $r\geq 1$
(see \cite[Th. 5.5]{weibel1995introduction},  \cite[Ch.XV,\S 1,(8)]{cartan1999homological}, \cite[(4.13)]{ivanov2023nested}). For a simplicial set $\mathpzc{X}$ with an exhaustive filtration
\begin{equation}
\emptyset = \mathpzc{X}^{-1} \subseteq \mathpzc{X}^0 \subseteq \mathpzc{X}^1 \subseteq \dots \subseteq \mathpzc{X}, \hspace{1cm} \mathpzc{X}=\bigcup_{i} \mathpzc{X}^i
\end{equation}
we can consider the chain complex $C=N(\mathpzc{X})$ with an exhaustive filtration defined by the images of the embeddings $F_iC={\sf Im}(N(\mathpzc{X}^i) \mono N(\mathpzc{X})).$ Then the associated spectral sequence $E$ satisfies  the following 
\begin{equation}\label{eq:convergence}
E^\infty_{i,j} \Rightarrow H_{i+j}(\mathpzc{X}),
\hspace{5mm}
E^0_{i,j} = \tilde N(\mathpzc{X}^j/\mathpzc{X}^{j-1})_{i+j}, \hspace{5mm}
E^1_{i,j} = \tilde H_{i+j}(\mathpzc{X}^j/\mathpzc{X}^{j-1})
\end{equation}
and
\begin{equation}\label{eq:e^r}
E^r_{i,j} = {\sf Im} ( \tilde H_{i+j}(\mathpzc{X}^i/\mathpzc{X}^{i-r} )\longrightarrow \tilde H_{i+j}( \mathpzc{X}^{i+r-1}/\mathpzc{X}^{i-1})).
\end{equation}
for $r\geq 1.$ This spectral sequence will be called the spectral sequence of the filtered simplicial set $\mathpzc{X}.$ Note that it is natural by the filtered space $\mathpzc{X}.$

\subsection{Magnitude-path spectral sequence}

Here we present a new approach to the spectral sequence constructed by Asao in \cite[Prop. 7.8]{asao2023magnitude} (see also \cite[Remark 46]{hepworth2017categorifying}, \cite[Prop 4.34.]{asao2023magnitude2}). We construct it as the spectral sequence of a filtered simplicial set. 

\begin{proposition}[{cf. \cite[Prop. 7.8]{asao2023magnitude}, \cite[Remark 46]{hepworth2017categorifying}, \cite[Prop 4.34.]{asao2023magnitude2}}] 
Let $X$ be a digraph and
$E(X)$ be the spectral
sequence of the filtered nerve $\SS^* (X).$ 
Then 
\begin{enumerate}
\item $E^r_{i,j}(X) \Rightarrow H_{i+j}(\Pre(X));$
\item $E^0_{i,j}(X)=\MC^{j}_{i+j}(X),$  $ E^1_{i,j}(X) = \MH^{j}_{i+j}(X);$
\item  $E^1_{*,0}(X)\cong \Omega(X)$ and $  E^2_{n,0}(X)=\PH_n(X).$
\end{enumerate}
\end{proposition}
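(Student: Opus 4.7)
The plan is to apply the general formulas \eqref{eq:convergence}--\eqref{eq:e^r} for the spectral sequence of an exhaustively filtered simplicial set directly to the filtration $\SS^*(X)$, and then translate each ingredient through the identifications already established earlier in this section. No new analysis is really needed: everything is bookkeeping relative to Proposition \ref{prop:nrv}, Lemma \ref{lemma:MC-Moore}, Theorem \ref{the:magnitude-relative}, and Proposition \ref{th:asao}.

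For item (1), the filtration is exhaustive with $\SS(X) = \bigcup_{l \geq 0} \SS^l(X)$ and bounded below by $\SS^{-1}(X) = \emptyset$, so \eqref{eq:convergence} gives convergence of $E(X)$ to $H_{i+j}(\SS(X))$; by Proposition \ref{prop:nrv} the latter coincides with $H_{i+j}(\Pre(X))$. For item (2), the quotient $\SS^j(X)/\SS^{j-1}(X)$ is by definition $\MM^j(X)$, so \eqref{eq:convergence} together with Lemma \ref{lemma:MC-Moore} yields $E^0_{i,j}(X) = \tilde N(\MM^j(X))_{i+j} = \MC^j_{i+j}(X)$, and Theorem \ref{the:magnitude-relative} yields $E^1_{i,j}(X) = \tilde H_{i+j}(\MM^j(X)) = \MH^j_{i+j}(X)$.

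For item (3), specializing (2) to $j = 0$ gives $E^1_{n,0}(X) = \tilde H_n(\MM^n(X))$, which is precisely the $n$-th term of the complex \eqref{eq:complex_GLMY-hom-th} of Proposition \ref{th:asao}. It then remains only to match the differentials. Unwinding \eqref{eq:e^r} in the case $r = 1$, $j = 0$, the map $d^1 : E^1_{n,0}(X) \to E^1_{n-1,0}(X)$ is the connecting homomorphism of the short exact sequence of normalized complexes
\[0 \longrightarrow \tilde N(\MM^{n-1}(X)) \longrightarrow \tilde N(\SS^n(X)/\SS^{n-2}(X)) \longrightarrow \tilde N(\MM^n(X)) \longrightarrow 0,\]
which is exactly $\delta_n$ from Proposition \ref{th:asao}. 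Hence $E^1_{*,0}(X) \cong \Omega(X)$ as chain complexes, and taking homology yields $E^2_{n,0}(X) = H_n(\Omega(X)) = \PH_n(X)$. The only mild obstacle is the verification that $d^1$ coincides with $\delta_n$, but this is simply the standard identification of the $d^1$-differential of a filtered-complex spectral sequence with the connecting homomorphism attached to the triple $(\SS^{n-2}(X), \SS^{n-1}(X), \SS^n(X))$, so no genuine difficulty is expected.
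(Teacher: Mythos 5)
Your proposal is correct and is essentially the paper's own proof: the paper's argument is a one-line citation of Proposition \ref{prop:nrv}, Theorem \ref{the:magnitude-relative}, and Proposition \ref{th:asao}, and your write-up simply spells out the same bookkeeping, including the standard identification of $d^1$ with the connecting homomorphism of the triple $(\SS^{n-2}(X),\SS^{n-1}(X),\SS^n(X))$, which is exactly the map $\delta_n$ of Proposition \ref{th:asao}. The only caveat (inherited from the paper itself, so not a gap on your part) is that \eqref{eq:convergence} indexes the filtration by the second index $j$ while \eqref{eq:e^r} indexes it by the first index $i$, so when you ``specialize (2) to $j=0$'' to get $E^1_{n,0}=\tilde H_n(\MM^n(X))$ you are tacitly using the latter convention; it would be worth fixing one convention throughout.
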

\begin{proof}
It follows from the isomorphism  ${\sf Nrv}({\sf Pre}(X))\cong  \SS(X)$ (Proposition \ref{prop:nrv}), equality $\MC^l_n(X)=\tilde H_n(\MM^l(X))$  
(Theorem \ref{the:magnitude-relative}), and Proposition \ref{th:asao}.
\end{proof}

\begin{proposition}\label{prop:filtered_colimits} The magnitude-path spectral sequence commutes with filtered colimits of digraphs
\[E^r_{*,*}( {\sf colim}(X^\alpha) ) \cong {\sf colim} (E^r_{*,*}(X^\alpha)). \]
In particular, magnitude and path homology commute with filtered colimits of digraphs.
\end{proposition}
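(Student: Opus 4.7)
The plan is to reduce the statement to the assertion that each ingredient in the construction of $E^r_{*,*}(X)$ commutes with filtered colimits. The heart of the matter is to show that the filtered nerve construction $X\mapsto \SS^*(X)$, viewed as a functor from digraphs to filtered simplicial sets, preserves filtered colimits. Once this is done, the rest is formal: the normalised complex functor $N\circ \KK(-)$ preserves all colimits, and filtered colimits of $\KK$-modules are exact, so they commute with the kernels and images used to form the pages of the spectral sequence of a filtered chain complex (cf. the explicit formulas for $E^r_{i,j}$ in \eqref{eq:e^r}).

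First I would verify the statement at the level of distances. For a filtered diagram $\{X^\alpha\}$ with colimit $X$, we have $X_0={\sf colim}_\alpha X_0^\alpha$ and $X_1={\sf colim}_\alpha X_1^\alpha$; any path in $X$ uses only finitely many vertices and arrows, so by filteredness it lifts to some $X^\alpha$. Combined with the fact that morphisms of digraphs decrease distances (inequality \eqref{eq:dist_of_images}), this yields
\[\dist_X(x,x') = \min_\alpha \dist_{X^\alpha}(x,x'),\]
with the minimum attained for all sufficiently large $\alpha$ containing $x,x'$. I would then promote this to the length filtration: given an accessible sequence $\sigma=(x_0,\dots,x_n)$ in $X$ with ${\sf L}(\sigma)\leq l$, filteredness lets me choose a single stage $\alpha$ containing all $x_i$ together with minimising paths realising each $\dist_X(x_i,x_{i+1})$, so $\sigma$ lifts to an element of $\SS^l(X^\alpha)_n$. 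The converse direction is immediate from monotonicity of ${\sf L}$ under digraph morphisms, and injectivity of the canonical map ${\sf colim}_\alpha \SS^l(X^\alpha)\to \SS^l(X)$ is the standard fact that filtered colimits in $\mathsf{Set}$ identify elements only when they become equal at a common stage. This gives $\SS^l(X)\cong {\sf colim}_\alpha \SS^l(X^\alpha)$, naturally in $l$, i.e.\ an isomorphism of filtered simplicial sets.

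Applying $N$ (which preserves colimits) and passing to the pages $E^r_{i,j}$ as subquotients of the filtered chain complex $N(\SS^*(X))$, the exactness of filtered colimits of $\KK$-modules yields $E^r_{*,*}({\sf colim}_\alpha X^\alpha)\cong {\sf colim}_\alpha E^r_{*,*}(X^\alpha)$ for every $r$. The ``in particular'' for magnitude and path homology then follows from parts (2) and (3) of the previous proposition. The only step requiring real care is the compatibility of the length filtration with filtered colimits; this rests on filteredness being strong enough to gather the finite data (the vertices $x_i$ together with one minimising path between each consecutive pair) into a single stage of the diagram, which is the main obstacle but is a standard filtered colimit argument.
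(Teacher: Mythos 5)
Your proposal is correct and follows essentially the same route as the paper: both reduce the statement to the isomorphism of filtered simplicial sets $\SS^*({\sf colim}\,X^\alpha)\cong {\sf colim}\,\SS^*(X^\alpha)$, proved by lifting the finite data of an accessible sequence (vertices plus minimising paths) to a single stage of the diagram, and then conclude formally from the subquotient description of the $E^r$ pages together with exactness of filtered colimits. The only cosmetic difference is that the paper first establishes $\SS(X)\cong{\sf colim}\,\SS(X^\alpha)$ via the preorder ${\sf Pre}(X)$ and compactness of $[n]$ in ${\sf Cat}$ before refining to the filtration, whereas you argue directly with distances and the length function; this changes nothing of substance.
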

\begin{proof}
Let $X^\alpha$ be a filtered diagram of digraphs (a functor from a filtered category) and $X={\sf colim}(X^\alpha)$. We denote by $\iota^\alpha:X^\alpha \to X$ the canonical maps. It is easy to see that $X_0={\sf colim}(X^\alpha_0)$ and $X_1={\sf colim}(X^\alpha_1).$ Therefore, any path of length $n$ in $X$ can be lifted to a path of length $n$ in $X^\alpha$ for some $\alpha.$ Hence ${\sf Pre}(X)\cong {\sf colim}\: {\sf Pre}(X^\alpha).$

Any finite category is a compact object in a category of small categories. In particular, the poset $[n]=\{0<1<\dots<n\}$ treated as a category is a compact object in the category of small 
categories. Since ${\sf Nrv}_n(\CCC) = {\sf Cat}([n],\CCC),$ 
we obtain that the functor of nerve ${\sf Nrv}:{\sf Cat} \to {\sf sSet}$ commutes with filtered colimits. Hence  ${\sf Pre}(X)\cong {\sf colim}\: {\sf Pre}(X^\alpha)$ implies  $\SS(X)\cong {\sf colim}\: \SS(X^\alpha).$

The simplicial set ${\sf colim}\: \SS(X^\alpha)$ has a  filtration defined by the images ${\sf colim}\: \SS^\ell(X^\alpha) \to {\sf colim}\: \SS(X^\alpha).$ Since any path of length $n$ in $X$ can be lifted to a path of length $n$ in $X^\alpha$ for some $\alpha,$ we obtain that any simplex from $\SS^\ell(X)$ can be lifted to a simplex of $\SS^\ell(X^\alpha)$ for some $\alpha.$ Therefore we obtain an isomorphism of filtered simplicial sets $\SS^*(X)\cong {\sf colim}\: \SS^*(X^\alpha).$ Using that filtered colimits commute with pushouts, we obtain 
\begin{equation}
\SS^\ell(X)/\SS^{\ell'}(X) \cong {\sf colim}\: \SS^\ell(X^\alpha)/\SS^{\ell'}(X^\alpha)  
\end{equation}
for any $\ell\geq \ell'.$ Then the assertion follows from the formula   \eqref{eq:e^r} and the fact that the homology of a simplicial set commute with filtered colimits. 
\end{proof}

\begin{corollary}\label{cor:colimit}
If $X$ is a digraph and there is an increasing  sequence of subdigraphs
$ X^1 \subseteq X^2 \subseteq \dots \subseteq X, $ such that $ X=\bigcup_{n\geq 0} X^n,$
then $\PH_*(X)={\sf colim}\: \PH_*(X^n).$
\end{corollary}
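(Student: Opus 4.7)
The plan is to derive this corollary directly from Proposition \ref{prop:filtered_colimits}, which already asserts that path homology commutes with filtered colimits of digraphs. The only substantive step is to recognize the ascending union $X=\bigcup_n X^n$ as a filtered colimit in the category of digraphs.

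First I would view the chain $X^1\subseteq X^2\subseteq \dots$ as a functor from the filtered poset $\mathbb{N}$ (with its usual order) to the category of digraphs. Let $Y:={\sf colim}_n X^n$ computed in digraphs. Since colimits of digraphs are built from the colimits of vertex sets and arrow sets, we have $Y_0={\sf colim}_n (X^n)_0 = \bigcup_n (X^n)_0 = X_0$ and similarly $A(Y)=\bigcup_n A(X^n)=A(X)$, and the universal map $Y\to X$ is then a bijection on vertices preserving arrows in both directions, i.e.\ an isomorphism of digraphs. Hence $X\cong {\sf colim}_n X^n$ as a digraph.

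Applying Proposition \ref{prop:filtered_colimits} to this filtered diagram yields
\[
\PH_*(X)=\PH_*({\sf colim}_n X^n)\cong {\sf colim}_n \PH_*(X^n),
\]
which is exactly the claim. The only potential subtlety is to make sure that the inclusion maps $X^n\hookrightarrow X$ induce the canonical maps into the colimit on path homology, but this is immediate from the naturality of the isomorphism in Proposition \ref{prop:filtered_colimits} with respect to the filtered diagram. There is no real obstacle here; the work has all been done in Proposition \ref{prop:filtered_colimits}.
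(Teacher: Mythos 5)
Your proposal is correct and matches the paper's intent exactly: the corollary is stated immediately after Proposition \ref{prop:filtered_colimits} with no separate proof, precisely because the increasing union is a filtered colimit of digraphs and the proposition applies verbatim. Your explicit verification that $X\cong{\sf colim}_n X^n$ (via the unions of vertex and arrow sets) and the remark on naturality fill in the only details the paper leaves implicit.
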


\section{\bf Fundamental groupoids of digraphs}

\subsection{Category defined by a quiver with relations}

When we work with small categories and groupoids, we prefer to write composition from the left to the right. In this case we use the dot 
 \begin{equation}
 \begin{tikzcd}
    & \bullet \ar[rd,"g"] &\\
   \bullet \ar[ru,"f"] \ar[rr,"{f\cdot g = g f}"'] & & \bullet
 \end{tikzcd}
 \end{equation}

By a quiver we mean a $5$-tuple $Q=(Q_0,Q_1,d_0,d_1,s_0),$ where $Q_0$ and $Q_1$ are sets called ``the set of vertices'' and  ``the set of arrows'', $d_0,d_1:Q_1\to Q_0$ are maps called  ``head'' and  ``tail'' of an arrow, and $s_0:Q_0\to Q_1$ is a map called ``degeneracy map'', which satisfy
$d_0  s_0=d_1  s_0={\sf id}_{Q_0}.$
Arrows from $s_0(v)$ are called degenerate  loops, all other arrows are called non-degenerate arrows. 
Since we think about quivers as about ``categories without composition'' we will also use the notation 
\begin{equation}
    1_v = s_0(v), \hspace{5mm} d_0(\alpha)={\sf cod}(\alpha), \hspace{5mm} d_1(\alpha)={\sf dom}(\alpha).
\end{equation}
A quiver can be equivalently defined as a 1-truncated simplicial set. A morphism of quivers $f:Q\to Q'$ is a couple of maps $f_0:Q_0\to Q'_0$ and $f_1:Q_1\to Q'_1$ such that $d_if_1 =f_0d_i$ and $s_0f_0=f_1s_0.$ 

We define a path in a quiver $Q$ from a vertex $v$ to a vertex $u$ is a sequence of arrows $(\alpha_1,\dots,\alpha_n)$ such that $d_1(\alpha_1)=v, d_0(\alpha_i)=d_1(\alpha_{i+1}), d_0(\alpha_n)=u.$ We consider an equivalence relation on the set of paths, which is the minimal equivalence relation such that $(\alpha_1,\dots,\alpha_n) \  \sim \  (\alpha_1, \dots, \alpha_i, 1_{v_i}, \alpha_{i+1}, \dots, \alpha_n).
$ 
A class of paths containing  $(\alpha_1,\dots,\alpha_n)$ 
will be denoted by 
$\alpha_1\cdot {\dots} \cdot \alpha_n.$ 
Any such a class can be rewritten either as $1_v$ for some $v,$ or as 
$\alpha_1\cdot {\dots} \cdot \alpha_n,$ 
where $\alpha_i$'s are  non-degenerate arrows. The free category $F^{\sf cat}(Q)$ is the category whose set of objects is $Q_0$ and morphisms from $v$ to $u$ are equivalence classes of paths. The composition is defined by the concatenation, the identity morphism is defined as $1_v=s(v).$ This category satisfies the obvious universal property. One can say that $F^{\sf cat}$ is the left adjoint functor to the forgetful functor $U:{\sf Cat}\to {\sf Quiv}.$

Assume that $\CCC$ is a small category and $\RRR\subseteq {\sf Mor}(\CCC)^2$ is a set of parallel pairs of morphisms i.e. for any $(r,r')\in \RRR$ we have ${\sf dom}(r)={\sf dom}(r')$ and ${\sf cod}(r)={\sf cod}(r')$. Then we denote by $\CCC/\RRR$ the quotient category $\CCC/\!\sim,$ where $\sim$ is the minimal congruence relation such that $r\sim r'$ for any $(r,r')\in \RRR.$ The quotient category satisfies the following universal property. For any functor $\tilde \Phi:\CCC \to \mathpzc{D}$ such that $\tilde \Phi(r)=\tilde \Psi(r')$ for any $(r,r')\in \RRR,$ there exists a unique functor $\Phi:\CCC/\RRR \to \mathpzc{D}$ such that $\Phi \pi = \tilde \Phi,$ where $\pi:\CCC\to \CCC/\RRR$ is the canonical projection.

If $\CCC=F^{\sf cat}(Q),$ the quotient category  
\begin{equation}
\langle Q\mid \RRR \rangle_{\sf cat} := F^{\sf cat}(Q)/\RRR. 
\end{equation}
is called the category defined by the quiver $Q$ with relations $\RRR.$

\subsection{Groupoid defined by a quiver with relations}

 The free groupoid $F(Q)$ generated by a quiver $Q$ is defined as the localization of the free category by all morphisms
\begin{equation}
F(Q) = F^{\sf cat}(Q) [{\sf Mor}(F^{\sf cat}(Q))^{-1}]. 
\end{equation}
It is easy to see that free groupoid satisfies the obvious universal property.

Equivalently the free groupoid can be defined as follows. Consider the quiver $Q^{\pm}$ such that $Q^{\pm}_0=Q_0$ and $Q^{\pm}_1$ consists of elements of two types: $1_v$ for $v\in Q_0;$ formal elements $\alpha^\varepsilon,$ where $\alpha\in Q_1\setminus s_0(Q_0)$ and $\varepsilon\in\{-1,1\}.$ The maps $d_0,d_1,s_0$ are defined by $s_0(v)=1_v,$ $d_k(1_v)=v,$ $d_k(\alpha^1)=d_k(\alpha)$ and $d_k(\alpha^{-1})=d_{1-k}(\alpha).$ Then it is easy to see that
\begin{equation}
F(Q) \cong \langle Q^{\pm}\mid \alpha^\varepsilon \cdot \alpha^{-\varepsilon} = 1_{d_0(\alpha^\varepsilon)} \rangle_{\sf cat}.
\end{equation}
If $X$ is a digraph, we can consider the associated quiver $Q(X),$ where $Q(X)_0=X_0,$ $Q(X)_1=X_1,$ $s_0(x)=(x,x),$ $d_0(x_0,x_1)=x_1,$ $d_1(x_0,x_1)=x_0$ and set $F(X)=F(Q(X)).$

If $\RRR\subseteq {\sf Mor}(F(Q))^2$ is a set of parallel pairs of morphisms, we set 
$\langle Q\mid \RRR \rangle = F(Q)/\RRR.$
The groupoid $\langle Q\mid \RRR \rangle$ is called the groupoid defined by the quiver $Q$ with relations $\RRR.$ If $X$ is a digraph, we also set 
\begin{equation}
\langle X\mid \RRR \rangle = F(X)/\RRR, 
\end{equation}
and call this groupoid a groupoid defined by a digraph with relations.

\subsection{Fundamental groupoid of a simplicial set}

For a simplicial set $\mathpzc{X}$ the fundamental  groupoid $\Pi(\mathpzc{X})$ is the groupoid defined by the quiver 
\begin{equation}
 Q(\mathpzc{X})=(\mathpzc{X}_0,\mathpzc{X}_1,d_0,d_1,s_0)   
\end{equation}
and relations 
\begin{equation}\label{relations:grouppoid}
d_2(z)\cdot  d_0(z)= d_1(z) 
\end{equation}
for any $z\in \mathpzc{X}_2.$ Note that by the definition of a free groupoid we also have 
$s_0(x)=1_x$ for any $x\in \mathpzc{X}_0.$

\subsection{\textbf{\textit{l}}-fundamental groupoid of a digraph} 

In this subsection we will prove that for a digraph $X$ the fundamental groupoid of $\SS^l(X)$ is the groupoid defined by the digraph $X$, where for any two vertices $x,x'$ all paths from $x$ to $x'$ of lengths $\leq l$ are identified. We will call $\Pi(\SS^l(X))$ the $l$-fundamental groupoid of $X$ and denote it by
\[ \Pi^l(X):=\Pi(\SS^l(X)). \]

\begin{lemma}\label{lemma:an_equation_in_pi^l}
For any path $(x_0,\dots,x_n)$ in $X$ such that $0\leq n\leq l$ we have the following equation in $\Pi^l(X)$
\begin{equation}
(x_0,x_1)\cdot (x_1,x_2) \cdot {\dots} \cdot (x_{n-1},x_n) = (x_0,x_n).
\end{equation}
\end{lemma}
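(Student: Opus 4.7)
The plan is to induct on $n$, using the defining 2-simplex relations \eqref{relations:grouppoid} of $\Pi^l(X) = \Pi(\SS^l(X))$.

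First I would confirm that both sides are morphisms in $\Pi^l(X)$. Each edge $(x_{i-1}, x_i)$ is an arrow of $X$, hence a 1-simplex of $\SS^l(X)$ of length $1 \leq l$. On the right, $(x_0, x_n)$ is the accessible pair viewed as a 1-simplex: it lies in $\SS^l(X)_1$ because $\mathsf{L}(x_0, x_n) = \dist(x_0, x_n) \leq n \leq l$, the bound coming from the given $n$-path.

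The base cases are straightforward: for $n = 0$ the empty product equals $1_{x_0} = s_0(x_0) = (x_0, x_0)$ by the standard convention that degenerate 1-simplices are the identities in the fundamental groupoid of a simplicial set, and $n = 1$ is tautological. For the inductive step $n \geq 2$, I would consider the triple $\sigma = (x_0, x_1, x_n)$. It is accessible because $(x_0, x_1) \in X_1$ and $\dist(x_1, x_n) \leq n - 1 < \infty$, and
\[\mathsf{L}(\sigma) = 1 + \dist(x_1, x_n) \leq 1 + (n-1) = n \leq l,\]
so $\sigma \in \SS^l(X)_2$. Applying \eqref{relations:grouppoid} to $\sigma$ yields
\[(x_0, x_1) \cdot (x_1, x_n) = (x_0, x_n)\]
in $\Pi^l(X)$. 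Then the induction hypothesis applied to the sub-path $(x_1, \ldots, x_n)$, which has length $n - 1 \leq l$, gives $(x_1, x_2) \cdot \ldots \cdot (x_{n-1}, x_n) = (x_1, x_n)$; substituting completes the inductive step.

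I do not foresee any genuine obstacle. The only bookkeeping is checking that the auxiliary 1- and 2-simplices really live in $\SS^l(X)$, and this is immediate from the triangle inequality applied to the subpaths of the given $n$-path.
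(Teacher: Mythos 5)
Your proof is correct and follows essentially the same route as the paper's: induction on $n$ using the defining $2$-simplex relation \eqref{relations:grouppoid}, the only (immaterial) difference being that you peel off the first edge via the simplex $(x_0,x_1,x_n)$ while the paper peels off the last via $(x_0,x_{n-1},x_n)$. Your extra bookkeeping verifying that the auxiliary simplices lie in $\SS^l(X)$ is accurate and slightly more explicit than the paper's.
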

\begin{proof}
This can be proved by induction on $n.$ For $n=0,1$ it is obvious. The inductive step follows from the computation
\begin{equation}
\begin{split}
(x_0,x_1)\cdot (x_1,x_2) \cdot {\dots} \cdot (x_{n-1},x_n) &= (x_0,x_{n-1})\cdot (x_{n-1},x_n) \\
& =d_2(x_0,x_{n-1},x_n)\cdot d_0(x_0,x_{n-1},x_n)\\
& = d_1(x_0,x_{n-1},x_n) = (x_0,x_n).
\end{split}
\end{equation}
\end{proof}

\begin{proposition}\label{prop:pi^l} For any digraph $X$
the $l$-fundamental groupoid  $\Pi^l(X)$ is a groupoid naturally isomorphic to a groupoid defined by the digraph $X$ with relations 
\begin{equation}\label{eq:rel_l}
(x_0,x_1)\cdot (x_1,x_2)\cdot {\dots} \cdot (x_{n-1},x_n) = (y_0,y_1)\cdot (y_1,y_2)\cdot {\dots} \cdot (y_{m-1},y_m)     
\end{equation}
where $x_0=y_0,$ $x_n=y_m,$ $(x_i,x_{i+1}),(y_j,y_{j+1})\in  X_1$ and $0\leq n,m\leq l.$ 
\end{proposition}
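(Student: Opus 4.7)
The plan is to construct mutually inverse functors between the two groupoids, using the explicit presentation of $\Pi(\SS^l(X))$ recalled above. Unpacking the definition, $\Pi^l(X)$ is the groupoid on object set $X_0$ generated by the 1-simplices $\SS^l(X)_1 = \{(x,x') : \dist(x,x') \leq l\}$, subject to one relation $(x_0,x_1) \cdot (x_1,x_2) = (x_0,x_2)$ for every 2-simplex $(x_0,x_1,x_2) \in \SS^l(X)_2$, i.e., whenever $\dist(x_0,x_1)+\dist(x_1,x_2) \leq l$. Write $G := \langle X \mid \RRR \rangle$ for the target groupoid.

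Going one way, I would define $F : G \to \Pi^l(X)$ by sending each generator $(x,x') \in X_1$ to the corresponding 1-simplex $(x,x') \in \SS^l(X)_1$ (legitimate since $\dist(x,x') \leq 1 \leq l$). The relations \eqref{eq:rel_l} hold in $\Pi^l(X)$ because, by Lemma \ref{lemma:an_equation_in_pi^l}, both sides of \eqref{eq:rel_l} collapse in $\Pi^l(X)$ to the single arrows $(x_0,x_n)$ and $(y_0,y_m)$, which coincide by hypothesis. Going the other way, for each generator $(x,x') \in \SS^l(X)_1$ I would fix a path $(x=v_0,v_1,\dots,v_k=x')$ in $X$ of length $k = \dist(x,x') \leq l$ and set $G'(x,x') := (v_0,v_1) \cdot (v_1,v_2) \cdots (v_{k-1},v_k)$ in $G$; this is independent of the choice of path by \eqref{eq:rel_l}, and the degenerate case $x=x'$ is covered by taking $k=0$. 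To verify the relation coming from a 2-simplex $(x_0,x_1,x_2) \in \SS^l(X)_2$, concatenating the chosen paths for $G'(x_0,x_1)$ and $G'(x_1,x_2)$ produces a path in $X$ from $x_0$ to $x_2$ of total length $\dist(x_0,x_1)+\dist(x_1,x_2) \leq l$, which by \eqref{eq:rel_l} equals $G'(x_0,x_2)$ in $G$.

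Finally, I would check that $F$ and $G'$ are mutually inverse on generators. The composite $G' \circ F$ sends $(x,x') \in X_1$ along a length-$\leq 1$ path in $X$, which is trivially $(x,x')$ itself (or $1_x$ when $x=x'$). Conversely, $F \circ G'$ sends $(x,x') \in \SS^l(X)_1$ to $(v_0,v_1)\cdots(v_{k-1},v_k)$ in $\Pi^l(X)$, which collapses back to $(x,x')$ by Lemma \ref{lemma:an_equation_in_pi^l}. Naturality in $X$ is immediate from the functoriality of each construction. I expect the only real subtlety to be the well-definedness of $G'$ on generators: this is exactly where the relations \eqref{eq:rel_l} play their essential role, being just strong enough to render any two paths of length $\leq l$ between the same endpoints interchangeable, so no choice-dependence arises.
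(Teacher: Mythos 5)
Your proposal is correct and follows essentially the same route as the paper: both directions are handled by constructing mutually inverse functors, using Lemma \ref{lemma:an_equation_in_pi^l} to see that the relations \eqref{eq:rel_l} hold in $\Pi^l(X)$, and using \eqref{eq:rel_l} to make the choice-of-path map well defined and compatible with the $2$-simplex relations. The only cosmetic difference is notation ($F,G'$ versus the paper's $\Phi,\Psi$ built from $\tilde\Phi,\tilde\Psi$ on the free groupoids).
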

\begin{proof} For the proof we denote by $\mathpzc{G}$ the groupoid defined by $X$ with relations \eqref{eq:rel_l} and $\Pi^l:=\Pi^l(X)$. We need to prove that $\mathpzc{G}\cong \Pi^l.$ The plan of the proof is the following. We will construct two functors $\Phi: \mathpzc{G}\to \Pi^l$ and $\Psi: \Pi^l \to \mathpzc{G}$ and prove that $\Psi \Phi={\sf Id}$ and $ \Phi\Psi = {\sf Id}.$ In order to construct the functors $\Phi$ and $\Psi,$ we first construct functors $ \tilde \Phi:F(X) \to \Pi^l$ and $\tilde \Psi: F(Q(\SS^l(X))) \to \mathpzc{G}.$

Since $Q(X)\subseteq Q(\SS^l(X)),$ we
can consider a functor $\tilde \Phi:F(Q(X))\to \Pi^l$ which is identical on vertices and arrows in $Q(X).$ Lemma \ref{lemma:an_equation_in_pi^l} implies that the relations \eqref{eq:rel_l} hold in $\Pi^l$. 
Therefore, $\tilde \Psi$ induces a functor $\Phi:\mathpzc{G} \to \Pi^l$.  

The set $Q(\SS^l(X))_1$ consists of couples $(x,x')$ such that $\dist(x,x')\leq l.$
We define a functor $\tilde \Psi : F(Q(\SS^l(X))) \to \mathpzc{G}$ on the quiver $Q(\SS^l(X))$ so that it is identical on vertices and sends an arrow $(x,x')$ to a path from $x$ to $x'$ of length $\leq l:$ 
\begin{equation}
\tilde \Psi( (x,x') ) = (y_0,y_1)\cdot (y_1,y_2)\cdot {\dots} \cdot (y_{n-1},y_n),
\end{equation}
where $y_0=x,y_n=x', \dist(y_i,y_{i+1})\leq 1$ and $0\leq n\leq l.$ Relations \eqref{eq:rel_l} show that the definition does not depend on the choice of the path. In order to prove that $\tilde \Psi$ induces a functor $\Psi: \Pi^l\to \mathpzc{G},$ we need to check that
\begin{equation}
\tilde \Psi(d_2((x_0,x_1,x_2))\cdot d_0((x_0 , x_1 , x_2)))= \tilde \Psi ( d_1((x_0,x_1,x_2))) 
\end{equation}
for any $(x_0,x_1,x_2)\in \SS^l(X)_2.$ We set $n:= \dist(x_0,x_1)$ and $m:= \dist( x_1,x_2).$ Then $n+m\leq l.$ Consider a path $(y_0,\dots,y_n)$ from $x_0=y_0$ to $x_1=y_n$, and a path $(y_n,\dots,y_{n+m})$ from $x_1=y_n$ to $x_2=y_{n+m}.$ Then 
\begin{equation}
\begin{split}
\tilde \Psi(d_2((x_0,x_1,x_2))\cdot d_0((x_0 , x_1 , x_2))) &= \tilde \Psi((x_0,x_1)) \cdot \tilde \Psi((x_1,x_2)) \\ 
 =((y_0,y_1)\cdot {\dots} \cdot (y_{n-1},y_n)) &\cdot ((y_n,y_{n+1})\cdot {\dots} \cdot (y_{n+m-1},y_{n+m}))\\
& = (y_0,y_1)\cdot {\dots} \cdot (y_{n+m-1},y_{n+m})\\
& = \tilde \Psi((x_0,x_2)) = \tilde \Psi(d_1((x_0,x_1,x_2))).
\end{split}
\end{equation}
Therefore $\tilde \Psi$ induces a functor $\Psi: \Pi^l \to \mathpzc{G}.$ 

Let us check the identities $\Psi \Phi = {\sf Id}_{\mathpzc{G}}$ and $\Phi \Psi = {\sf Id}_{\Pi^l}.$ It is sufficient to check them only for arrows from the generating quivers. The identity $\Psi(\Phi((x,x'))) = (x,x')$ is obvious for any arrow $ (x,x')$. For any $(x,x')\in Q(\SS^l(X))_1$ we take a path $(y_0,\dots,y_n)$ such that $x=y_0,x'=y_n, (x,x')\in X_1$ and $n\leq l.$ Using Lemma \ref{lemma:an_equation_in_pi^l}, we obtain 
$\Phi(\Psi(x,x')) = (y_0,y_1)\cdot {\dots} \cdot (y_{n-1},y_n) = (x,x').$
\end{proof}

\subsection{Grigor’yan-Jimenez-Muranov groupoid} 
Grigor’yan, 
Jimenez and Muranov in \cite{grigor2018fundamental} define a version of the fundamental groupoid of a digraph that we denote by $\Pi^{\sf GJM}(X)$. They show that the corresponding version of the fundamental group coincides with the fundamental group defined by Grygor'yan, Lin, Muranov and Yau in \cite{grigor2014homotopy}.  In their definition they use the notion of a square and a triangle in a digraph. A \emph{square} in $X$ is a quadruple of distinct vertices $(x,y,y',z)$ such that $\dist(x,y)=\dist(x,y')=\dist(y,z)=\dist(y',z)=1.$ A \emph{triangle} in $X$ is a triple of distinct vertices $(x,y,z)$ such that $ \dist( x,y) = \dist(y,z) = \dist(x,z)=1.$ Their definition can be rewritten as follows. For a digraph $X$ we consider a digraph $X^{\pm}$ such that  $X^{\pm}_0=X_0$ and $(x,x')\in X^{\pm}_1$ if and only if $(x,x')\in  X_1$ or $(x',x)\in X_1.$ 
Then the groupoid $\Pi^{\sf GJM}(X)$ is the groupoid defined by the digraph $X^{\pm}$ and relations 
\begin{description}
\item[\sf (GJM1')] $(x,y)\cdot (y,x) = {\sf id}_x;$
\item[\sf (GJM2')] $(x,y)\cdot (y,z) = (x,z),$ if $(\sigma(x),\sigma(y),\sigma(z)) $ is a triangle for some permutation $\sigma;$
\item[\sf (GJM3')] $(x,y) \cdot (y,z) = (x,y') \cdot (y',z)$ if $(\sigma(x),\sigma(y),\sigma(y'),\sigma(z))$ is a square, where  $\sigma$ is a power of the cyclic permutation $x\to y\to z\to y'\to x.$
\item[\sf (GJM4')] $(x,y) \cdot (y,z) \cdot (z,y') = (x,y')$ if $(\sigma(x),\sigma(y),\sigma(y'),\sigma(z))$ is a square, where  $\sigma$ is a power of the cyclic permutation $x\to y\to z\to y'\to x;$
\item[\sf (GJM5')] $(x,x)=1_x.$
\end{description}

\begin{lemma} The relations {\sf (GJM1')-(GJM5')} are equivalent to the following smaller set of relations:
\begin{description}
\item[\sf (GJM1)] $(x,y)\cdot (y,x) = {\sf id}_x;$
\item[\sf (GJM2)] $(x,y)\cdot (y,z) = (x,z),$ if $(x,y,z)$ is a triangle;
\item[\sf (GJM3)] $(x,y) \cdot (y,z) = (x,y') \cdot (y',z),$ if  $(x,y,y',z)$ is a square.
\end{description}
\end{lemma}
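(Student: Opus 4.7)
My plan is to prove the two implications separately.

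The backward direction is immediate: (GJM1) coincides with (GJM1'), and (GJM2), (GJM3) are the special cases of (GJM2') and (GJM3') where $\sigma$ is the identity permutation. So assuming (GJM1')--(GJM5'), the three relations of the smaller set hold trivially.

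For the forward direction, assuming (GJM1)--(GJM3), I need to derive each of (GJM2'), (GJM3'), (GJM4'), (GJM5') inside the quotient of $F(X^{\pm})$. First I dispose of (GJM5'): since $X^{\pm}$ is a digraph and the degeneracy map of its associated quiver sends $x$ to $(x,x)$, the equality $(x,x)=1_x$ already holds in the free groupoid $F(X^{\pm})$, so no additional relation is needed. Next, for (GJM2'), I set $(a,b,c):=(\sigma(x),\sigma(y),\sigma(z))$, so that $a\to b$, $b\to c$, $a\to c$ are forward arrows of $X$. I then run through the six permutations of $\{a,b,c\}$; in each case every symbol appearing in $(x,y)\cdot(y,z)=(x,z)$ is either a forward arrow or, by (GJM1), the inverse of a forward arrow, and the asserted identity reduces algebraically to $(a,b)\cdot(b,c)=(a,c)$, which is (GJM2).

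For (GJM3') and (GJM4'), the same strategy applies to the square $(a,b,b',c):=(\sigma(x),\sigma(y),\sigma(y'),\sigma(z))$ with forward arrows $a\to b$, $a\to b'$, $b\to c$, $b'\to c$. For $\sigma={\sf id}$, (GJM3') is exactly (GJM3), and (GJM4') follows immediately from the computation $(a,b)\cdot(b,c)\cdot(c,b')=(a,b')\cdot(b',c)\cdot(b',c)^{-1}=(a,b')$ using (GJM3) and (GJM1). For each of the other three powers of the cyclic permutation, I rewrite each factor on either side as a forward arrow or the inverse of one via (GJM1), and then derive the identity from the single square relation $(a,b)\cdot(b,c)=(a,b')\cdot(b',c)$ by left- and right-multiplication by suitable inverses.

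The main obstacle is purely bookkeeping: for (GJM3') and (GJM4') there are $2\cdot 4 = 8$ sub-cases indexed by the cyclic power, each requiring a small rearrangement. There is no conceptual difficulty, since the key observation is that once (GJM1) lets us invert every arrow, the full $S_3$-symmetry of a triangle and the $\mathbb{Z}/4$-cyclic symmetry of a square are already encoded in the single identities (GJM2) and (GJM3). I will organize the argument so that this conceptual point is visible and only a couple of representative sub-cases are written out explicitly, with the remaining ones left to the reader as analogous.
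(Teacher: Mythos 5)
Your proposal is correct and follows essentially the same route as the paper: dispose of (GJM5') via the degeneracy convention in the free groupoid, obtain (GJM4') from (GJM1) and (GJM3), and recover the permuted instances (GJM2'), (GJM3') by using (GJM1) to invert arrows and reduce each case to the single base relation for the triangle or square. The only cosmetic difference is that the paper first derives (GJM4') uniformly from (GJM1') and (GJM3') and then reduces (GJM3') to (GJM3), whereas you treat the cyclic powers of (GJM4') case by case; the underlying computations are identical.
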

\begin{proof}
First note that our definition of the free groupoid we have $(x,x)=s(x)={\sf id}_x.$ Hence we can skip \textsf{(GJM5')}. 
It is also easy to see that \textsf{(GJM1')} and \textsf{(GJM3')} imply \textsf{(GJM4')} 
\begin{equation}
(x,y)\cdot (y,z)\cdot (z,y') = (x,y')\cdot (y',z)\cdot (z,y') = (x,y')    
\end{equation}
if  $(\sigma(x),\sigma(y),\sigma(y'),\sigma(z))$ is a square, where  $\sigma$ is a power of the cyclic permutation $x\to y\to z\to y'\to x.$  Therefore, we can skip \textsf{(GJM4')}. Also note that {\sf (GJM1)=(GJM1')}.
So we need to prove that {\sf (GJM1)-(GJM3)} imply {\sf (GJM2'),(GJM3')}.

Let us prove that {\sf (GJM1),(GJM2)} imply {\sf (GJM2')}.  For any triangle $(x,y,z)$  the relation $(x,y)\cdot (y,z)=(x,z)$ combined with \textsf{(GJM1')} implies five permuted relations
\begin{equation*}
\begin{split}
(y,x)\cdot (x,z) &= (y,x) \cdot (x,y) \cdot (y,z) = (y,z),\\
 (x,z)\cdot (z,y) &= (x,y)\cdot (y,z)\cdot (z,y) = (x,y),\\
 (z,y)\cdot (y,x) &= ((x,y)\cdot (y,z))^{-1} = (x,z)^{-1} = (z,x),\\ 
 (z,x)\cdot (x,y) &= ( (y,x)\cdot (x,z) )^{-1} = (y,z)^{-1} = (z,y),\\
 (y,z)\cdot (z,x) &= ((x,z)\cdot (z,y))^{-1} = (x,y)^{-1} = (y,x). 
\end{split}
\end{equation*}

Let us prove that {\sf (GJM1),(GJM3)} imply {\sf (GJM3')}. If $(x,y,y',z)$ is a square the relation $(x,y)\cdot (y,z) = (x,y')\cdot (y',z)$ combined with  {\sf (GJM1)} implies three permuted relations
\begin{equation*}
\begin{split}
(y,z) \cdot (z,y') &= (y,x)\cdot (x,y) \cdot (y,z) \cdot (z,y') \\ &= (y,x)\cdot (x,y') \cdot (y',z) \cdot (z,y') =  (y,x)\cdot (x,y'); \\
(z,y') \cdot (y',x) & = ( (x,y') \cdot (y',z) )^{-1}  =( (x,y) \cdot (y,z) )^{-1} = (z,y)\cdot (y,x); \\
(y',x)\cdot (x,y) &= ( (y,x)\cdot (x,y') )^{-1} = ( (y,z)\cdot (z,y') )^{-1} = (y',z)\cdot (z,y). 
\end{split}
\end{equation*}
The assertion follows.
\end{proof}

\begin{theorem}\label{th:GJM} The 
Grigor’yan-Jimenez-Muranov  fundamental groupoid is naturally isomorphic to the $2$-fundamental groupoid 
\begin{equation}
\Pi^{\sf GJM}(X) \cong \Pi^2(X).
\end{equation}
\end{theorem}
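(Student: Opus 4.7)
The plan is to compare the presentation of $\Pi^2(X)$ given by Proposition \ref{prop:pi^l} with $l = 2$ (generators: arrows of $X$; relations: any two length-$\leq 2$ sequences with matching endpoints give the same composite) with the presentation of $\Pi^{\sf GJM}(X)$ established in the preceding lemma (generators: arrows of $X^\pm$; relations: \textsf{(GJM1)}--\textsf{(GJM3)}). I will construct mutually inverse functors $\Phi\colon \Pi^{\sf GJM}(X)\to \Pi^2(X)$ and $\Psi\colon \Pi^2(X)\to \Pi^{\sf GJM}(X)$ by specifying them on generators and checking that the defining relations of the source are satisfied in the target.

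To define $\Phi$, set $\Phi(x)=x$ on vertices, and on an arrow $(x,y)\in X^\pm_1$ let $\Phi(x,y)=(x,y)$ if $(x,y)\in X_1$, and $\Phi(x,y)=(y,x)^{-1}$ otherwise. If both $(x,y)$ and $(y,x)$ lie in $X_1$, the two prescriptions coincide: the case $n=0$, $m=2$ of Proposition \ref{prop:pi^l} applied to the closed sequence $x\to y\to x$ forces $1_x=(x,y)\cdot(y,x)$ in $\Pi^2(X)$, so $(y,x)=(x,y)^{-1}$. The relation \textsf{(GJM1)} then holds by the groupoid axioms; \textsf{(GJM2)} for a triangle $(x,y,z)$ is exactly the instance of Proposition \ref{prop:pi^l} that compares the length-one path $(x,z)$ with the length-two path $(x,y,z)$; and \textsf{(GJM3)} for a square $(x,y,y',z)$ is the instance comparing the two length-two paths $(x,y,z)$ and $(x,y',z)$. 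Hence $\Phi$ descends to the quotient.

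To define $\Psi$, use the inclusion $Q(X)\hookrightarrow Q(X^\pm)$ to send each arrow $(x,y)\in X_1$ to itself in $\Pi^{\sf GJM}(X)$. The content is to show that every relation from Proposition \ref{prop:pi^l} with $l=2$ holds in $\Pi^{\sf GJM}(X)$: given sequences $(x_0,\dots,x_n)$ and $(y_0,\dots,y_m)$ with $0\leq n,m\leq 2$, matching endpoints, and consecutive pairs in $X_1$, the two composites must agree. I would run a case analysis on $(n,m)$ and coincidences among vertices. The generic $(2,2)$ case with all vertices forming a genuine square reduces to \textsf{(GJM3)}; the generic $(1,2)$ case with a nondegenerate triangle reduces to \textsf{(GJM2)}; the case $n=0$, $m=2$ with $y_0=y_2\ne y_1$ is exactly \textsf{(GJM1)}. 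Every remaining configuration contains a repeated vertex, forcing some $(y_i,y_{i+1})=1_{y_i}$ to be a degenerate loop and collapsing the case to one already handled or to a tautology.

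Finally, $\Phi\Psi=\mathrm{Id}$ and $\Psi\Phi=\mathrm{Id}$ need only be checked on generators: on $(x,y)\in X_1$ both compositions return $(x,y)$ immediately; on a ``backward'' arrow $(x,y)\in X^\pm_1\setminus X_1$ (so $(y,x)\in X_1$), the composite $\Psi\Phi$ produces $\Psi((y,x)^{-1})=(y,x)^{-1}$ in $\Pi^{\sf GJM}(X)$, which equals $(x,y)$ by \textsf{(GJM1)}. The main technical hurdle is the case analysis in the construction of $\Psi$: one must enumerate the possible coincidence patterns among the (up to) six vertices of a pair of length-$\leq 2$ sequences, and in each exhibit a derivation of the identity from \textsf{(GJM1)}--\textsf{(GJM3)}. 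Once this is done, the isomorphism $\Pi^{\sf GJM}(X)\cong\Pi^2(X)$ follows.
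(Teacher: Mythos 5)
Your proposal is correct and follows essentially the same route as the paper: the same two functors $\Phi$ and $\Psi$ defined by the same formulas on generators, the same verification that \textsf{(GJM1)}--\textsf{(GJM3)} hold in $\Pi^2(X)$ and that the length-$\leq 2$ relations of Proposition \ref{prop:pi^l} hold in $\Pi^{\sf GJM}(X)$ via a case analysis on $(n,m)$, and the same check of mutual inversion on generators. The only caveat is in your treatment of the leftover configurations for $\Psi$: a repeated vertex need not be consecutive (e.g.\ two closed $2$-paths $x_0\to x_1\to x_0$ and $x_0\to y_1\to x_0$ with no degenerate loop), but that case still collapses to \textsf{(GJM1)}, so the argument goes through.
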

\begin{remark}
Grigor’yan, Jimenez and Muranov define a CW-complex $\Delta(X)$ associated with $X$, whose homotopy groupoid coincides with $\Pi^{\sf GJM}(X).$ In some sense $\SS^2(X)$ is similar to $\Delta(X).$ However, in general $|\SS^2(X)|$ is not homotopy equivalent to $\Delta(X).$ For example, one can take 
$\begin{tikzcd}[row sep=-1mm, column sep=5mm]
& \cdot\ar[rd] & 
\\
\cdot \ar[r] \ar[ru] \ar[rd] & \cdot \ar[r] & \cdot 
\\
& \cdot \ar[ru] & 
\end{tikzcd}$. 
In this example $\Delta(X)\sim S^2$ and $|\NN^2(X)|$ is contractible. 
\end{remark}
\begin{proof}[Proof of Theorem \ref{th:GJM}] Set $\Pi^2:=\Pi^2(X)$ and $\Pi^{\sf GJM}:=\Pi^{\sf GJM}(X).$ Here we will use the description of $\Pi^2$ given in Proposition \ref{prop:pi^l}. So $\Pi^2$ is the groupoid defined by the digraph $X$ and relations \eqref{eq:rel_l} for $l=2.$ In other words, all paths with the same tail and head of length $\leq 2$ are identified. In order to prove the statement we will construct functors $\Phi:\Pi^{\sf GJM}\to \Pi^2$ and $\Psi:\Pi^2 \to \Pi^{\sf GJM}$ such that $\Phi\circ \Psi={\sf Id}$ and $\Psi\circ \Phi = {\sf Id}.$ 

The digraph $X$ is a sub-digraph of $X^{\pm}.$ The embedding induces a functor $\tilde \Psi : F(X) \to \Pi^{\sf GJM}(X).$ In order to prove that $\tilde \Psi$ induces a functor $\Psi: \Pi^2 \to \Pi^{\sf GJM}$ we need to check that the relations  \eqref{eq:rel_l} are satisfied in $\Pi^{\sf GJM}$ for $0\leq n,m\leq 2.$ Without loss of generality we can assume that $n\leq m.$ If $n=m=2,$ this follows from \textsf{(GJM3)}. If $n=1,m=2,$ this follows from \textsf{(GJM2)}. If $n=m=1,$ it is obvious. If $n=0,m=2,$ this follows from \textsf{(GJM1)}. If $n=0,m=1,$ then $x_0=x_1=y_0=y_1$ and it follows from the equation $1_{x_0}=(x_0,x_0).$ If $n=m=0,$ it is obvious. Therefore, $\tilde \Psi $ induces a functor $\Psi : \Pi^2 \to \Pi^{\sf GJM}.$

Consider a functor 
$\tilde \Phi: 
 F(X^{\pm}) \to 
 \Pi^2$
which is identical on objects and defined on arrows of $X^{\pm}$ by the formula
\begin{equation}
\tilde \Phi((x,x')) = \begin{cases}
(x,x') & \text{ if } (x,x')\in  X_1 \\
(x',x)^{-1} & \text{ if } (x',x) \in  X_1 \text{ and } (x,x')\notin  X_1.
\end{cases}
\end{equation}
In order to prove that $\tilde \Phi$ induces a functor $\Phi:\Pi^{\sf GJM} \to \Pi^2,$ we need to prove that $\tilde \Phi$ sends all the relations \textsf{(GJM1)}-\textsf{(GJM3)} to identities.  

\textsf{(GJM1)}. We need to consider three cases. 

\textsf{(GJM1)}.1. Assume that $(x,y),(y,x)\in X_1.$ Then $(x,y)\cdot (y,x)=(x,x)$ in $\Pi^2$ and 
\begin{equation*}
 \tilde \Phi((x,y))\cdot \tilde \Phi((y,x)) = (x,y)\cdot (y,x)  = (x,x)=1_x.   
\end{equation*}

\textsf{(GJM1)}.2. Assume that $(x,y)\in X_1$ and $(y,x)\notin X_1.$ Then 
\begin{equation*}
\tilde \Phi((x,y)\cdot (y,x)) = (x,y)\cdot (x,y)^{-1} =1_x. 
\end{equation*}

\textsf{(GJM1)}.3. The case $(x,y)\notin X_1$ and $(y,x)\in X_1$ is similar to \textsf{(GJM1)}.2.

\textsf{(GJM2)}. If $(x,y,z)$ is a triangle, then  $(x,y) \cdot (y,z)=(x,y)$ in $\Pi^2$ and 
\begin{equation*}
\tilde \Phi((x,y)\cdot (y,z)) = (x,y)\cdot (y,z)=(x,z) = \tilde \Phi((x,z)). 
\end{equation*}

\textsf{(GJM3)} Assume that $(x,y,y',z)$ is a square. Then  $(x,y)\cdot (y,z)=(x,z)=(x,y')\cdot (y',z)$ in $\Pi^2$ and 
\begin{equation*}
\tilde \Phi((x,y)\cdot (y,z)) = (x,y) \cdot (y,z) = (x,y')\cdot (y',z) = \tilde \Phi((x,y')\cdot (y',z)).
\end{equation*}
Therefore, $\tilde \Phi$ induces a well-defined functor
$\Phi : \Pi^{\sf GJM} \to \Pi^2.$

Now we prove that $\Phi\circ \Psi = {\sf Id}_{\Pi^2}$ and $\Psi \circ  \Phi={\sf Id}_{\Pi^{\sf GJM}}.$ It is enough to prove these equations for arrows from generating quivers. For any $(x,x')\in X_1$ the equation $ \Phi(\Psi((x,x'))) = (x,x')$ is obvious. Hence, $\Phi\circ \Psi={\sf Id}_{\Pi^{\sf GJM}}.$ For any $(x,x')\in X_1$ the equation $ \Psi(\Phi((x,x'))) = (x,x')$ is also obvious. If $(x,x')\notin X_1$ but $(x',x)\in X_1,$ then $\Psi(\Phi((x,x'))) = \Psi((x',x)^{-1})=(x',x)^{-1}=(x,x')$ by \textsf{(GJM1)}.
\end{proof}

\subsection{Clusters in path chains} Let $X$ be a digraph and $\RR(X)$ be the chain complex of regular accessible sequences. For two vertices $x,x'$ we denote by 
$\AA_n(X,x,x')$ the submodule of $\AA_n(X)$ generated by paths with tail $x$ and head $x'.$ Then 
\begin{equation}
\AA_n(X)=\bigoplus_{(x,x')\in X_0^2} \AA_n(X,x,x').    
\end{equation}
It is known that this decomposition induces a decomposition for the components of the complex of path chains
\begin{equation}\label{eq:clusters}
\Omega_n(X) = \bigoplus_{(x,x')\in X_0^2} \Omega_n(X,x,x'),
\end{equation}
where $\Omega_n(X,x,x')=\Omega_n(X)\cap \AA_n(X,x,x')$ \cite[Lemma 2.2]{grigor2022advances}. Elements of $\Omega_n(X,x,x')$ are called clusters or $(x,x')$-clusters. 

In this subsection we want to present a finer decomposition of $\Omega_n(X)$, which is indexed by morphisms of the $2$-fundamental groupoid. For a morphism $\gamma$ in $\Pi^2(X)$ we denote by $\AA_n(X,\gamma)$ the submodule of $\AA_n(X)$ generated by paths that represent $\gamma.$ Then 
\begin{equation}
 \AA_n(X,x,x') = \bigoplus_{\gamma\in \Pi^2(X)(x,x')} \AA_n(X,\gamma).   
\end{equation}
We consider the module $\Omega_n(X,\gamma)=\Omega_n(X)\cap \AA_n(X,\gamma),$ whose elements will be called $\gamma$-clusters.  

\begin{proposition}\label{prop:clusters} 
Any $(x,x')$-cluster can be uniquely presented as a sum of $\gamma$-clusters
\begin{equation}
\Omega_n(X,x,x') = \bigoplus_{\gamma\in \Pi^2(X)(x,x')} \Omega_n(X,\gamma).
\end{equation}
\end{proposition}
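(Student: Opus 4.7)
The decomposition $\AA_n(X,x,x') = \bigoplus_{\gamma} \AA_n(X,\gamma)$ already holds by construction (every path has a well-defined class in $\Pi^2$, so the basis of paths is partitioned by $\gamma$). Hence the uniqueness of the presentation is automatic, and the inclusion $\bigoplus_\gamma \Omega_n(X,\gamma) \subseteq \Omega_n(X,x,x')$ is trivial. The content of the proposition is the reverse inclusion: if $a \in \Omega_n(X,x,x')$ is decomposed as $a = \sum_\gamma a_\gamma$ with $a_\gamma \in \AA_n(X,\gamma)$, then each $a_\gamma$ already lies in $\Omega_n(X)$. I plan to use the characterization $\Omega_n(X) = \{a \in \AA_n(X) \mid d_i(a) \in \AA_{n-1}(X) \text{ for } 1\le i \le n-1\}$ from Proposition \ref{prop:description_of_PC}, together with the type decomposition $\RR_{n-1}(X) = \bigoplus_{(k_1,\dots,k_{n-1})} \RR_{(k_1,\dots,k_{n-1})}$ from its proof; recall that for $a \in \AA_n(X)$ and $1 \le i \le n-1$, $d_i(a)$ has components only in $\AA_{n-1} = \RR_\mathbf{1}$ and $\RR_{t_i}$.

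\textbf{Extending the $\Pi^2$-grading to $\RR_{t_i}$.} The key step is to observe that the $\Pi^2(X)$-grading can be extended from $\AA_{n-1}(X,x,x')$ to the submodule $\RR_{t_i}(X,x,x') \subseteq \RR_{n-1}(X,x,x')$. Indeed, for a regular accessible sequence $\sigma = (x_0,\dots,x_{n-1})$ of type $t_i$, we have $\dist(x_{j-1},x_j)=1$ for $j \ne i$ and $\dist(x_{i-1},x_i)=2$; choosing any intermediate vertex $y$ with $(x_{i-1},y),(y,x_i)\in X_1$, define the class $[\sigma] \in \Pi^2(X)(x_0,x_{n-1})$ as
\[
(x_0,x_1)\cdot{\dots}\cdot(x_{i-2},x_{i-1})\cdot(x_{i-1},y)\cdot(y,x_i)\cdot(x_i,x_{i+1})\cdot{\dots}\cdot(x_{n-2},x_{n-1}).
\]
By the relations \eqref{eq:rel_l} describing $\Pi^2$ (Proposition \ref{prop:pi^l}), any two length-$2$ paths from $x_{i-1}$ to $x_i$ give equal morphisms in $\Pi^2$, so this class is independent of $y$. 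This yields a well-defined direct sum decomposition $\RR_{t_i}(X,x,x') = \bigoplus_\gamma \RR_{t_i}(X,\gamma)$.

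\textbf{Compatibility of $d_i$ with the grading.} I will verify that, for $1 \le i \le n-1$, the face map sends $\AA_n(X,\gamma)$ into $\AA_{n-1}(X,\gamma) \oplus \RR_{t_i}(X,\gamma)$. Take a path $p=(x_0,\dots,x_n)$ representing $\gamma$; then $d_i(p) = (x_0,\dots,\hat x_i,\dots,x_n)$ vanishes if $x_{i-1}=x_{i+1}$, lies in $\AA_{n-1}$ if $(x_{i-1},x_{i+1})\in X_1$, and lies in $\RR_{t_i}$ if $\dist(x_{i-1},x_{i+1})=2$. In the first nontrivial case, the $\Pi^2$-class of $d_i(p)$ uses $(x_{i-1},x_{i+1}) = (x_{i-1},x_i)\cdot(x_i,x_{i+1})$ from \eqref{eq:rel_l}; in the second, the extended grading is defined by choosing $y=x_i$. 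Either way the class is $\gamma$. Now, writing $a=\sum_\gamma a_\gamma$ with $a\in\Omega_n(X,x,x')$, the condition $d_i(a)\in\AA_{n-1}(X)$ means the $\RR_{t_i}$-components sum to zero; since these components lie in distinct direct summands $\RR_{t_i}(X,\gamma)$, each vanishes separately. Hence $d_i(a_\gamma)\in\AA_{n-1}(X)$ for each $\gamma$ and every $1\le i\le n-1$, so $a_\gamma\in\Omega_n(X,\gamma)$ as required.

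\textbf{Main obstacle.} The only nontrivial issue is extending the $\Pi^2$-grading from paths to the larger auxiliary space $\RR_{t_i}$ of sequences with one ``jump''. This is where the choice $l=2$ (rather than, say, $l=1$) becomes essential: precisely because any two length-$2$ representatives of a $\Pi^2$-morphism are identified by the relations \eqref{eq:rel_l}, the class $[\sigma]$ is independent of the intermediate vertex $y$. Once this point is established, the rest is a bookkeeping argument comparing the type-$\mathbf{1}$ and type-$t_i$ components of $d_i(a)$.
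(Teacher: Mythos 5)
Your proof is correct and takes essentially the same route as the paper's: both rest on Proposition \ref{prop:description_of_PC} together with the key observation that all $2$-paths between two fixed vertices represent the same morphism of $\Pi^2(X)$, so that the type-$t_i$ component of $d_i(a)$ splits along the $\gamma$-grading and must vanish summand by summand. Your well-defined $\Pi^2$-grading on $\RR_{t_i}$ is exactly the paper's coefficient argument (its set $T$ of lifts of a fixed non-path sequence) viewed from the target side rather than the source side.
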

\begin{proof}
For an element $r\in \RR_n(X)$ and an accessible sequence $(x_0,\dots,x_n)$ we denote by $r^{x_0,\dots,x_n}\in \KK$ the coefficient of $r$ at the sequence $(x_0,\dots,x_n)$. If $(x_0,\dots,x_n)$ is not regular, we assume $r^{x_0,\dots,x_n}=0.$ Therefore, $r\in \AA_{n}(X)$ if and only if for any accessible sequence $(x_0,\dots,x_n),$ which is not a path, we have $r^{x_0,\dots,x_n}=0.$

For an element $a\in \AA_n(X,x,x')$ we denote by $a_\gamma$ its component in $\AA_n(X,\gamma).$ Its coefficients are defined by the formula
\begin{equation}
 a_\gamma^{x_0,\dots,x_n} = 
 \begin{cases}
 a^{x_0,\dots,x_n}, & \text{ if } (x_0,\dots,x_n) \text{ represents } \gamma,\\
 0, & \text{ else. } 
 \end{cases}   
\end{equation}

Assume that $a\in \Omega_n(X,x,x')$ and $\gamma\in \Pi^2(X)(x,x').$ It is sufficient to prove that $a_\gamma\in \Omega_n(X).$ Here we use Proposition \ref{prop:description_of_PC} for the description of $\Omega_n(X).$ Then $d_i(a)\in \AA_{n-1}(X)$ for any $1\leq i\leq n-1.$ Consider an accessible sequence $(y_0,\dots,y_{n-1}),$ which is not a path. Therefore we know that $d_i(a)^{y_0,\dots,y_{n-1}}=0,$ and we need to prove that $d_i(a_\gamma)^{y_0,\dots,y_{n-1}}=0$ for any $1\leq i\leq n-1.$ By the formula for $d_i$ we see that
\begin{equation}\label{eq:sum_d_i}
0=d_i(a)^{y_0,\dots,y_{n-1}}=\sum_{z\in X_0} a^{y_0,\dots, y_{i-1},z,y_i,\dots,y_{n-1}}.
\end{equation}
However, if $\sigma=(y_0,\dots,y_{i-1},z,y_{i+1},\dots,y_n)$ is not a regular path, then 
$a^{\sigma} =0.$ Therefore, if we denote by $T$ 
(here $T$ depends on $(y_0,\dots,y_n)$ and $i$) 
the set of all regular paths of the form $(y_0,\dots ,y_{i-1},$ $z,y_i,\dots,y_n)$, where $z\in X_0,$ we obtain
$\sum_{t\in T} a^{t} = 0.$
By Proposition \ref{prop:pi^l}
for any $z,z'$ such that $(y_{i-1},z,y_i)$ and $(y_{i-1},z',y_i)$ are paths,  the paths $(y_{i-1},z,y_i)$ and $(y_{i-1},z',y_i)$ represent the same element in $\Pi^2(X).$ Hence, any two elements $t_1,t_2\in T$ also represent the same element in $\Pi^2(X)$. If all elements of $T$ represent the element $\gamma,$ then $a_\gamma^{t}=a^{t}$ for all $t\in T.$ If all elements of $T$ represent some other element $\gamma'\ne \gamma$, then $a_\gamma^{t}=0$ for all $t\in T.$ In both cases we obtain 
$d_i(a_\gamma)^{y_0,\dots,y_{n-1}}=\sum_{t\in T} a_\gamma^{t}=0.$
\end{proof}

\section{\bf Covering  digraphs}

\subsection{Covering quivers} 

Denote by $*$ the one point quiver with vertex $0$, by $I$ the quiver with two vertices $0,1$ and one non-degenerate arrow $0\to 1.$ There are two morphisms $\iota_k:*\to I$  such that $\iota_k(0)=k$ for $k\in \{0,1\}.$ A morphism of quivers $p:E\to Q$ is called covering of $X$, if for any commutative diagram 
\begin{equation}
\begin{tikzcd}
* \ar[r] \ar[d,"i_k"] & E \ar[d,"p"] \\
I \ar[r] & Q
\end{tikzcd}
\end{equation}
(with any $k\in \{0,1\}$) there exists a unique morphism $I\to E$ making the diagram commutative. In other words, $p$ is covering, if for any arrow of $Q$ a lift of its tail or head uniquely defines a lift of the arrow. To be more precise: $p$ is a covering if and only if for any $\gamma\in Q_1$ and any $e\in p^{-1}(d_1(\gamma))$ (resp. $e'\in p^{-1}(d_0(\gamma) )$) there exists a unique $\tilde \gamma \in  E_1$ such that $p(\tilde \gamma)=\gamma$ and $d_1(\tilde\gamma) = e$ (resp. $d_0(\tilde\gamma) = e'$).

For subsets $W,W'$ of vertices of a  quiver $Q$ we set 
\begin{equation}
Q(W,W') = \{ \gamma\in Q_1\mid d_1(\gamma) \in W, d_0(\gamma)\in W' \},    
\end{equation}
and if $W=\{q\}$ or $W'=\{q\},$ we set $Q(q,W')=Q(\{q\},W')$ and $ Q(W,q)=Q(W,\{q\}).$  Then the property of being covering for $p$ can be also reformulated as follows: for any $e\in E_0$ and $q\in Q_0$ the maps
\begin{equation}
 E(e,p^{-1}(q)) \to Q(p(e),q), \hspace{1cm}  E(p^{-1}(q),e) \to Q(q,p(e))
\end{equation}
are bijections.

\begin{lemma}\label{lemma:sub_1_covering}
Let $p:E\to Q$ be a covering quiver and $Q'\subseteq Q$ is a subquiver. Then the restriction $p^{-1}(Q')\to Q'$ is also a covering quiver.   
\end{lemma}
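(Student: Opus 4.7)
The plan is to unpack the definition of $p^{-1}(Q')$, verify it is an honest subquiver of $E$, and then derive the covering property from the covering property of $p$ itself.

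First I would make precise the construction: set $(p^{-1}(Q'))_0 := p^{-1}(Q'_0)$ and $(p^{-1}(Q'))_1 := p^{-1}(Q'_1)$, with $d_0, d_1, s_0$ inherited from $E$. To see that this data really gives a subquiver, I would use that $p$ is a morphism of quivers (so $p\,d_i = d_i\,p$ and $p\,s_0 = s_0\,p$) and that $Q'\subseteq Q$ is a subquiver (so $Q'_0, Q'_1$ are closed under $d_i$ and $s_0$). Concretely, if $\tilde\gamma \in p^{-1}(Q'_1)$, then $d_i(p(\tilde\gamma)) = p(d_i(\tilde\gamma))$ lies in $Q'_0$ because $p(\tilde\gamma)\in Q'_1$ and $Q'$ is a subquiver; so $d_i(\tilde\gamma)\in p^{-1}(Q'_0)$. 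Similarly, for $e\in p^{-1}(Q'_0)$, $p(s_0(e)) = s_0(p(e))\in Q'_1$, giving $s_0(e)\in p^{-1}(Q'_1)$.

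Next I would verify the covering property using the pointwise reformulation stated just before the lemma. Fix $\gamma\in Q'_1$ and $e\in (p^{-1}(Q'))_0$ with $p(e) = d_1(\gamma)$. Since $\gamma\in Q_1$ and $p$ is a covering, there is a unique $\tilde\gamma\in E_1$ with $p(\tilde\gamma)=\gamma$ and $d_1(\tilde\gamma)=e$. Because $p(\tilde\gamma)=\gamma\in Q'_1$, this $\tilde\gamma$ in fact belongs to $(p^{-1}(Q'))_1$, so it provides the required lift inside $p^{-1}(Q')$. Uniqueness in $p^{-1}(Q')$ is automatic from uniqueness in $E$. The argument for lifting given the head $d_0(\gamma)$ is identical, swapping $d_0$ and $d_1$.

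There is no real obstacle here; the only thing to be careful about is to keep the notational distinction between $p^{-1}(Q'_0), p^{-1}(Q'_1)$ and to check both closures (under $d_i$ and under $s_0$) so that $p^{-1}(Q')$ is a legitimate quiver before speaking about coverings. Once that bookkeeping is done, the covering property for the restriction is an immediate consequence of the one for $p$.
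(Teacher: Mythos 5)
Your proof is correct and fills in exactly the routine verification that the paper dismisses with the single word ``Obvious'': check that $p^{-1}(Q')$ is a subquiver of $E$ and then observe that lifts of arrows of $Q'$ automatically land in $p^{-1}(Q')$, with uniqueness inherited from $E$. There is nothing to compare beyond that, since the paper gives no argument of its own.
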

\begin{proof}
Obvious. 
\end{proof}

\begin{lemma}\label{lemma:path_lifting}
Let $p:E\to Q$ be a covering of quivers and $(\gamma_1,\dots,\gamma_n)$ be a path in $Q$ and $(q_0,\dots,q_n)$ be the associated sequence of vertices in $Q_0$. Then for any $0\leq k\leq n$ and $e\in p^{-1}(q_k)$ there exists a unique path $(\tilde \gamma_1,\dots,\tilde \gamma_n)$ in $E$ with the associated sequence of vertices $(e_0,\dots,e_n)$ such that $p(\tilde \gamma_i)=\gamma_i$ for any $0\leq i\leq n$ and $e_k=e.$
\end{lemma}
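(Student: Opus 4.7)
The plan is a straightforward two-sided induction on the position of the arrow in the path, propagating the chosen lift $e_k = e$ both to the right (toward $e_n$) and to the left (toward $e_0$). The covering property of $p$ is precisely what makes each single-arrow extension exist and be unique, so the lemma reduces to iterating this one arrow at a time.

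For the forward pass, I would induct on $i = k+1, \ldots, n$: assuming $e_{i-1} \in p^{-1}(q_{i-1})$ has already been defined, the tail-lifting bijection $E(e_{i-1}, p^{-1}(q_i)) \to Q(q_{i-1}, q_i)$ supplied by the covering property yields a unique $\tilde\gamma_i \in E_1$ with $p(\tilde\gamma_i) = \gamma_i$ and $d_1(\tilde\gamma_i) = e_{i-1}$, and I set $e_i := d_0(\tilde\gamma_i) \in p^{-1}(q_i)$. For the backward pass, I induct on $i = k, k-1, \ldots, 1$ using the symmetric head-lifting bijection $E(p^{-1}(q_{i-1}), e_i) \to Q(q_{i-1}, q_i)$: given $e_i$, this produces a unique $\tilde\gamma_i$ lifting $\gamma_i$ with $d_0(\tilde\gamma_i) = e_i$, and I set $e_{i-1} := d_1(\tilde\gamma_i) \in p^{-1}(q_{i-1})$.

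Uniqueness of the whole lift then follows from the uniqueness clauses in the two covering bijections applied arrow-by-arrow: any other lift $(\tilde\gamma'_1,\dots,\tilde\gamma'_n)$ with $e'_k = e$ must coincide with the constructed one at each step of both inductions. There is no real obstacle here; the lemma is essentially the definition of a covering of quivers unfolded along the path, and the only point of care is to use the tail-lifting half of the definition for the forward pass and the head-lifting half for the backward pass.
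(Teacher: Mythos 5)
Your proof is correct and is exactly the argument the paper has in mind: the paper's proof is simply ``Obvious,'' and your two-sided induction using the tail-lifting bijection forward and the head-lifting bijection backward is the standard unfolding of the covering condition along the path. No gaps.
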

\begin{proof}
Obvious. 
\end{proof}

\subsection{\textit{l}-covering digraphs}

A morphism of digraphs $p:E\to X$ is called $1$-covering, if $Q(p):Q(E)\to Q(E)$ is a covering quiver.

For a natural $n$ we denote by $D_n(X)$ the digraph such that $D_n(X)_0=X_0$ and $D_n(X)_1=\{(x,x')\mid \dist(x,x')\leq n\}.$ In particular, $D_1(X)=X.$ The digraph $D_n(X)$ is natural by $X$ and we obtain a functor
\begin{equation}
 D_n: {\sf Digr}\longrightarrow {\sf Digr}.   
\end{equation}

For a natural $l$ we say that a morphism of digraphs $p:E\to X$ is an \emph{$l$-covering}, if $D_n(p):D_n(E)\to D_n(X)$ is a $1$-covering for any $1\leq n\leq l.$

\begin{proposition}
\label{prop:covering_description1} 
For a morphism of digraphs $p:E\to X$ the following statements are equivalent.
\begin{enumerate}
\item $p$ is an $l$-covering of digraphs.

\item $D_l(p):D_l(E) \to D_l(X)$ is a $1$-covering and $E=D_l(p)^{-1}(X)$. 

\item $p$ is a $1$-covering and, if  $(e_0,\dots,e_n)$ and $(e'_0,\dots,e'_{m})$ are paths in $E$ such that $0\leq n,m\leq l$ and either $e_0=e'_0,$ $p(e_n)=p(e_{m}')$ or $p(e_0)=p(e_0'),$ $e_n=e'_{m},$ then $e_0=e'_0, e_n=e'_{m}.$

\item For any vertices $x,x'$ of $X$ such that $0\leq \dist(x,x')\leq l$ there is a unique bijection $\alpha:p^{-1}(x)\to p^{-1}(x')$ such that for any $e\in p^{-1}(x)$ we have $\dist(e,\alpha(e)) = \dist(x,x')$  and $\dist(e,e')>l$ for any $e'\in p^{-1}(x')\setminus\{\alpha(e)\}.$
\end{enumerate}

\end{proposition}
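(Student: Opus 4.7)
The plan is to prove (1)$\Leftrightarrow$(2), (1)$\Leftrightarrow$(3), and (1)$\Leftrightarrow$(4) separately, each by a short argument based on the unique lifting of arrows along $1$-covering quiver morphisms and Lemma \ref{lemma:path_lifting}. The key observation used in all three equivalences is that, whenever $D_l(p)$ is a $1$-covering, an arrow of $D_l(X)_1$ (equivalently, an ordered pair $(x,x')$ with $0\le \dist(x,x')\le l$) lifts uniquely along $D_l(p)$ once either a lift of its tail or a lift of its head is specified.

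For (1)$\Rightarrow$(2), the only non-trivial point is that every arrow $(e,e')\in D_l(E)_1$ with $(p(e),p(e'))\in X_1$ lies in $E_1$. Using that $p=D_1(p)$ is itself a $1$-covering, the arrow $(p(e),p(e'))$ has a unique lift $(e,\tilde e)\in E_1$ with tail $e$; viewing this inside $D_l(E)_1$ and invoking uniqueness of $D_l$-lifts forces $\tilde e=e'$. For (2)$\Rightarrow$(1), the $D_l$-lift of an arrow in $X_1\subseteq D_l(X)_1$ lies in $D_l(p)^{-1}(X)=E$, which yields the $1$-covering property of $p$; for $1\le n\le l$, existence of lifts of arrows in $D_n(X)_1$ follows by lifting a geodesic (Lemma \ref{lemma:path_lifting}), and uniqueness is inherited from the uniqueness of lifts along $D_l(p)$.

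The equivalence (1)$\Leftrightarrow$(3) is parallel: the condition in (3) says that two paths in $E$ of length $\le l$ sharing a tail (resp.\ head) and whose other endpoints have the same $p$-image must have equal other endpoints, which is exactly uniqueness of lifts in $D_l(E)$ over $D_l(X)$, while existence of lifts of arrows in $D_n(X)_1$ again comes from lifting geodesics via $p$. For (1)$\Leftrightarrow$(4), given $x,x'$ with $\dist(x,x')\le l$, define $\alpha\colon p^{-1}(x)\to p^{-1}(x')$ sending $e$ to the head of the unique $D_l$-lift of $(x,x')$ starting at $e$; bijectivity follows from the symmetric head-lift. The identity $\dist_E(e,\alpha(e))=\dist_X(x,x')$ splits into ``$\le$'' by lifting a geodesic in $X$ and ``$\ge$'' by projecting a geodesic in $E$. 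The condition $\dist_E(e,e')>l$ for $e'\ne \alpha(e)$ in $p^{-1}(x')$ is a restatement of uniqueness of $D_l$-lifts, and conversely (4) repackages exactly the data needed to verify that $D_n(p)$ is a $1$-covering for every $1\le n\le l$.

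I expect the main obstacle to be the careful bookkeeping in the direction (1)$\Rightarrow$(2), where one must reconcile the two unique-lift statements coming from $D_1(p)$ and $D_l(p)$ and show they agree; once this comparison of lifts across different levels of the $D_n$ filtration is made precise, the remaining implications are routine applications of Lemma \ref{lemma:path_lifting} combined with uniqueness of lifts.
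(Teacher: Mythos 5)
Your proof is correct and uses essentially the same ingredients as the paper's: unique lifting of arrows along the $1$-coverings $D_n(p)$, Lemma \ref{lemma:path_lifting} applied to geodesics, and the inequality \eqref{eq:dist_of_images} to upgrade $\dist(e,\alpha(e))\leq\dist(x,x')$ to an equality. The only difference is organizational — you prove three equivalences radiating from (1) where the paper runs the cycle $(1)\Rightarrow(2)\Rightarrow(3)\Rightarrow(4)\Rightarrow(1)$ — and your individual implications (e.g.\ $(1)\Rightarrow(2)$ and $(4)\Rightarrow(1)$) coincide with the paper's arguments, so this is a repackaging rather than a genuinely different route.
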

\begin{proof} 
$(1)\Rightarrow (2).$ By the assumption $D_l(p)$ is a $1$-covering. Obviously, $E\subseteq D_l(p)^{-1}(X).$ Prove $D_l(p)^{-1}(X) \subseteq E.$ Take $(e,e')\in D_l(p)^{-1}(X)_1$ and set $x=p(e),x'=p(e').$ Then $(x,x')\in X_1.$ Since $p$ is a $1$-covering, then there exists a unique $e''\in p^{-1}(x')$ such that $(e,e'')\in E_1.$ On the other hand $D_l(p)$ is also $1$-covering, and hence, $e''=e'.$ Therefore $(e,e')\in E_1.$ 

$(2)\Rightarrow (3).$ Lemma \ref{lemma:sub_1_covering} implies that $p$ is a $1$-covering.  Take two paths 
$(e_0,\dots,e_n)$ and $(e'_0,\dots,e'_{m})$ in $E$ such that $0\leq n,m\leq l$ and $e_0=e'_0,$ $p(e_n)=p(e_{m}').$ Set $ e:=e_0=e'_0$ and $ x:=p( e),  x':=p(e_n)=p(e_{m}').$ 
Then  $\dist(x, x')\leq \dist(e_0,e_n)\leq l$ and $(x, x')\in (D_l(X))_1.$ Since $D_l(p)$ is a $1$-covering, there exists a unique $ e'\in p^{-1}( x')$ such that $( e, e')\in (D_l(X))_1.$ Therefore $e_n= e'=e'_m.$ The second case, where $p(e_0)=p(e'_0)$ and 
$e_n=e'_{m},$ is similar. 

$(3)\Rightarrow (4).$  Chose a path $(x_0,\dots,x_n)$ in $X$ such that $x=x_0$ and $x'=x_n$ and $ \dist(x,x')\leq n\leq l.$ Since $p$ is a $1$-covering, by Lemma \ref{lemma:path_lifting} for any $e\in p^{-1}(x)$ there exists a path $(e_0,\dots,e_n)$ such that $e=e_0$ and $p(e_i)=x_i$ for any $i.$ Then we set $\alpha(e)=e_n\in p^{-1}(x').$ 
By the assumption the definition of $\alpha(e)$ does not depend on the choice of the path $(x_0,\dots,x_n).$ So, using the assumption once again, we obtain that $ \alpha : p^{-1}(x)\to p^{-1}(x')$ is a well-defined map  such that $\alpha(e)\in p^{-1}(x')$ is the unique vertex with the property $ \dist(e,\alpha(e))\leq l.$ Since we can take $n=\dist(x,x')$ in the definition of $\alpha(e),$ we see that $ \dist( e,\alpha(e))\leq \dist(x,x').$ Then \eqref{eq:dist_of_images} implies $ \dist( e,\alpha(e))= \dist(x,x').$ Dually, we can define a map $\beta: p^{-1}(x')\to p^{-1}(x)$ such that $\beta(e')\in p^{-1}(x)$ is the unique vertex such that $ \dist( \beta(e'),e')\leq l.$ It is easy to see that $\alpha \circ \beta ={\sf id}$ and $ \beta \circ \alpha ={\sf id}.$

$(4) \Rightarrow (1).$ 
Let $1\leq  n\leq l$ and $(x,x')\in D_n(X)_1$ and 
$e\in p^{-1}(x).$  
Then there exists a unique $e'=\alpha(e)\in p^{-1}(x')$ such that $(e,e')\in D_n(E)_1.$ 
Similarly, if  
$(x,x')\in D_n(X)_1$ and $e'\in p^{-1}(x'),$  then there exists a unique $e=\alpha^{-1}(e')\in p^{-1}(x)$ such that $(e,e')\in D_n(E)_1.$ Therefore $D_n(E)\to D_n(X)$ is a $1$-covering for any $1\leq n\leq l$. 
\end{proof}

\begin{corollary}\label{cor:dist=dist}
Let $p:E\to X$ be an $l$-covering. Then the following holds. 
\begin{itemize}
    \item If $e,e'\in E_0$ such that $\dist( e,e')\leq l,$ then $ \dist (p(e),p(e')) = \dist(e,e');$
    \item If $e,e'\in p^{-1}(x)$ for some $x\in X_0$ and $e\ne e',$ then $\dist( e,e')>l.$
\end{itemize}
\end{corollary}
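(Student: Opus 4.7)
Both bullets should fall out directly from characterization (4) of Proposition \ref{prop:covering_description1}, which provides, whenever $\dist(x,x')\leq l$, a canonical bijection $\alpha:p^{-1}(x)\to p^{-1}(x')$ satisfying $\dist(\tilde e,\alpha(\tilde e))=\dist(x,x')$ together with the separation property $\dist(\tilde e,\tilde e')>l$ for every $\tilde e'\in p^{-1}(x')\setminus\{\alpha(\tilde e)\}$. The strategy is to use the upper bound on $\dist(e,e')$ to force $e'$ into the ``close'' fiber element.

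For the first bullet, fix $e,e'\in E_0$ with $\dist(e,e')\leq l$ and set $x=p(e)$, $x'=p(e')$. Since $p$ is a morphism of digraphs, inequality \eqref{eq:dist_of_images} gives $\dist(x,x')\leq \dist(e,e')\leq l$, so condition (4) applies and supplies the bijection $\alpha$ for the pair $(x,x')$. Evaluating it at $\tilde e=e$, the separation clause rules out any choice of target other than $\alpha(e)$, because $\dist(e,e')\leq l$; hence $e'=\alpha(e)$, and therefore $\dist(e,e')=\dist(e,\alpha(e))=\dist(x,x')=\dist(p(e),p(e'))$.

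The second bullet is then immediate: if $e,e'\in p^{-1}(x)$ were distinct with $\dist(e,e')\leq l$, the first bullet would give $\dist(e,e')=\dist(x,x)=0$, forcing $e=e'$ and contradicting the assumption. So distinct elements of the same fiber must satisfy $\dist(e,e')>l$.

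I do not anticipate any substantive obstacle; the corollary is in essence a restatement of characterization (4) once one uses the distance-non-increasing property \eqref{eq:dist_of_images} to ensure that the hypothesis $\dist(x,x')\leq l$ required for (4) is actually available.
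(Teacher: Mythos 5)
Your proof is correct and matches the paper's intended derivation: the corollary is stated without proof precisely because it is meant to follow from characterization (4) of Proposition \ref{prop:covering_description1} together with the distance-non-increasing property \eqref{eq:dist_of_images}, which is exactly the argument you give. Both bullets are handled soundly, including the observation that $\dist(e,e')=0$ forces $e=e'$.
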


\begin{corollary}\label{cor:l-path_lifting}
Let $p:E\to X$ be an $l$-covering of digraphs and $(x_0,\dots,x_m)$ be an accessible sequence in $X$ such that $\dist( x_i,x_{i+1} )\leq l.$ Then for any $0\leq k\leq m$ and $e\in p^{-1}(x_k)$ there exists a unique accessible sequence $(e_0,\dots,e_m)$ in $E$ such that $ \dist(e_i,e_{i+1})\leq l,$ $p(e_i)=x_i$ for any $0\leq i\leq m$ and $e_k=e.$ Moreover, in this case $\dist( e_i,e_{i+1} )= \dist(x_i,x_{i+1}).$
\end{corollary}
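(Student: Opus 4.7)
The plan is to reduce the statement to the path-lifting lemma for $1$-coverings (Lemma \ref{lemma:path_lifting}) applied to the induced $1$-covering $D_l(p):D_l(E)\to D_l(X)$, together with Corollary \ref{cor:dist=dist} for the ``moreover'' part.

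First I would observe that the hypothesis $\dist(x_i,x_{i+1})\leq l$ precisely says that each pair $(x_i,x_{i+1})$ is an arrow in $D_l(X)$. Hence the accessible sequence $(x_0,\dots,x_m)$ in $X$ is, after relabeling, a path $(\gamma_1,\dots,\gamma_m)$ in the quiver $Q(D_l(X))$ whose associated vertex sequence is $(x_0,\dots,x_m)$. Since $p$ is an $l$-covering, by definition $D_l(p):D_l(E)\to D_l(X)$ is a $1$-covering of digraphs, so $Q(D_l(p))$ is a covering of quivers, and moreover $D_l(p)^{-1}(x_k)=p^{-1}(x_k)$ as sets of vertices.

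Next I would apply Lemma \ref{lemma:path_lifting} to this covering quiver at the index $k$ with the chosen lift $e\in p^{-1}(x_k)$. This produces a unique lifted path $(\tilde\gamma_1,\dots,\tilde\gamma_m)$ in $Q(D_l(E))$ with associated vertex sequence $(e_0,\dots,e_m)$ such that $p(e_i)=x_i$, $e_k=e$, and $(e_i,e_{i+1})\in D_l(E)_1$, i.e.\ $\dist_E(e_i,e_{i+1})\leq l$ for every $i$. Thus $(e_0,\dots,e_m)$ is an accessible sequence in $E$ with the required properties. Uniqueness of such a sequence in $E$ is immediate from the uniqueness clause of Lemma \ref{lemma:path_lifting}, because any accessible sequence $(e_0,\dots,e_m)$ in $E$ with $\dist_E(e_i,e_{i+1})\leq l$ is itself a path in $D_l(E)$, and so must coincide with $(\tilde\gamma_1,\dots,\tilde\gamma_m)$.

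Finally, for the ``moreover'' part, I would invoke Corollary \ref{cor:dist=dist}: because $\dist_E(e_i,e_{i+1})\leq l$, the first bullet of that corollary gives
\[
\dist_X(x_i,x_{i+1}) \;=\; \dist_X(p(e_i),p(e_{i+1})) \;=\; \dist_E(e_i,e_{i+1}).
\]
There is no real obstacle here beyond correctly matching the data: the only thing one must be careful about is that one is lifting inside $D_l(E)$ rather than $E$, and then translating the conclusion back to an accessible sequence in $E$ using the tautology $D_l(E)_1 = \{(e,e')\mid \dist_E(e,e')\leq l\}$.
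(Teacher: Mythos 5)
Your proposal is correct and follows exactly the route the paper intends for this corollary (which it states without proof): reduce to the $1$-covering $D_l(p):D_l(E)\to D_l(X)$, apply the path-lifting Lemma \ref{lemma:path_lifting} to the associated covering quiver, and deduce the equality of distances from the first bullet of Corollary \ref{cor:dist=dist}. The bookkeeping points you flag (identifying the accessible sequence with a path in $D_l(X)$, and translating the lifted path back via $D_l(E)_1=\{(e,e')\mid \dist_E(e,e')\leq l\}$) are exactly the right ones and are handled correctly.
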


\subsection{Relation to covering  simplicial sets}

In this section we will use theory of covering simplicial sets \cite[Appendix I]{gabriel2012calculus}. We denote by $\Delta$ the category of non-empty finite ordinals and  by $\Delta^m$ the standard $m$-simplex $\Delta^m=\Delta(-,[m]).$ For $0\leq k\leq m$ we consider a map $i_k:[0]\to [m]$ that sends $0$ to $k.$ We use the same notation for the induced map $i_k:\Delta^0 \to \Delta^m.$ A morphism of simplicial sets $ \mathpzc{p}: \mathpzc{E}\to \mathpzc{X}$ is called covering, if for any diagram
\begin{equation}
\begin{tikzcd}
\Delta^0 \ar[d,"i_k"] \ar[r] & \mathpzc{E} \ar[d,"\mathpzc{p}"] \\
\Delta^m \ar[r] & \mathpzc{X}
\end{tikzcd}
\end{equation}
(with any $0\leq k\leq m$)
there exists a unique morphism $\Delta^m\to \mathpzc{E}$ such that the diagram is commutative. 

\begin{proposition}\label{prop:equivalent_l-covering}
The following properties of a morphism of digraphs $p:E\to X$ are equivalent.
\begin{enumerate}
\item  $p:E\to X$ is an $l$-covering.
\item $\SS^n(p):\SS^n(E)\to \SS^n(X)$ is a covering of simplicial sets for any $1\leq n\leq l.$ 
\item $\SS^l(p): \SS^l(E)\to \SS^l(X)$ is a covering of simplicial sets and $\SS^1(E)=\SS^l(p)^{-1}(\SS^1(X)).$
\end{enumerate}
\end{proposition}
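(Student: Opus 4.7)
The plan is to establish the cyclic chain of implications $(1)\Rightarrow(2)\Rightarrow(3)\Rightarrow(1)$, leveraging Corollary \ref{cor:l-path_lifting} for the first implication and Proposition \ref{prop:covering_description1} (specifically condition (2)) to close the loop.

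For $(1)\Rightarrow(2)$, fix $n$ with $1\leq n\leq l$ and an $m$-simplex $\sigma=(x_0,\dots,x_m)$ of $\SS^n(X)$ together with a lift $e\in p^{-1}(x_k)$ of its $k$-th vertex. Since ${\sf L}(\sigma)\leq n\leq l$, each $\dist(x_i,x_{i+1})\leq l$, so Corollary \ref{cor:l-path_lifting} produces a unique accessible lift $(e_0,\dots,e_m)$ with $e_k=e$, $\dist(e_i,e_{i+1})\leq l$, and $\dist(e_i,e_{i+1})=\dist(x_i,x_{i+1})$; the last equality guarantees ${\sf L}((e_0,\dots,e_m))={\sf L}(\sigma)\leq n$, so the lift lies in $\SS^n(E)$. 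Uniqueness inside $\SS^n(E)$ follows because any lift there automatically satisfies the hypothesis of Corollary \ref{cor:l-path_lifting}.

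For $(2)\Rightarrow(3)$, the first assertion is the case $n=l$. The inclusion $\SS^1(E)\subseteq \SS^l(p)^{-1}(\SS^1(X))$ is immediate from \eqref{eq:dist_of_images}. For the reverse inclusion take $\sigma\in \SS^l(E)$ with $p(\sigma)\in \SS^1(X)$, let $e$ be its initial vertex, and use the covering property of $\SS^1(p)$ to obtain a unique $\sigma'\in \SS^1(E)$ lifting $p(\sigma)$ with initial vertex $e$. Then $\sigma$ and $\sigma'$ are both lifts of $p(\sigma)$ through the covering $\SS^l(p)$ sharing the same initial vertex, so the uniqueness part of the simplicial covering property forces $\sigma=\sigma'\in \SS^1(E)$.

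For $(3)\Rightarrow(1)$, I verify condition (2) of Proposition \ref{prop:covering_description1}. Since $D_l(E)_1=\SS^l(E)_1$ and $D_l(X)_1=\SS^l(X)_1$, the $1$-covering property of $D_l(p)$ is precisely the $1$-simplex lifting property of the covering $\SS^l(p)$. The identity $E=D_l(p)^{-1}(X)$ is exactly the equation $\SS^1(E)_1=\SS^l(p)^{-1}(\SS^1(X))_1$ in dimension one, which is part of the assumed identity of simplicial subsets (in dimension zero both sides are $E_0$).

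The main obstacle I anticipate is the reverse inclusion in $(2)\Rightarrow(3)$: one needs to argue that a lift which lives only in $\SS^l(E)$ must actually live in $\SS^1(E)$, and doing so directly from distance inequalities is awkward. The clean way out is to produce an a priori short lift using the covering property at level $1$ and then compare with the original lift via uniqueness at level $l$, which is the argument sketched above.
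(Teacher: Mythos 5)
Your proposal is correct and follows essentially the same route as the paper: the cyclic chain $(1)\Rightarrow(2)\Rightarrow(3)\Rightarrow(1)$, with the lift in the first implication produced by the $l$-covering path-lifting property (the paper invokes Lemma \ref{lemma:path_lifting} together with Corollary \ref{cor:dist=dist}, which is exactly the content of Corollary \ref{cor:l-path_lifting} that you cite), the reverse inclusion in $(2)\Rightarrow(3)$ obtained by comparing a level-$1$ lift with the given level-$l$ lift via uniqueness, and $(3)\Rightarrow(1)$ reduced to condition (2) of Proposition \ref{prop:covering_description1} using $D_n(X)_1=\SS^n(X)_1$. No gaps.
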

\begin{proof}
$(1) \Rightarrow (2).$ It is sufficient to prove that for any natural $m\geq 1,$ any $0\leq k\leq m,$ any $(x_0,\dots,x_m)\in \SS^n(X)_m$ and any $e\in p^{-1}(x_k)$ there exists a unique $(e_0,\dots,e_m)\in \SS^n(E)_m$ such that 
$p(e_i)=x_i$ 
and $e_k=e.$ Since $\sum_{i=0}^{m-1} \dist(x_i,x_{i+1})\leq n,$ we have $\dist(x_i,x_{i+1})\leq n$ for each $0\leq i< m,$ and hence, $(x_0,\dots,x_m)$ is a path in $D_n(X).$ 
Using that $D_n(E)\to D_n(X)$ is a covering, by Lemma \ref{lemma:path_lifting} we obtain that there exists a unique path $(e_0,\dots,e_n)$ in $D_n(E)$ such that $e_k=e$ and $p(e_i)=x_i.$ 
Since $\dist(e_i,e_{i+1})\leq n,$ by Corollary \ref{cor:dist=dist} we obtain that $\dist( e_i,e_{i+1})=\dist(x_i,x_{i+1}).$ Hence $(e_0,\dots,e_m)\in \SS^n(E)_m.$ The uniqueness of the lifting to $\SS^n(E)_m$  follows from the uniqueness of the lifting to $D_n(E).$

$(2)\Rightarrow (3).$ By the assumption $D_l(X)$ is a covering. Prove that  $\SS^1(E)=(\SS^l(p))^{-1}(\SS^1(X)).$ Take $(e_0,\dots,e_m)\in \SS^l(E)_m$ such that $(p(e_0),\dots,p(e_m))\in \SS^1(X)_m.$ Since $\SS^1(p)$ is a covering and $(e_0)\in \SS^1(E)_0$, there exists a unique $(e'_0,\dots,e'_m)\in \SS^1(E)_m$ such that $e'_0=e_0$ and $p(e'_i)=p(e_i).$ Using that $\SS^l(p)$ is a covering, we obtain $(e_0,\dots,e_m) = (e'_0,\dots,e'_m).$ Therefore, $(e_0,\dots,e_m)\in \SS^1(E)_m.$ Hence $\SS^1(E)=(\SS^l(p))^{-1}(\SS^1(X)).$

$(3)\Rightarrow (1).$ Note that 
$D_n(X)_1=\SS^n(X)_1,$ 
 $D_n(E)_1=\SS^n(E)_1.$ Then the fact that $\SS^l(p)$ is a covering implies that $D_l(p)$ is a $1$-covering and the fact that $\SS^1(E)=(\SS^l(p))^{-1}( \SS^1(X))$ implies that $E=D_l(p)^{-1}(X).$ Then Proposition \ref{prop:covering_description1} implies that $p$ is an $l$-covering. 
\end{proof}

\subsection{Covering groupoids}

Here we remind some aspects of the theory of covering groupoids that can be found in \cite[Appendix I]{gabriel2012calculus},  \cite{may1999concise} and \cite{brown2006topology}. We will also give some well known results with proofs because it is difficult to find a precise reference.

Any groupoid (as well as any category) can be treated as a quiver, if we forget the composition map. We say that a morphism of groupoids $p: \mathpzc{H}\to \mathpzc{G}$ is a covering, if it is a covering  quiver. 
\begin{example}\label{ex:F(p)-covering}
If $p:E\to Q$ is a covering quiver, then the morphism between the free groupoids $F(p):F(E)\to F(Q)$ is a covering groupoid.
\end{example}
For a groupoid $\mathpzc{G}$ we denote by ${\sf Cov}(\mathpzc{G})$ the full subcategory of ${\sf Grpd}\! \downarrow\! \mathpzc{G}$ consisting of coverings of the groupoid $\mathpzc{G}$. The category of coverings of a groupoid is equivalent to the category of $\mathpzc{G}$-presheaves 
\begin{equation}\label{eq:coverings_grpd}
{\sf Cov}(\mathpzc{G})\simeq {\sf Set}^{\mathpzc{G}^{\sf op}}    
\end{equation}
\cite[Appendix I.1.2]{gabriel2012calculus}. The equivalence 
$ {\sf f} :{\sf Cov}(\mathpzc{G}) \to {\sf Set}^{\mathpzc{G}^{\sf op}}$ is defined so that ${\sf f}(p)(g) = p^{-1}(g)$ for $g\in {\sf Ob}(\mathpzc{G})$ and for a morphism $\gamma:g'\to g$ in $\mathpzc{G}$ the map ${\sf f}(p)(\gamma) : p^{-1}(g)\to p^{-1}(g')$ is defined by the unique lifting property: $h'={\sf f}(p)(\gamma)(h)$ is the only object such that there exists a morphism $\tilde \gamma: h'\to h$ with $p( \tilde \gamma)=\gamma.$ We will use the following notation 
\begin{equation}
h\gamma :=  {\sf f}(p)(\gamma)(h).
\end{equation}

For an object $g_0$ of $\mathpzc{G}$  we set $\pi_1(\mathpzc{G},g_0)=\mathpzc{G}(g_0,g_0).$ We can treat $\pi_1(\mathpzc{G},g_0)$ as a groupoid with one object $g_0.$ Then the restriction of ${\sf f}(p)$ on $\pi_1(\mathpzc{G},g_0)$ defines a right $\pi_1(\mathpzc{G},g_0)$-set ${\sf f}_{g_0}(p)=p^{-1}(g_0).$

For a group $G$ we denote by $G\text{-}{\sf Set}$ the category of right $G$-sets. Note that, if we treat a group as a groupoid with one object, then $G\text{-}{\sf Set}$ is isomorphic to the category of $G$-presheaves.

\begin{proposition}\label{prop:G-set-grpd}
If $\mathpzc{G}$ is a connected groupoid and $g_0$ is its object, then there is an equivalence of categories
\begin{equation}\label{eq:cov-grpd-eq}
{\sf Cov}( \mathpzc{G} ) \simeq \pi_1(\mathpzc{G},g_0)\text{-}{\sf Set},
\end{equation}
that sends $p$ to ${\sf f}_{g_0}(p).$ Moreover, this equivalence can be restricted to an equivalence between the category of connected covering groupoids and the category of transitive $\pi_1(\mathpzc{G},g_0)$-sets
\begin{equation}\label{eq:equivalence:cov-conn}
{\sf Cov}^{\sf conn}( \mathpzc{G} ) \simeq \pi_1(\mathpzc{G},g_0)\text{-}{\sf Set}^{\sf trans}. 
\end{equation}
\end{proposition}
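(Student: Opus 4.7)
The plan is to build the desired equivalence by composing the already-stated equivalence ${\sf Cov}(\mathpzc{G})\simeq {\sf Set}^{\mathpzc{G}^{\sf op}}$ of \eqref{eq:coverings_grpd} with a restriction-of-presheaves functor coming from the inclusion of $\pi_1(\mathpzc{G},g_0)$ (viewed as a one-object groupoid) into $\mathpzc{G}$. Since $\mathpzc{G}$ is connected, for every object $g$ we can pick a morphism $\gamma_g: g_0 \to g$ in $\mathpzc{G}$ with $\gamma_{g_0}={\sf id}_{g_0}$. The inclusion $F:\pi_1(\mathpzc{G},g_0)\hookrightarrow \mathpzc{G}$ sending the unique object to $g_0$ is fully faithful by the definition of $\pi_1$ and essentially surjective by the choice of $\gamma_g$, so $F$ is an equivalence of groupoids. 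Consequently the restriction functor $F^*:{\sf Set}^{\mathpzc{G}^{\sf op}}\to {\sf Set}^{\pi_1(\mathpzc{G},g_0)^{\sf op}}= \pi_1(\mathpzc{G},g_0)\text{-}{\sf Set}$ is an equivalence.

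Composing ${\sf f}$ with $F^*$ gives the equivalence \eqref{eq:cov-grpd-eq}. To verify that this composite sends $p$ to ${\sf f}_{g_0}(p)$, I would simply unwind the definitions: $F^*({\sf f}(p))$ evaluated at the unique object of $\pi_1(\mathpzc{G},g_0)$ is ${\sf f}(p)(g_0)=p^{-1}(g_0)$, and its right action by $\gamma\in \pi_1(\mathpzc{G},g_0)$ is the map $h\mapsto h\gamma$, which is the definition of ${\sf f}_{g_0}(p)$.

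For the second part I need to show that under this equivalence, connected covering groupoids correspond exactly to transitive $\pi_1(\mathpzc{G},g_0)$-sets. Assume first that $\mathpzc{H}$ is connected. Given $h,h'\in p^{-1}(g_0)$, connectedness yields a morphism $\tilde\gamma: h'\to h$ in $\mathpzc{H}$; setting $\gamma=p(\tilde\gamma)\in \pi_1(\mathpzc{G},g_0)$, the uniqueness in the definition of the action gives $h\gamma=h'$, so $p^{-1}(g_0)$ is a single orbit. Conversely, assume $p^{-1}(g_0)$ is transitive. For any two vertices $h''\in p^{-1}(g)$ and $h'''\in p^{-1}(g')$, pull back along $\gamma_g$ and $\gamma_{g'}$ (using the covering property to lift these morphisms) to get objects $h,h'\in p^{-1}(g_0)$ each connected in $\mathpzc{H}$ to $h''$ and $h'''$ respectively; then transitivity connects $h$ to $h'$, so $\mathpzc{H}$ is connected.

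The main obstacle, such as it is, is purely bookkeeping: one must be careful about the direction of the $\pi_1$-action (the equivalence \eqref{eq:coverings_grpd} produces \emph{right} actions because of the contravariance built into ${\sf f}(p)(\gamma):p^{-1}(g)\to p^{-1}(g')$), and about checking that the restriction functor $F^*$ really produces the same action as ${\sf f}_{g_0}$. Everything else reduces to the general principle that an equivalence of small categories induces an equivalence of their presheaf categories, together with the path-lifting property that is already encoded in the definition of a covering groupoid.
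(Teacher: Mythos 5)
Your argument for the first equivalence is essentially the paper's: both reduce to the fact that the inclusion $\pi_1(\mathpzc{G},g_0)\hookrightarrow \mathpzc{G}$ is an equivalence of categories when $\mathpzc{G}$ is connected, so restriction of presheaves along it is an equivalence, which is then composed with \eqref{eq:coverings_grpd}; your extra unwinding that the composite really is $p\mapsto {\sf f}_{g_0}(p)$ is a welcome check that the paper leaves implicit. Where you genuinely diverge is the second claim. The paper argues formally: coproducts in both ${\sf Cov}(\mathpzc{G})$ and $\pi_1(\mathpzc{G},g_0)\text{-}{\sf Set}$ are disjoint unions, connected coverings and transitive $G$-sets are exactly the indecomposable objects with respect to disjoint union, and an equivalence matches indecomposables with indecomposables. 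You instead prove the correspondence by hand: connectedness of $\mathpzc{H}$ gives a morphism $\tilde\gamma\colon h'\to h$ over some $\gamma\in\pi_1(\mathpzc{G},g_0)$, whence $h\gamma=h'$ by the uniqueness clause in the definition of the action, so the fibre is a single orbit; conversely, transitivity of the fibre over $g_0$ plus unique lifting of the chosen morphisms $\gamma_g\colon g_0\to g$ connects any two objects of $\mathpzc{H}$. Both arguments are correct. The paper's version is shorter and transfers verbatim to any equivalence preserving coproducts, while yours makes the mechanism explicit and is the argument one actually needs later (e.g.\ in Lemma \ref{lemma:stab} and Proposition \ref{prop:fibers_of_universal}) when identifying fibres with coset spaces; it also makes visible exactly where the covering (unique lifting) hypothesis enters, which the indecomposability argument hides inside the equivalence. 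The only point to keep straight, as you note, is the variance convention making the fibre a right $\pi_1(\mathpzc{G},g_0)$-set, and you handle it consistently.
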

\begin{proof}
Since $\pi_1(\mathpzc{G},g_0) \hookrightarrow \mathpzc{G}$ is an equivalence of categories, the restriction functor ${\sf Set}^{\mathpzc{G}^{\sf op}} \to {\sf Set}^{\pi_1(\mathpzc{G},g_0)^{\sf op}} = \pi_1(\mathpzc{G},g_0)\text{-}{\sf Set}$ is also an equivalence. The equivalence \eqref{eq:cov-grpd-eq} follows. In both categories ${\sf Cov}( \mathpzc{G} )$ and $\pi_1(\mathpzc{G}) \text{-}{\sf Set}$ coproducts are defined by disjoint unions. So, the disjoint union in ${\sf Cov}(\mathpzc{G})$ correspond the disjoint union in $\pi_1(\mathpzc{G}) \text{-}{\sf Set}.$ Connected coverings are coverings indecomposable to disjoint union of nontrivial coverings, and transitive $\pi_1(\mathpzc{G})$-sets are also indecomposable $\pi_1(\mathpzc{G})$-sets into disjoint union. Then they correspond to each other under this equivalence.
\end{proof}

A pointed groupoid is a couple $(\mathpzc{G},g_0),$ where $\mathpzc{G}$ is a groupoid and $g_0$ is its object, which is called the basepoint. By abuse of notation we set $\mathpzc{G}=(\mathpzc{G},g_0)$ and $\pi_1(\mathpzc{G})=\pi_1(\mathpzc{G},g_0).$ A morphism of pointed groupoids is a morphism of groupoids that takes the basepoint to the basepoint. The category of pointed groupoids is denoted by ${\sf Grpd}_*.$ A pointed covering is a morphism of pointed groupoids, which is a covering. The category of pointed coverings ${\sf Cov}_*(\mathpzc{G})$ is a full subcategory in the comma category ${\sf Grpd}_*\! \downarrow \! \mathpzc{G},$ whose objects are pointed coverings. The category of connected pointed coverings is denoted by ${\sf Cov}_*^{\sf conn}(\mathpzc{G}).$

\begin{lemma}\label{lemma:stab}
Let $p: \mathpzc{H}\to \mathpzc{G}$ be a pointed covering of $\mathpzc{G}.$ Then the morphism $p_*: \pi_1( \mathpzc{H})\to \pi_1(\mathpzc{G}) $ is injective and the image is equal to the stabilizer of the object $h_0$ with respect to the action of $\pi_1( \mathpzc{G})$ on $p^{-1}(g_0)$
\[ 
p_*( \pi_1( \mathpzc{H}) ) = {\sf Stab}(\pi_1( \mathpzc{G}), h_0).
\]
\end{lemma}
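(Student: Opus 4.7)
The plan is to reduce both assertions to the defining unique lifting property of a covering of quivers (applied to the underlying quivers of $\mathpzc{H}$ and $\mathpzc{G}$), combined with the explicit description of the action of $\pi_1(\mathpzc{G})$ on $p^{-1}(g_0)$ spelled out above via $h\gamma := {\sf f}(p)(\gamma)(h).$ The key observation in both parts is that a morphism in $\mathpzc{H}$ is determined uniquely by its image in $\mathpzc{G}$ together with a choice of one of its endpoints.

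For injectivity, I would take $\tilde \gamma \in \pi_1(\mathpzc{H}) = \mathpzc{H}(h_0,h_0)$ with $p(\tilde \gamma) = 1_{g_0}.$ Then $\tilde \gamma$ is a lift of $1_{g_0}$ whose tail is $h_0.$ On the other hand, the degenerate loop $1_{h_0}$ is also such a lift. By the unique lifting property that characterizes the covering $p,$ applied to the arrow $1_{g_0}$ of $\mathpzc{G}$ and the chosen lift $h_0$ of its tail, we conclude $\tilde \gamma = 1_{h_0}.$

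For the identification of the image with the stabilizer I would argue by two inclusions. If $\gamma = p_*(\tilde \gamma)$ for some $\tilde \gamma \in \pi_1(\mathpzc{H}),$ then $\tilde \gamma$ is a morphism $h_0 \to h_0$ in $\mathpzc{H}$ lifting $\gamma,$ so by the defining formula $h_0\gamma = h_0,$ which places $\gamma$ in ${\sf Stab}(\pi_1(\mathpzc{G}),h_0).$ Conversely, if $\gamma \in \pi_1(\mathpzc{G})$ satisfies $h_0\gamma = h_0,$ then unwinding the definition of ${\sf f}(p)(\gamma)$ we obtain a (unique) morphism $\tilde \gamma : h_0 \to h_0$ in $\mathpzc{H}$ with $p(\tilde \gamma) = \gamma,$ so $\gamma \in p_*(\pi_1(\mathpzc{H})).$

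There is no real obstacle beyond bookkeeping: the main thing to handle carefully is unpacking the definition of $h\gamma$ and noting that the lift $\tilde\gamma$ whose existence it asserts is automatically a loop at $h_0$ precisely when $h_0\gamma = h_0.$ Once this is spelled out, both statements are immediate consequences of the unique path-lifting property of the quiver covering underlying $p.$
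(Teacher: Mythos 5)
Your proof is correct and follows essentially the same route as the paper's: both the injectivity and the identification of the image with the stabilizer are read off from the unique lifting property of the underlying quiver covering together with the definition of the action $h\gamma={\sf f}(p)(\gamma)(h)$. The only cosmetic difference is that you get injectivity by showing the kernel is trivial (lifting $1_{g_0}$ with tail $h_0$), whereas the paper invokes directly the uniqueness of the lift $\tilde\gamma\colon h_0\gamma\to h_0$ of an arbitrary $\gamma$.
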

\begin{proof}
Since $\tilde \gamma: h_0\gamma \to h_0$ from the definition of the $\pi_1(\mathpzc{G})$-action on ${\sf f}_{g_0}$ is unique, we obtain that the map $p_*:\pi_1( \mathpzc{H}) \to \pi_1( \mathpzc{G})$ is injective. We can also note that $\tilde \gamma \in \pi_1(\mathpzc{H})$ if and only if $ h_0\gamma= h_0.$ The formula with the stabiliser follows. 
\end{proof}

For a group $G$ we denote by ${\sf Sub}(G)$ the category, whose objects are subgroups of $G$ and morphisms are embeddings. 

\begin{proposition}\label{prop:galua-grpd} Let $\mathpzc{G}$ be a pointed connected groupoid. Then there is an equivalence of categories 
\begin{equation}
{\sf Cov}_*^{\sf conn}( \mathpzc{G} ) \simeq {\sf Sub}( \pi_1( \mathpzc{G}) )
\end{equation}
that sends a pointed covering $p:\mathpzc{H}\to \mathpzc{G}$ to the image of $p_*:\pi_1(\mathpzc{H}) \to \pi_1(\mathpzc{G}).$ Moreover, for any pointed connected covering $p$ there is an isomorphism ${\sf f}_{g_0}(p)\cong \pi_1(\mathpzc{G})/p_*(\pi_1(\mathpzc{H}))$ of $\pi_1(\mathpzc{G})$-sets. 
\end{proposition}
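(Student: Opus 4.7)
The plan is to reduce the proposition to the classical orbit--stabilizer classification of pointed transitive $G$-sets. Setting $G := \pi_1(\mathpzc{G})$, Proposition \ref{prop:G-set-grpd} already gives an equivalence ${\sf Cov}^{\sf conn}(\mathpzc{G}) \simeq G\text{-}{\sf Set}^{\sf trans}$ via $p\mapsto {\sf f}_{g_0}(p) = p^{-1}(g_0)$. A pointed connected covering $(p, h_0)$ provides a distinguished element $h_0$ in the fibre $p^{-1}(g_0)$, and morphisms of pointed coverings are precisely those morphisms of coverings that respect this element. So I would first observe that the equivalence lifts naturally to pointed versions, giving ${\sf Cov}_*^{\sf conn}(\mathpzc{G}) \simeq G\text{-}{\sf Set}_*^{\sf trans}$, and then reduce the problem to establishing an equivalence $G\text{-}{\sf Set}_*^{\sf trans} \simeq {\sf Sub}(G)$.

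For the latter, I would send a pointed transitive $G$-set $(S, s_0)$ to its stabilizer ${\sf Stab}(G, s_0) \subseteq G$, and in the reverse direction send a subgroup $H \subseteq G$ to the coset space $G/H$ pointed at the trivial coset $eH$. These are quasi-inverse, since any pointed transitive $(S, s_0)$ is isomorphic to $(G/{\sf Stab}(G, s_0), eH)$ via the canonical bijection $gH \mapsto s_0 \cdot g$. The key point that makes this work at the level of categories rather than just objects is the rigidity of pointed transitive $G$-sets: a $G$-equivariant map $(S, s_0) \to (S', s_0')$ exists if and only if ${\sf Stab}(G, s_0) \subseteq {\sf Stab}(G, s_0')$, and when it exists it is uniquely determined by $s_0 \mapsto s_0'$. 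This matches the hom-sets of ${\sf Sub}(G)$, each of which is either empty or a single inclusion.

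Composing the two equivalences, a pointed connected covering $(p, h_0)$ is sent to ${\sf Stab}(G, h_0)$, which by Lemma \ref{lemma:stab} coincides with $p_*(\pi_1(\mathpzc{H}))$, recovering the functor described in the statement. The supplementary claim ${\sf f}_{g_0}(p) \cong \pi_1(\mathpzc{G})/p_*(\pi_1(\mathpzc{H}))$ of $\pi_1(\mathpzc{G})$-sets is then obtained by applying the explicit quasi-inverse to $(p^{-1}(g_0), h_0)$, i.e.\ by invoking the orbit--stabilizer isomorphism. The main subtlety I would expect is bookkeeping of basepoints: one has to check that the pointed enhancement of the equivalence \eqref{eq:equivalence:cov-conn} is well-defined on morphisms, and that the quasi-inverse preserves basepoints. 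Both of these are formal once the correspondence of basepoints with distinguished fibre elements is made explicit, so I do not anticipate a genuinely hard step.
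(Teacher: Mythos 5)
Your proposal is correct and follows essentially the same route as the paper: pass to pointed transitive $\pi_1(\mathpzc{G})$-sets via the equivalence of Proposition \ref{prop:G-set-grpd}, observe that a pointed morphism of transitive $G$-sets is unique when it exists and exists precisely when the stabilizers are nested, and then identify the stabilizer of $h_0$ with $p_*(\pi_1(\mathpzc{H}))$ via Lemma \ref{lemma:stab}. The only nitpick is a left/right bookkeeping slip: since the actions here are right actions, the coset space should be $H\backslash G$ with $Hg\mapsto s_0\cdot g$ (your map $gH\mapsto s_0\cdot g$ is not well defined on left cosets), but this does not affect the argument.
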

\begin{proof}
The equivalence ${\sf f}_{g_0}$  \eqref{eq:equivalence:cov-conn} implies that there is an equivalence between the category ${\sf Cov}_*^{\sf conn}( \mathpzc{G} )$ and the category of transitive $\pi_1(\mathpzc{G})$-sets with a fixed basepoint $(\pi_1(\mathpzc{G})\text{-}{\sf Set}^{\sf trans})_*$. Objects in this category are couples $(S,s_0),$ where $S$ is a transitive $\pi_1(\mathpzc{G})$-set and $s_0\in S$ (we don't assume that the action preserves $s_0$). 
Morphisms of this category are morphisms of $\pi_1(\mathpzc{G})$-sets that preserve the basepoint. Any such a transitive $\pi_1(\mathpzc{G})$-set $S$ is isomorphic to $\pi_1(\mathpzc{G})/H,$ where $H={\sf Stab}_S(\pi_1(\mathpzc{G}),s_0).$ 
Since for any morphism $f:(S,s_0)\to (S',s_0')$ we have $f(s_0)=s'_0,$ we obtain that it can be only unique (because a morphism between transitive $G$-sets is defined by the value on one element) and it exists only if ${\sf Stab}_S(\pi_1(\mathpzc{G}),s_0)\subseteq {\sf Stab}_{S'}(\pi_1(\mathpzc{G}),s'_0).$ Therefore the functor $(\pi_1(\mathpzc{G})\text{-}{\sf Set}^{\sf trans})_* \to {\sf Sub}(\pi_1(\mathpzc{G})), (S,s_0)\mapsto {\sf Stab}_S(\pi_1(\mathpzc{G}),s_0)$ is an equivalence. Lemma \ref{lemma:stab} implies that the composite equivalence ${\sf Cov}_*^{\sf conn}(\mathpzc{G}) \to  {\sf Sub}(\pi_1(\mathpzc{G}))$ can be described as $(p:\mathpzc{H}\to \mathpzc{G} ) \mapsto p_*(\pi_1(\mathpzc{G})),$ and that ${\sf f}_{g_0}(p)\cong \pi_1(\mathpzc{G})/p_*(\pi_1(\mathpzc{H})).$
\end{proof}

\subsection{Category of {\it l}-covering digraphs}

For a simplicial set $\mathpzc{X}$ we denote by ${\sf Cov}(\mathpzc{X})$ the full subcategory of the comma category ${\sf SSet}\! \downarrow\! \mathpzc{X}$ consisting of the coverings of $\mathpzc{X}.$ The category of coverings of a simplicial set is equivalent to the category of of coverings of its fundamental groupoid ${\sf Cov}(\mathpzc{X}) \simeq {\sf Cov}( \Pi( \mathpzc{X} ) )$ 
 \cite[Appendix I.2.3]{gabriel2012calculus}. The equivalence ${\sf Cov}(\mathpzc{X}) \to {\sf Cov}( \Pi( \mathpzc{X} ) )$ is defined by taking the fundamental groupoid $\Pi.$ Therefore, for a simplicial set $\mathpzc{X}$ we have equivalences
\begin{equation} \label{eq:equivalence_coverings_ss}
{\sf Cov}(\mathpzc{X}) \simeq {\sf Cov}( \Pi( \mathpzc{X} ) ) \simeq {\sf Set}^{\Pi(\mathpzc{X})^{\sf op}}.
\end{equation}

For a digraph $X$ we denote by ${\sf Cov}_l(X)$ the full subcategory of the comma category ${\sf Digr}\! \downarrow\! X$ that consists of $l$-coverings.  

\begin{theorem} \label{theorem:coverings_equivalence}
For a digraph $X$ there are equivalences of categories
\begin{equation} 
{\sf Cov}_l(X) \simeq {\sf Cov}(\SS^l(X)) \simeq {\sf Cov}(\Pi^l(X))  \simeq {\sf Set}^{\Pi^l(X)^{\sf op}}
\end{equation} 
such that the following holds.
\begin{itemize}
\item The equivalence ${\sf Cov}_l(X) \to {\sf Cov}(\SS^l(X))$ is the functor that sends $p$ to $\SS^l(p)$ and other equivalences are described above \eqref{eq:equivalence_coverings_ss}, \eqref{eq:coverings_grpd}.
\item The composite functor 
\begin{equation*}
{\sf Fib} : {\sf Cov}_l(X)  \to {\sf Set}^{\Pi^l(X)^{\sf op}}    
\end{equation*}
is defined by ${\sf Fib}(p)(x)=p^{-1}(x)$ for any $x\in X_0;$ and for any arrow $(x',x)$ the map ${\sf Fib}(p)((x',x)):p^{-1}(x)\to p^{-1}(x')$ is defined by the unique lifting property from the definition of the $1$-covering.

\item The opposite composite functor 
\begin{equation*}
\Psi : {\sf Set}^{\Pi^l(X)^{\sf op}} \longrightarrow {\sf Cov}_l(X)    
\end{equation*}
is defined so that $\Psi(L)=(p: E\to X)$, where $E_0= \coprod_{x\in X_0} L(x);$  arrows of $E$ have the form $(L((x',x))(e),e)$ for an arrow  $(x',x)$ of $X$ and $e\in L(x).$ 
\end{itemize}
\end{theorem}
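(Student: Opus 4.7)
The second equivalence ${\sf Cov}(\SS^l(X)) \simeq {\sf Cov}(\Pi^l(X))$ and the third ${\sf Cov}(\Pi^l(X)) \simeq {\sf Set}^{\Pi^l(X)^{\sf op}}$ are the general facts \eqref{eq:equivalence_coverings_ss} and \eqref{eq:coverings_grpd} already cited from Gabriel--Zisman, so I would use them as a black box. The substantive content is the first equivalence ${\sf Cov}_l(X) \simeq {\sf Cov}(\SS^l(X))$, together with the explicit descriptions of the composite functors ${\sf Fib}$ and $\Psi$. My plan is to show that $\SS^l$ and $\Psi$ are inverse equivalences by direct construction; the descriptions of ${\sf Fib}$ and $\Psi$ will then be read off from tracking what the standard equivalences do.

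First I would note that Proposition \ref{prop:equivalent_l-covering} tells us $\SS^l$ restricts to a functor ${\sf Cov}_l(X) \to {\sf Cov}(\SS^l(X))$. Composing with the known equivalences produces the functor ${\sf Fib}$; unwinding the definitions: for $x\in X_0$ the fiber is literally $p^{-1}(x)$, and for an arrow $(x',x)\in X_1$ (which is a $1$-simplex of $\SS^l(X)$, hence a generator of $\Pi^l(X)$) the associated map $p^{-1}(x)\to p^{-1}(x')$ is, by definition of the equivalence ${\sf Cov}(\mathpzc{G})\simeq {\sf Set}^{\mathpzc{G}^{\sf op}}$, the unique lifting map of the $1$-covering $p$. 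This gives the stated formula for ${\sf Fib}$.

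Next I would show that $\Psi$ as defined lands in ${\sf Cov}_l(X)$. Given $L\in {\sf Set}^{\Pi^l(X)^{\sf op}}$, the projection $p:E\to X$ in $\Psi(L)$ is by construction a $1$-covering (each arrow of $X$ has unique lifts at both endpoints). To upgrade this to an $l$-covering I would verify condition (4) of Proposition \ref{prop:covering_description1}. For $x,x'\in X_0$ with $\dist(x,x')\le l$ the pair $(x,x')$ is a $1$-simplex of $\SS^l(X)$, hence an arrow of the quiver of $\Pi^l(X)$, and since $\Pi^l(X)$ is a groupoid, $L((x,x'))$ is a bijection $L(x)\to L(x')$. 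By Proposition \ref{prop:pi^l}, any two paths of length $\le l$ from $x$ to $x'$ represent the same morphism of $\Pi^l(X)$, so the iterated lift along such a path depends only on the endpoints; this gives the required $\alpha$. That $\dist(e,e')>l$ for any other $e'\in p^{-1}(x')$ follows since every path in $E$ projects to a path of the same length in $X$, so another element of $p^{-1}(x')$ within distance $l$ would give a second lift of some path, contradicting the $1$-covering property.

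Finally I would verify the two natural isomorphisms. For $\Psi\circ {\sf Fib}\cong {\sf Id}$, the digraph $\Psi({\sf Fib}(p))$ has vertex set $\coprod_x p^{-1}(x)=E_0$ and, for each $(x',x)\in X_1$ and $e\in p^{-1}(x)$, an arrow $({\sf Fib}(p)((x',x))(e),e)$, which by definition of ${\sf Fib}$ is the unique arrow of $E$ above $(x',x)$ ending at $e$; this recovers $E_1$. For ${\sf Fib}\circ\Psi\cong {\sf Id}$, direct inspection shows that the two presheaves agree on objects and on the generating arrows $(x',x)\in X_1$; since by Proposition \ref{prop:pi^l} these arrows generate $\Pi^l(X)$ as a groupoid, any two functors out of $\Pi^l(X)$ that coincide on them (and respect the defining relations, which both do automatically) are equal. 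The main obstacle is the verification that $\Psi(L)$ is an $l$-covering and not merely a $1$-covering: this is exactly where Proposition \ref{prop:pi^l} is used in an essential way, translating the relations \eqref{eq:rel_l} of the $l$-fundamental groupoid into the statement that parallel paths of length $\le l$ in $X$ induce the same bijection of fibers in $\Psi(L)$.
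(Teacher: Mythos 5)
Your proposal is correct and follows essentially the same route as the paper: both reduce everything to showing that $\Psi(L)$ is an $l$-covering and that $\Psi$ and ${\sf Fib}$ are mutually inverse, with the key point being that the relations \eqref{eq:rel_l} in $\Pi^l(X)$ force lifts of paths of length $\leq l$ to depend only on their endpoints (the paper verifies criterion (3) of Proposition \ref{prop:covering_description1} where you verify criterion (4), an immaterial difference). The only quibble is your phrase ``contradicting the $1$-covering property'' in the $\dist(e,e')>l$ step --- the actual contradiction is with the endpoint-independence you had just established via Proposition \ref{prop:pi^l}, not with unique lifting of a single arrow --- but the correct justification is already present in the preceding clause.
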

\begin{proof}
It is easy to see that the composition of the functor $(-)^l: {\sf Cov}_l(X)\to {\sf Cov}(\SS^l(X))$ with the equivalence ${\sf Cov}(\SS^l(X))\simeq {\sf Cov}(\Pi^l(X)) \simeq  {\sf Set}^{\Pi^l(X)^{\sf op}}$ is the functor ${\sf Fib}: {\sf Cov}_l(X) \to {\sf Set}^{\Pi^l(X)^{\sf op}}$ described in the statement. Therefore, it is sufficient to prove that the functor $\Psi:{\sf Set}^{\Pi^l(X)^{\sf op}} \to {\sf Cov}_l(X)$ is well defined and $\Psi \circ {\sf Fib} \cong {\sf Id}$ and $ {\sf Fib}\circ \Psi \cong  {\sf Id}.$

We need to prove that the morphism of digraphs $\Psi(L)=(p:E\to X)$ defined in the statement is an $l$-covering. Note that $p^{-1}(x)=L(x)$ and by the definition for any $(x,x')\in X_1$ we have a bijection $\alpha_{x,x'} := L((x,x'))^{-1} :p^{-1}(x)\to p^{-1}(x')$ such that $\alpha_{x,x'}(e)\in p^{-1}(x)$ is a unique vertex such that $(e,\alpha_{x,x'}(e))\in E_1.$ Then $p$ is a $1$-covering. Assume that we have two paths $(e_0,\dots,e_n)$ and $(e'_0,\dots,e'_m)$ in $E$ such that $n,m\leq l$ and $e_n=e'_m, p(e_0)=p(e'_0).$ Set $x_i=p(e_i), x_i'=p(e'_i).$ Then, using the relation \eqref{eq:rel_l}, we obtain 
\begin{equation}
\begin{split}
e_0 &= L((x_0,x_1)\cdot {\dots} \cdot (x_{n-1},x_n))(e_n) \\
& = L((x'_0,x'_1)\cdot {\dots} \cdot (x'_{m-1},x'_m))(e'_m) = e'_0.
\end{split}
\end{equation}
Similarly we prove that if we have two paths $(e_0,\dots,e_n)$ and $(e'_0,\dots,e'_m)$ in $E$ such that $n,m\leq l$ and $e_0=e'_0, p(e_n)=p(e'_m),$ then $e_n=e'_m.$ Therefore, by Proposition \ref{prop:covering_description1} $p$ is an $l$-covering. The isomorphisms ${\sf Fib} \circ \Psi \cong  {\sf Id}$ and $\Psi\circ {\sf Fib} \cong {\sf Id}$ are straightforward. 
\end{proof}

A pointed digraph is a couple $(X,x_0),$ where $X$ is a digraph and $x_0$ is its vertex.  A morphism of pointed digraphs $f:(X,x_0)\to (Y,y_0)$ is a morphism of digraphs such that $f(x_0)=y_0.$ This defines a category of pointed digraphs ${\sf Digr}_*.$ By abuse of notation we will denote $X=(X,x_0)$ and set
\begin{equation}
\pi^l_1(X) = \pi_1( \Pi^l(X),x_0 ). 
\end{equation}

\begin{corollary}\label{cor:G-sets-digr}
If $X$ is a connected pointed digraph, then there is an equivalence of categories
\begin{equation}
{\sf Cov}_l(X) \simeq \pi_1^l(X)\text{-}{\sf Set}
\end{equation}
that sends $p$ to $p^{-1}(x_0).$
\end{corollary}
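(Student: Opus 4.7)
The plan is to combine Theorem \ref{theorem:coverings_equivalence} with Proposition \ref{prop:G-set-grpd} in the obvious way. Theorem \ref{theorem:coverings_equivalence} supplies an equivalence ${\sf Cov}_l(X) \simeq {\sf Cov}(\Pi^l(X))$ sending an $l$-covering $p$ to the $\Pi^l(X)$-set whose value at a vertex $x$ is $p^{-1}(x)$. Proposition \ref{prop:G-set-grpd}, applied to the pointed groupoid $(\Pi^l(X),x_0)$, yields ${\sf Cov}(\Pi^l(X)) \simeq \pi_1^l(X)\text{-}{\sf Set}$ sending a covering groupoid $q$ to the $\pi_1^l(X)$-set $q^{-1}(x_0)$. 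Composing, we obtain ${\sf Cov}_l(X) \simeq \pi_1^l(X)\text{-}{\sf Set}$ with the stated formula on objects.

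The only nontrivial point is that Proposition \ref{prop:G-set-grpd} requires the groupoid $\Pi^l(X)$ to be connected, whereas our hypothesis is only that the underlying undirected graph of $X$ is connected. So before chaining the equivalences, I would check the following implication: if $X$ is connected as a digraph, then $\Pi^l(X)$ is a connected groupoid. By Proposition \ref{prop:pi^l}, $\Pi^l(X)$ is generated by arrows of the form $(x,x')$ with $(x,x')\in X_1$ (and their formal inverses, since $\Pi^l(X)$ is a groupoid). Given any two vertices $x,x'\in X_0$, connectedness of the underlying undirected graph provides a sequence $x=z_0,z_1,\dots,z_k=x'$ such that for each $i$ either $(z_i,z_{i+1})\in X_1$ or $(z_{i+1},z_i)\in X_1$. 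The corresponding product of generators and their inverses is a morphism $x\to x'$ in $\Pi^l(X)$, which proves connectedness.

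With connectedness in hand, the two equivalences compose formally. Tracing the object assignments confirms that the composite sends an $l$-covering $p:E\to X$ to $p^{-1}(x_0)$, regarded as a right $\pi_1^l(X)$-set via the unique-lifting action described in Theorem \ref{theorem:coverings_equivalence}.

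I do not expect any real obstacle here; the statement is a direct specialization of the preceding general results, and the only verification worth writing out is the passage from ``$X$ connected as a digraph'' to ``$\Pi^l(X)$ connected as a groupoid''.
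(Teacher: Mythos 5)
Your proposal is correct and follows essentially the same route as the paper, which simply cites Theorem \ref{theorem:coverings_equivalence} and Proposition \ref{prop:G-set-grpd}; your extra verification that connectedness of the digraph implies connectedness of the groupoid $\Pi^l(X)$ is a point the paper leaves implicit, and your argument for it is correct.
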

\begin{proof}
It follows from Theorem \ref{theorem:coverings_equivalence} and 
Proposition \ref{prop:G-set-grpd}. 
\end{proof}

A digraph is called connected, if the underlying undirected graph is connected. Let $X$ be a pointed connected digraph. A pointed $l$-covering $p: E \to X$ is an $l$-covering, which is a morphism of pointed spaces. The category of pointed $l$-coverings ${\sf Cov}_{*,l}(X)$ is the full subcategory of ${\sf Digr}_*\!\downarrow \! X.$ The category of pointed and connected $l$-coverings will be denoted by ${\sf Cov}^{\sf conn}_{*,l}(X).$

\begin{theorem}\label{theorem:galua-digraphs}
Let $X$ be a connected pointed digraph. Then there is an equivalence of categories
\[{\sf Cov}^{\sf conn}_{*,l}(X) \simeq {\sf Sub}(\pi^l_1(X)) \]
that sends a connected pointed $l$-covering $p:E\to X$ to the image of the homomorphism  $p_*:\pi^l_1(E) \to \pi^l_1(X).$ Moreover,
for any connected pointed $l$-covering $p:E\to X$ the homomorphism $p_*:\pi^l_1(E) \to \pi^l_1(X)$ is injective and there is an isomorphism $p^{-1}(x_0)\cong \pi_1^l(X)/p_*( \pi_1^l(E) )$ of $\pi_1^l(X)$-sets.
\end{theorem}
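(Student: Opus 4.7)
My plan is to compose the equivalence from Theorem \ref{theorem:coverings_equivalence} with the groupoid Galois correspondence of Proposition \ref{prop:galua-grpd}, after checking that everything restricts correctly to pointed connected objects.

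First, I would promote the equivalence ${\sf Cov}_l(X) \simeq {\sf Cov}(\Pi^l(X))$ of Theorem \ref{theorem:coverings_equivalence} to a pointed equivalence ${\sf Cov}_{*,l}(X) \simeq {\sf Cov}_*(\Pi^l(X))$. This is routine: under the explicit descriptions of ${\sf Fib}$ and $\Psi$ in that theorem, a basepoint $e_0 \in p^{-1}(x_0)$ for an $l$-covering corresponds precisely to a choice of a distinguished object in the fiber of the associated groupoid covering, which is the data of a basepoint for the pointed groupoid covering.

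Next, I would check that this equivalence restricts to connected objects on both sides; equivalently, for any digraph $E$, its underlying undirected graph is connected if and only if the groupoid $\Pi^l(E)$ is connected. In one direction, every edge of the underlying undirected graph of $E$ arises from an arrow of $E$, which defines a morphism of $\Pi^l(E)$, so an undirected path yields a morphism between the corresponding vertices of the groupoid. Conversely, by the presentation in Proposition \ref{prop:pi^l}, every morphism of $\Pi^l(E)$ is represented by a word in arrows and formal inverses of arrows, which translates directly into a zig-zag path in the underlying undirected graph. Applying this observation to $X$ itself shows that $\Pi^l(X)$ is a connected groupoid, and we obtain the equivalence
\[
{\sf Cov}_{*,l}^{\sf conn}(X) \simeq {\sf Cov}_*^{\sf conn}(\Pi^l(X)).
\]

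Finally, I would apply Proposition \ref{prop:galua-grpd} to the pointed connected groupoid $(\Pi^l(X), x_0)$ to obtain an equivalence ${\sf Cov}_*^{\sf conn}(\Pi^l(X)) \simeq {\sf Sub}(\pi_1^l(X))$ sending a pointed covering $\mathpzc{H}\to \Pi^l(X)$ to the image of the map on vertex groups. Since the functor $\Pi^l(-)$ sends a morphism of pointed digraphs $p:E\to X$ to a functor whose restriction to vertex groups at $x_0$ is $p_*:\pi_1^l(E)\to \pi_1^l(X)$, the composite equivalence sends $p$ to $p_*(\pi_1^l(E))$. The injectivity of $p_*$ is Lemma \ref{lemma:stab} applied to the pointed covering $\Pi^l(p)$, and the isomorphism $p^{-1}(x_0)\cong \pi_1^l(X)/p_*(\pi_1^l(E))$ of $\pi_1^l(X)$-sets follows from the last statement of Proposition \ref{prop:galua-grpd} combined with the identification ${\sf Fib}(p)(x_0)=p^{-1}(x_0)$ of Theorem \ref{theorem:coverings_equivalence}. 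The only point requiring genuine verification rather than formal bookkeeping is the equivalence between connectedness of $E$ as a digraph and connectedness of $\Pi^l(E)$ as a groupoid; everything else is transport along already established equivalences.
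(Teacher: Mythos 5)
Your proposal is correct and follows essentially the same route as the paper's proof, which simply notes that Theorem \ref{theorem:coverings_equivalence} yields ${\sf Cov}^{\sf conn}_{*,l}(X)\simeq {\sf Cov}^{\sf conn}_{*}(\Pi^l(X))$ and then invokes Proposition \ref{prop:galua-grpd} and Lemma \ref{lemma:stab}. Your only addition is the explicit verification that connectedness of a digraph matches connectedness of its $l$-fundamental groupoid, a point the paper leaves implicit; your argument for it is sound.
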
 
\begin{proof}
Theorem \ref{theorem:coverings_equivalence} implies that ${\sf Cov}^{\sf conn}_{*,l}(X)$ is equivalent to the category pointed connected covering groupoids ${\sf Cov}^{\sf conn}_{*}(\Pi^l(X)).$ Then the assertion follows from Proposition \ref{prop:galua-grpd} and Lemma \ref{lemma:stab}.
\end{proof}

\subsection{Universal {\it l}-covering and Deck transformations}
Let $X$ be a connected digraph. 
An $l$-covering 
$p_U:U\to X$ is 
called universal, if 
$\Pi^l(U)$ is indiscrete (i.e. all hom-sets are one-element). In other words, $p_U:U\to X$ is universal if $U$ is connected and $\pi_1(U)$ is trivial (since $U$ is connected $\pi_1(U)$ does not depend on the choice of the basepoint in $U$ up to isomorphism). By Theorem 
\ref{theorem:galua-digraphs} the universal covering exists, and if we fix basepoints of
$X$ and $U,$ then 
$p_U:U\to X$ is the initial 
object of ${\sf Cov}^{\sf conn}_{*,l}(X).$ In particular, it is unique. Moreover, if we fix a basepoint it is unique up to unique isomorphism, but if we don't fix a basepoint, the universal covering has non-trivial automorphisms which we are going to describe. In the case of spaces such automorphisms are called Deck transformations.

If $p_U:U\to X$ is a universal $l$-cover, for two vertices $u,u'$ of $U$ we denote by $\alpha_{u,u'}$ the unique morphism in $\Pi^l(U)(u,u').$ 

\begin{proposition}\label{prop:fibers_of_universal} 
Let $X$ be a connected digraph and let $p_U:U\to X$ be the universal $l$-covering with some chosen basepoints $u_0$ in $U$ and $x_0=p(u_0)$ in $X.$ Then there is a bijection  
\begin{equation}
\theta : U_0 \cong \Pi^l(X)(X_0,x_0)  \end{equation}
defined by the formulas 
\begin{equation}\label{eq:theta}
\theta(u)=p(\alpha_{u,u_0}), \hspace{1cm} \theta^{-1}(\gamma) =  u_0\gamma.
\end{equation}
Moreover, $(u,u')$ is an arrow in $U$ if and only if $(p(u),p(u'))$ is an arrow in $X$ and  $(p(u),p(u'))\cdot\theta(u')  =\theta(u).$
\end{proposition}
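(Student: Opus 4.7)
The proof rests on the fact that $U$ being a universal $l$-covering forces $\Pi^l(U)$ to be indiscrete, so $\alpha_{u,u'}$ is the unique morphism $u\to u'$ in $\Pi^l(U)$ for any pair $u,u'\in U_0$. First I would check well-definedness: $\theta(u)=p(\alpha_{u,u_0})$ lies in $\Pi^l(X)(p(u),x_0)$ by functoriality of $p_*$, and for $\gamma\in\Pi^l(X)(x,x_0)$ the expression $u_0\gamma$ lies in $p^{-1}(x)\subseteq U_0$ by the covering groupoid action recalled above Theorem~\ref{theorem:coverings_equivalence}, applied to the covering groupoid $\Pi^l(U)\to\Pi^l(X)$ obtained from the covering simplicial set $\SS^l(U)\to\SS^l(X)$ via Proposition~\ref{prop:equivalent_l-covering}.

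Next I would verify that $\theta$ and $\theta^{-1}$ are mutually inverse by unpacking the definition of the action. By construction, $u_0\gamma$ is characterized as the unique vertex admitting a morphism $\tilde\gamma:u_0\gamma\to u_0$ in $\Pi^l(U)$ with $p(\tilde\gamma)=\gamma$; since $\Pi^l(U)$ is indiscrete, this unique morphism is forced to be $\alpha_{u_0\gamma,u_0}$, whence $\theta(u_0\gamma)=p(\alpha_{u_0\gamma,u_0})=\gamma$. Conversely, $\alpha_{u,u_0}$ itself is a morphism ending at $u_0$ which projects to $\theta(u)$, so the uniqueness clause in the definition of the action yields $u_0\cdot\theta(u)=u$.

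For the arrow characterization, the forward direction is direct: an arrow $(u,u')\in U_1$ gives a morphism in $\Pi^l(U)(u,u')$ which must equal $\alpha_{u,u'}$ by uniqueness, and composition in the indiscrete groupoid $\Pi^l(U)$ yields $\alpha_{u,u_0}=\alpha_{u,u'}\cdot\alpha_{u',u_0}$; applying $p$ produces the desired equation, while $(p(u),p(u'))\in X_1$ is automatic from $p$ being a morphism of digraphs. The reverse direction is the main step: assuming $(p(u),p(u'))\in X_1$ and $(p(u),p(u'))\cdot\theta(u')=\theta(u)$, I would use that $p$ is a $1$-covering to lift the arrow starting at $u$ to a unique arrow $(u,u'')\in U_1$ with $p(u'')=p(u')$. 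Applying the forward direction to $(u,u'')$ gives $\theta(u)=(p(u),p(u'))\cdot\theta(u'')$; comparing with the hypothesis and cancelling the invertible morphism $(p(u),p(u'))$ in the groupoid $\Pi^l(X)$ yields $\theta(u'')=\theta(u')$, so $u''=u'$ by the bijectivity already established, giving $(u,u')=(u,u'')\in U_1$. The main conceptual knot is keeping two distinct uniqueness properties aligned---uniqueness of morphisms in the indiscrete groupoid $\Pi^l(U)$ and uniqueness of lifts in the covering action---and once these are matched up the rest is pure groupoid bookkeeping.
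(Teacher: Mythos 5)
Your proof is correct and follows essentially the same route as the paper's: both arguments combine the indiscreteness of $\Pi^l(U)$ with the covering-groupoid property of $\Pi^l(p)$ (the bijection on hom-sets into a fiber) to obtain the bijection $\theta$, and both use the unique lifting property of the $1$-covering for the arrow description. One small imprecision worth fixing: in the forward direction of the arrow characterization, $p$ being a morphism of digraphs only gives $(p(u),p(u'))\in X_1$, which could lie on the diagonal; to conclude it is a genuine arrow you need $p(u)\ne p(u')$, which follows from Corollary \ref{cor:dist=dist} since distinct vertices in one fiber are at distance greater than $l$.
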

\begin{proof} 
Since $\Pi^l(U)$ is indiscrete, the map $p^{-1}(x)\to \Pi^l(U)(p^{-1}(x),u_0)$ sending $u$ to $\alpha_{u,u_0}$ is a bijection. Since 
$\Pi^l(p):\Pi^l(U)\to \Pi^l(X)$ 
is a covering of groupoids, $p$ induces a bijection $\Pi^l(U)(p^{-1}(x),u_0) \cong \Pi^l(X)(x,x_0).$ Therefore, we obtain that $\theta$ gives a bijection $p^{-1}(x)\cong \Pi^l(X)(x,x_0).$ Taking the union by all $x,$ we obtain the bijection $U_0\cong \Pi^l(X)(X_0,x_0).$
The description of arrows in $U$ follows from the definition of $1$-cover. 
\end{proof}

\begin{remark}
Proposition \ref{prop:fibers_of_universal} can be considered as a recipe for explicit construction of the universal $l$-cover.   Define $U$ so that $U_0=\Pi^l(X)(X_0,x_0),$ and arrows of $U$ have the form $((x,x')\cdot \gamma,\gamma)$ for $\gamma\in U_0$ and an arrow $(x,x').$ The map $p_U$ is defined by the formula $p_U(\gamma:x\to x_0) = x.$  
\end{remark}

\begin{proposition}[Deck transformations] \label{prop:Deck}
Let $X$ be a connected digraph and let $p_U:U\to X$ be the universal $l$-covering with chosen basepoints $u_0$ in $U$ and  $x_0=p(u_0)$ in $X$. Then there exists a left action of $\pi_1^l(X)$ on $U$ (which is a right $(\pi_1^l(X),\cdot)$-action) by automorphisms of covers defined by the formula
\begin{equation}\label{eq:Deck}
\gamma u  := \theta^{-1}(\gamma\theta(u)). 
\end{equation}
This action induces an isomorphism
\begin{equation}
\pi^l_1(X) \cong {\sf Aut}_{{\sf Cov}_l(X)}(p_U).
\end{equation}
\end{proposition}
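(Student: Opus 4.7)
My plan is to verify directly that $\gamma u=\theta^{-1}(\gamma\theta(u))$ is a well-defined action by bijections, that each such bijection is a morphism of digraphs over $X$, and then to establish the isomorphism $\pi^l_1(X)\cong \mathsf{Aut}_{\mathsf{Cov}_l(X)}(p_U)$ via a uniqueness-of-lifting argument on the connected cover $U$.

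For well-definedness, since $\theta(u)\in \Pi^l(X)(p(u),x_0)$ and $\gamma\in\pi^l_1(X)=\Pi^l(X)(x_0,x_0)$, the composite $\gamma\theta(u)$ lies in $\Pi^l(X)(p(u),x_0)\subseteq\Pi^l(X)(X_0,x_0)$, which is in the image of the bijection $\theta$. The action axioms then reduce to the unit and associativity laws in $\Pi^l(X)$. Moreover, since $\theta$ restricts to bijections $p^{-1}(x)\cong \Pi^l(X)(x,x_0)$ by Proposition \ref{prop:fibers_of_universal}, one has $p(\gamma u)=p(u)$ for all $\gamma$ and $u$.

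Setting $\phi_\gamma(u):=\gamma u$, I would then show $\phi_\gamma$ is a digraph morphism over $X$ using the characterization of arrows in $U$ from Proposition \ref{prop:fibers_of_universal}: for an arrow $(u,u')$ in $U$, the relation $(p(u),p(u'))\cdot\theta(u')=\theta(u)$, composed on the right with $\gamma$ and combined with $\theta(\gamma u)=\gamma\theta(u)$ together with $p(\gamma u)=p(u)$, yields $(p(\gamma u),p(\gamma u'))\cdot\theta(\gamma u')=\theta(\gamma u)$, so $(\gamma u,\gamma u')$ is an arrow. With $p\circ\phi_\gamma=p$ and the inverse $\phi_{\gamma^{-1}}$ supplied by the action axioms, this places $\phi_\gamma$ in $\mathsf{Aut}_{\mathsf{Cov}_l(X)}(p_U)$, and $\gamma\mapsto\phi_\gamma$ becomes a group homomorphism.

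For injectivity, if $\phi_\gamma={\sf id}$ then $\gamma u_0=u_0$; since $\theta(u_0)=p(1_{u_0})=1_{x_0}$, this forces $\gamma=1_{x_0}$. For surjectivity, given $\phi\in\mathsf{Aut}_{\mathsf{Cov}_l(X)}(p_U)$, the value $\phi(u_0)$ lies in $p^{-1}(x_0)$, so I set $\gamma:=\theta(\phi(u_0))\in\pi^l_1(X)$ and obtain $\phi_\gamma(u_0)=\theta^{-1}(\gamma)=\phi(u_0)$. The main obstacle is the final step: concluding $\phi=\phi_\gamma$ from agreement at a single vertex. I would handle this by propagating the equality along arrows of $U$: whenever $\phi(e)=\phi_\gamma(e)$, the uniqueness of lifts for the $1$-covering $p_U$ (from the characterization in Proposition \ref{prop:covering_description1}) forces $\phi(e')=\phi_\gamma(e')$ for any arrow $(e,e')$ of $U$, and dually for incoming arrows; since $U$ is connected in the undirected sense by the definition of the universal cover, induction along an undirected path from $u_0$ extends the equality to all of $U_0$.
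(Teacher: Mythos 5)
Your argument is correct, but it takes a genuinely different route from the paper. The paper's proof is categorical: it invokes the equivalence ${\sf Cov}_l(X)\simeq \pi_1^l(X)\text{-}{\sf Set}$ of Corollary \ref{cor:G-sets-digr}, observes that the fibre $p_U^{-1}(x_0)$ is the free transitive $\pi_1^l(X)$-set (i.e.\ $\pi_1^l(X)$ acting on itself by shifts, via $\theta$), and concludes with the standard isomorphism $G\cong {\sf Aut}_{G\text{-}{\sf Set}}(G)$, so that ${\sf Aut}_{{\sf Cov}_l(X)}(p_U)\cong{\sf Aut}_{G\text{-}{\sf Set}}(G)\cong\pi_1^l(X)$. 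You instead verify everything by hand: the action axioms from composition in $\Pi^l(X)$, the fact that each $\phi_\gamma$ preserves arrows via the arrow criterion $(p(u),p(u'))\cdot\theta(u')=\theta(u)$ of Proposition \ref{prop:fibers_of_universal} (your right-composition with $\gamma$ is exactly compatible with the paper's left-to-right dot convention), injectivity from $\theta(u_0)=1_{x_0}$, and surjectivity by the rigidity of covering automorphisms --- agreement at $u_0$ propagates along arrows by uniqueness of lifts and spreads to all of $U_0$ by undirected connectedness. All of these steps are sound; the one point worth making explicit is that in the propagation step the two lifted arrows $(\phi(e),\phi(e'))$ and $(\phi_\gamma(e),\phi_\gamma(e'))$ lie over the same arrow of $X$ and share a tail (resp.\ head), which is precisely the uniqueness clause in the definition of a $1$-covering quiver rather than Proposition \ref{prop:covering_description1} per se. What your approach buys is self-containedness and an explicit description of where each deck transformation sends each vertex; what the paper's approach buys is brevity and reuse of the Galois correspondence already established in Theorem \ref{theorem:galua-digraphs}, at the cost of hiding the concrete rigidity argument inside the equivalence of categories.
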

\begin{proof}
It is easy to check that action on $U$ defined so that $\theta$ is an isomorphism of $\pi_1^l(X)$-sets is an action of $\pi_1^l(X)$ by automorphisms of covers. The action on the fibre $p^{-1}_U(x_0)$ is isomorphic to the action of $\pi_1^l(X)$ on itself by shifts. For any group $G$ there is an isomorphisms $G \cong  {\sf Aut}_{G\text{-}{\sf Set}}(G)$ such that $g\mapsto (x\mapsto gx).$ Combining this with Corollary \ref{cor:G-sets-digr} we obtain that the action induces an isomorphism  $\pi_1^l(X) \cong {\sf Aut}_{{\sf Cov}_l(X)}(p_U).$
\end{proof}

\subsection{Homotopy lifting property for 2-covering digraphs} 
For two morphisms of digraphs $f,g:X\to Y$ we define the distance $\dist(f,g)$ as the supremum of the distances $\dist(f(x),g(x)), x\in X_0.$ We say that two morphisms of digraphs $f,g:X\to Y$ are one-step homotopic, if either $\dist( f, g )\leq 1,$ or $\dist( g, f)\leq 1.$ A sequence of morphisms $(f_0,\dots,f_n), f_i:X\to Y$ is called a homotopy, if $f_i$ is one-step homotopic to $f_{i+1}.$ Two morphisms $f,g: X\to Y$ are homotopic, if there is a homotopy $(f_0,\dots,f_n)$ such that $f_0=f,f_n=g.$ It is easy to see that a homotopy $h=(f_0,\dots,f_n)$ can be also defined as a morphism 
\begin{equation}
h : X \square I_n \longrightarrow Y,
\end{equation}
where $I_n$ is a digraph with the vertex set $\{0,\dots,n\}$ and one arrow between $i$ and $i+1$ with some choice of directions. 
\[ I_n : \hspace{1cm} 0 \to 1 \leftarrow 2 \leftarrow 3 \to 4 \leftarrow \dots \to n \]

\begin{proposition} Let $p:E\to Y$ be a $2$-covering digraph and let $(f_0,\dots,f_n), f_i:X\to Y$ be a homotopy. Then for any morphism $g_0:X\to E$ such that $p g_0=f_0$ there exists a unique homotopy $(g_0,\dots,g_n)$ such that $p g_i=f_i.$    
\end{proposition}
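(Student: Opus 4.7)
The plan is to induct on $n$, reducing to the one-step case, and then to exploit the characterization of $2$-coverings in Proposition~\ref{prop:covering_description1}(3). Given a partial lift $(g_0,\dots,g_i)$ already produced, the goal at each stage is to construct a unique morphism $g_{i+1}:X\to E$ that is one-step homotopic to $g_i$ and satisfies $p g_{i+1}=f_{i+1}$; uniqueness of the whole lifted homotopy then reduces to uniqueness at each step.

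Assume first the ``forward'' case $\dist(f_i,f_{i+1})\leq 1$ (the backward case is dual). For each $x\in X_0$ one has $(f_i(x),f_{i+1}(x))\in Y_1$, so the $1$-covering property forces a unique vertex $g_{i+1}(x)\in p^{-1}(f_{i+1}(x))$ with $(g_i(x),g_{i+1}(x))\in E_1$. This defines $g_{i+1}$ on vertices and automatically gives $p g_{i+1}=f_{i+1}$ and $\dist(g_i,g_{i+1})\leq 1$; uniqueness of $g_{i+1}$ subject to these constraints is immediate from the $1$-covering property.

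The hard part is to verify that $g_{i+1}$ respects arrows, i.e., that $(x,x')\in X_1$ implies $(g_{i+1}(x),g_{i+1}(x'))\in E_1$; this is precisely where the $2$-covering hypothesis enters. I would introduce the auxiliary vertex $e\in p^{-1}(f_{i+1}(x'))$ characterized by $(g_{i+1}(x),e)\in E_1$ (existing uniquely by the $1$-covering property applied to $(f_{i+1}(x),f_{i+1}(x'))\in Y_1$), and then compare the two paths in $E$ of length at most $2$
\[
(g_i(x),\,g_i(x'),\,g_{i+1}(x'))\quad\text{and}\quad (g_i(x),\,g_{i+1}(x),\,e),
\]
both starting at $g_i(x)$ and projecting under $p$ to paths in $Y$ ending at $f_{i+1}(x')$. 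By Proposition~\ref{prop:covering_description1}(3) applied with $l=2$, the endpoints in $E$ must agree, whence $g_{i+1}(x')=e$, and therefore $(g_{i+1}(x),g_{i+1}(x'))\in E_1$, as required.

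In the backward case $\dist(f_{i+1},f_i)\leq 1$, I would dually define $g_{i+1}(x)$ as the unique lift of $f_{i+1}(x)$ with $(g_{i+1}(x),g_i(x))\in E_1$ and run the analogous comparison, this time using the paths $(g_{i+1}(x),g_i(x),g_i(x'))$ and $(g_{i+1}(x),e,e')$ in $E$, where $e$ is the forward lift of $f_{i+1}(x')$ above $g_{i+1}(x)$ and $e'$ is the forward lift of $f_i(x')$ above $e$, applying Proposition~\ref{prop:covering_description1}(3) once more.
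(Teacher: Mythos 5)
Your existence argument coincides with the paper's: reduce to a single step, lift vertexwise using the $1$-covering property, and verify that the lift respects arrows by comparing the two $2$-paths $(g_i(x),g_i(x'),g_{i+1}(x'))$ and $(g_i(x),g_{i+1}(x),e)$ via Proposition~\ref{prop:covering_description1}(3). That part, including your handling of the backward case, is correct.

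The gap is in uniqueness. A homotopy $(g_0,\dots,g_n)$ only requires consecutive terms to be one-step homotopic \emph{in either direction}: the definition allows $\dist(g_{i+1},g_i)\leq 1$ even when the base homotopy satisfies $\dist(f_i,f_{i+1})\leq 1$. Your claim that uniqueness is ``immediate from the $1$-covering property'' only rules out a second lift $g_{i+1}'$ with $\dist(g_i,g_{i+1}')\leq 1$; it does not exclude a competing morphism $g_{i+1}'$ with $p g_{i+1}'=f_{i+1}$ and $\dist(g_{i+1}',g_i)\leq 1$. Excluding that case genuinely needs the $2$-covering hypothesis, not just the $1$-covering one: by the triangle inequality, $\dist(g_{i+1}'(x),g_{i+1}(x))\leq \dist(g_{i+1}'(x),g_i(x))+\dist(g_i(x),g_{i+1}(x))\leq 2$, and since $g_{i+1}(x)$ and $g_{i+1}'(x)$ lie in the same fibre $p^{-1}(f_{i+1}(x))$, Corollary~\ref{cor:dist=dist} forces $g_{i+1}(x)=g_{i+1}'(x)$. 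This is exactly the extra step the paper adds at the end of its proof; without it the uniqueness assertion is unproved.
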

\begin{proof}
It is easy to see that it is sufficient to prove this for the case $n=1.$ Assume that $f_0,f_1:X\to Y$ are one-step homotopic and $g_0:X\to E$ is a morphism such that $p g_0=f_0.$ So, either $\dist(f_0,f_1)\leq 1,$  or $\dist(f_1,f_0)\leq 1.$  Further we assume that $\dist(f_0,f_1)\leq 1,$ the second case is similar. Using that $p$ is a $1$-covering we obtain that for each $x$ there a unique vertex $g_1(x)\in p^{-1}( f_1(x) )$ such that $\dist( g_0(x),g_1(x))= \dist(f_0(x),f_1(x))\leq 1.$ 

Let us prove that $g_1$ is a morphism $X\to E.$ Assume that $\dist(x,x')= 1.$ Then there exists a unique $e'\in p^{-1}(f_1(x'))$ such that $ \dist(g_1(x),e') =\dist( f_1(x),f_1(x') )\leq 1.$ Then $(g_0(x),g_0(x'),g_1(x'))$ and $(g_0(x),g_1(x),e')$ are $2$-paths such $p(g_1(x'))=p(e')$. By Proposition  \ref{prop:covering_description1} we obtain $g_1(x')=e'$ (here we use that $p$ is a $2$-covering). Therefore, $ \dist(g_1(x),g_1(x')) = \dist(f_1(x),f_1(x'))\leq 1.$ Hence $g_1$ is a morphism.

By the construction it is obvious that $g_1:X\to E$ is the unique morphism such that $\dist(g_0,g_1) \leq 1$  and $pg_1=f_1.$ We need to check that, if $g'_1:X\to E$ is a morphism such that $p g'_1=f_1$ and $\dist(g_1',g_0 )\leq 1,$ then $g_1=g'_1.$ Indeed, using the triangle inequality and the inequalities $\dist(g'_1(x),g_0(x))\leq 1$ and $\dist(g_0(x),g_1(x))\leq 1,$  we obtain $\dist(g'_1(x),g_1(x))\leq \dist(g'_1(x),g_0(x)) +  \dist(g_0(x),g_1(x))\leq 2.$ Since $pg_1=f_1=pg'_1,$ we have  $p(g_1(x))=p(g'_1(x)).$ By Corollary \ref{cor:dist=dist} we obtain $g_1(x)=g'_1(x).$ 
\end{proof}

\subsection{Path chains of 2-covering digraphs}

Recall that for a digraph $X$ and two its vertices $x,x'$ we denote by $\Omega_n(X,x,x')$ the set of path chains which are linear combinations of paths with tail in $x$ and head in $x'.$ More generally, for a subset in the set of vertices $W,W'\subseteq X_0,$ we denote by $\Omega_n(X,W,W')\subseteq \Omega_n(X)$ the set of path chains which are linear combinations of paths with tail in a vertex of $W$ and head in a vertex of $W'.$ The formula \eqref{eq:clusters} implies that
\begin{equation}
\Omega_n(X,W,W') = \bigoplus_{(x,x')\in W\times W'} \Omega_n(X,x,x').    
\end{equation}
Mainly, we will use the case when one of the sets is one-element $W=\{x\}$ or $W'=\{x'\}$ and use the notation $\Omega_n(X,x,W')=\Omega_n(X,\{x\},W')$ and $\Omega_n(X,W,x')=\Omega_n(X,W,\{x'\})$ in this case. Similarly we define $\AA_n(X,W,W')\subseteq \AA_n(X)$ and $\RR_n(X,W,W')\subseteq \RR_n(X).$

\begin{proposition}\label{prop:omega-2-coverings}
Let $p:E\to X$ be a 2-covering, $x$ be a vertex of $X$ and $e$ be a vertex of $E.$ Then $p$ induces isomorphisms 
\begin{equation}\label{eq:iso_omega_1}
\Omega_n(E,e,p^{-1}(x)) \cong \Omega_n(X,p(e),x)   
\end{equation}
and  
\begin{equation}
\Omega_n( E, p^{-1}(x),e ) \cong \Omega_n(X,x,p(e)).
\end{equation}
\end{proposition}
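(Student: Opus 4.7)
The plan is to construct $p_*$ at the level of path chains, show it is a $\KK$-module isomorphism on $\AA_n$, and then check that it restricts to an isomorphism on $\Omega_n$ via the face-map characterization from Proposition \ref{prop:description_of_PC}. The second isomorphism follows by the same argument applied after reversing the orientation of every arrow (formally, by passing to the opposite digraphs), so I focus on the first.

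First, I define $p_*\colon \RR_n(E)\to \RR_n(X)$ by $[e_0,\dots,e_n]\mapsto [p(e_0),\dots,p(e_n)]$. This map commutes with every face map $d_i$ and carries paths to paths, so it restricts to a map $p_*\colon \AA_n(E,e,p^{-1}(x))\to \AA_n(X,p(e),x)$. By the path-lifting property of $1$-coverings (a special case of Corollary \ref{cor:l-path_lifting}), every path in $X$ from $p(e)$ to $x$ has a unique lift to a path in $E$ starting at $e$, and the endpoint of this lift automatically lies in $p^{-1}(x)$. Hence $p_*$ is a bijection on the natural bases of these two $\KK$-modules, and therefore a $\KK$-module isomorphism.

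Next, by Proposition \ref{prop:description_of_PC}, $\Omega_n(E,e,p^{-1}(x))$ and $\Omega_n(X,p(e),x)$ consist of those $a\in \AA_n$ with $d_i(a)\in \AA_{n-1}$ for every $1\leq i\leq n-1$. Since $p_*$ commutes with $d_i$ and sends paths to paths, the inclusion $p_*(\Omega_n(E,e,p^{-1}(x)))\subseteq \Omega_n(X,p(e),x)$ is immediate. The substantive step is the converse: if $a\in \AA_n(E,e,p^{-1}(x))$ satisfies $d_i(p_*(a))\in \AA_{n-1}(X)$ for all $1\leq i\leq n-1$, I must deduce $d_i(a)\in \AA_{n-1}(E)$.

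Here I reuse the type decomposition from the proof of Proposition \ref{prop:description_of_PC}: since each summand of $a$ is a path, each $d_i(a)$ decomposes as $d_i(a)_{\mathbf{1}}+d_i(a)_{t_i}$ with the first piece in $\AA_{n-1}$ and the second in $\RR_{t_i}$, so the task reduces to showing $d_i(a)_{t_i}=0$. This is the main obstacle, and it is precisely where the $2$-covering hypothesis enters. On the one hand, Corollary \ref{cor:dist=dist} shows that $p$ preserves all distances at most $2$, so $p_*$ respects the type decomposition and $p_*(d_i(a))_{t_i}=p_*(d_i(a)_{t_i})$. On the other hand, Corollary \ref{cor:l-path_lifting} with $l=2$ implies that any accessible sequence in $E$ whose consecutive distances are all at most $2$ is uniquely determined by its tail together with its image under $p$; in particular $p_*$ is injective on $\RR_{t_i}(E,e,p^{-1}(x))$. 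Combining these two observations, $d_i(a)_{t_i}=0$ if and only if $d_i(p_*(a))_{t_i}=p_*(d_i(a)_{t_i})=0$, and the latter holds by hypothesis. This finishes the first isomorphism; the second is obtained by the symmetric argument with tails and heads exchanged.
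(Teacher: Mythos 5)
Your proof is correct and follows essentially the same route as the paper's: the paper likewise reduces membership in $\Omega_n$ to the vanishing of the component of $d_i$ spanned by sequences of type $t_i$ (its modules $B^k_{n-1}$ and maps $d_k'$ are exactly your $\RR_{t_i}$-components), and then uses Lemma \ref{lemma:path_lifting} and Corollary \ref{cor:l-path_lifting} to get bijections on the bases of regular paths and of type-$t_i$ sequences with fixed tail, concluding via the resulting commutative square. The only difference is notational, so there is nothing to add.
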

\begin{proof} First we introduce some notations and make some notes for arbitrary digraph $Y.$
For sets of vertices $W,W'\subseteq Y_0$ we denote by $RP_n(Y,W,W')$ the set of regular paths $(y_0,\dots,y_n)$ (i.e. $\dist(y_i,y_{i+1})=1$) such that $y_0\in W$ and $y_n\in W'.$ For a number $1\leq k\leq n$ we denote by $RN_n^{k}(Y,W,W')$ the set of sequences $(y_0,\dots,y_n)$ such that $y\in W,y'\in W'$  and $\dist(y_i,y_{i+1})=1$ for $i\neq k$ and $\dist(y_k,y_{k+1})=2.$ Note that $\AA_n(Y,W,W')$ is a submodule in $\RR_n(Y,W,W')$ generated by $RP_n(Y,W,W').$ 
We denote by $B_n^k(Y,W,W')$ the submodule of $\RR_n(Y,W,W')$ generated by $RN^k_n(Y,W,W').$
Note that for any $1\leq k \leq n-1$ (here we assume $k\ne 0,n$) we have $d_k(\AA_n(Y,W,W'))\subseteq \AA_{n-1}(Y,W,W')\oplus B_{n-1}^k(Y,W,W').$ We denote by 
\begin{equation}
 d_k':\AA_n(Y,W,W') \to B_{n-1}^k(Y,W,W')   
\end{equation}
the composition of $d_k$ with the projection to $B_{n-1}^k(Y,W,W')$. Then by Proposition \ref{prop:description_of_PC} we have
\begin{equation}\label{eq:Omega_formula}
\Omega_n(Y,W,W') = \{a\in \AA_n(Y,W,W')\mid   d_k'(a)=0 \text{ for } 1\leq k\leq n-1\}.  
\end{equation}

Now  assume that $p:E\to X$ is a 2-covering, $x$ is a vertex of $X$ and $e$ is a vertex of $E.$  Lemma \ref{lemma:path_lifting} and Corollary \ref{cor:l-path_lifting} imply that 
$p$ induces bijections
\begin{equation}
RP_n(E,e,p^{-1}(x)) \cong  RP_n(X,p(e),x),
\end{equation}
\begin{equation}
RN^k_n(E,e,p^{-1}(x)) \cong  RN^k_n(X,p(e),x).
\end{equation}
Therefore we obtain that the horizontal arrows of the following commutative diagram are isomorphisms
\begin{equation}
\begin{tikzcd}
\AA_n(E,e,p^{-1}(x)) \ar[rr,"\cong"] \ar[d,"d'_k"] && \AA_n(X,p(e),x) \ar[d,"d'_k"] \\ 
B^k_{n-1}(E,e,p^{-1}(x)) \ar[rr,"\cong"] &&  B^k_{n-1}(X,p(e),x).
\end{tikzcd}
\end{equation}
The first isomorphism of the statement follows from this diagram and formula \eqref{eq:Omega_formula}. The second isomorphism is similar. 
\end{proof}

\begin{proposition}\label{prop:cover:clusters}
Let $p:E\to X$ be a 2-covering and $\gamma$ be a morphism in $\Pi^2(E).$  Then $p$ induces an isomorphism of modules of clusters
\begin{equation}
\Omega_n(E,\gamma) \cong \Omega_n(X,p(\gamma)).     
\end{equation}
\end{proposition}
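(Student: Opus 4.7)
The plan is to deduce this from Proposition \ref{prop:omega-2-coverings} together with the cluster decomposition of Proposition \ref{prop:clusters}, using the covering groupoid structure on $\Pi^2(p):\Pi^2(E)\to \Pi^2(X)$.

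First, fix a representative of $\gamma$, say a path in $E$ with tail $e$ and head $e'$, so that $\gamma\in \Pi^2(E)(e,e')$ and $p(\gamma)\in \Pi^2(X)(p(e),p(e'))$. By Proposition \ref{prop:clusters} applied to $E$ we have
\begin{equation*}
\Omega_n\bigl(E,e,p^{-1}(p(e'))\bigr) \;=\; \bigoplus_{\tilde e'\in p^{-1}(p(e'))}\; \bigoplus_{\gamma'\in \Pi^2(E)(e,\tilde e')} \Omega_n(E,\gamma'),
\end{equation*}
and by Proposition \ref{prop:clusters} applied to $X$,
\begin{equation*}
\Omega_n(X,p(e),p(e')) \;=\; \bigoplus_{\delta\in \Pi^2(X)(p(e),p(e'))} \Omega_n(X,\delta).
\end{equation*}
Proposition \ref{prop:omega-2-coverings} gives an isomorphism between the two left-hand sides, induced on representatives by $(e_0,\dots,e_n)\mapsto (p(e_0),\dots,p(e_n))$.

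The next step is to observe that this isomorphism matches the two decompositions index by index. Since $\Pi^2(p):\Pi^2(E)\to \Pi^2(X)$ is a covering of groupoids (this is built into the construction of $l$-coverings via $\SS^l$, see Proposition \ref{prop:equivalent_l-covering} and the equivalence in Theorem \ref{theorem:coverings_equivalence}), the map
\begin{equation*}
p_*\colon \coprod_{\tilde e'\in p^{-1}(p(e'))} \Pi^2(E)(e,\tilde e') \;\longrightarrow\; \Pi^2(X)(p(e),p(e'))
\end{equation*}
is a bijection, with $p_*(\gamma)=p(\gamma)$. Moreover, if $(e_0,\dots,e_n)$ is a path representing $\gamma'\in\Pi^2(E)(e,\tilde e')$, then by Lemma \ref{lemma:an_equation_in_pi^l} applied in $\Pi^2(X)$, the image path $(p(e_0),\dots,p(e_n))$ represents exactly $p_*(\gamma')$. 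Hence the isomorphism of Proposition \ref{prop:omega-2-coverings} carries the summand $\Omega_n(E,\gamma')$ into $\Omega_n(X,p_*(\gamma'))$, and since both decompositions are direct and indexed compatibly by $p_*$, each such restriction is itself an isomorphism. Specializing to $\gamma'=\gamma$ yields the claim.

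The only real point to verify is that the bijection from Proposition \ref{prop:omega-2-coverings}, which a priori is only known to take $\AA_n(E,e,p^{-1}(p(e')))$ isomorphically onto $\AA_n(X,p(e),p(e'))$, in fact respects the finer cluster decomposition by elements of the fundamental groupoid. This is the only place where a potential obstacle could sit, but it is resolved by the remark above: the map on regular paths is $p$-termwise, and $p$ pushes the $\Pi^2(E)$-class of a path to the $\Pi^2(X)$-class of its image. The covering property of $\Pi^2(p)$ then upgrades this compatibility into a bijection of index sets, so the direct-sum isomorphism descends to each summand.
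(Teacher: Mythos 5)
Your proof is correct and follows essentially the same route as the paper's: combine the isomorphism $\Omega_n(E,e,p^{-1}(x'))\cong\Omega_n(X,p(e),x')$ from Proposition \ref{prop:omega-2-coverings} with the cluster decompositions of Proposition \ref{prop:clusters}, use that $\Pi^2(E)(e,p^{-1}(x'))\to\Pi^2(X)(p(e),x')$ is a bijection because $\Pi^2(p)$ is a covering of groupoids, and note that the image of $\Omega_n(E,\gamma')$ lands in $\Omega_n(X,p(\gamma'))$ so the direct-sum isomorphism restricts to each summand. The only cosmetic point is that the compatibility of classes under $p$ is just functoriality of $\Pi^2$ rather than an application of Lemma \ref{lemma:an_equation_in_pi^l}.
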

\begin{proof} Combining 
Proposition \ref{prop:omega-2-coverings} and Proposition \ref{prop:clusters} for any vertices $x,x'$ of $X$ and $e\in p^{-1}(x)$ we obtain that $p$ induces an isomorphism 
\begin{equation}
\bigoplus_{\gamma \in \Pi^2(E)(e,p^{-1}(x'))} \Omega_n(E,\gamma) \cong \bigoplus_{\beta \in \Pi^2(X)(x,x')} \Omega_n(X,\beta).
\end{equation}
Since $\Pi^2(E)\to \Pi^2(X)$ is a covering of groupoids, $\Pi^2(E)(e,p^{-1}(x'))\to \Pi^2(X)(x,x')$ is a bijection. Therefore, the summands are in one-to-one correspondence to each other. Using that the image of $\Omega_n(E,\gamma)$ lies in $\Omega_n(X,p(\gamma)),$ we obtain the assertion.
\end{proof}

\subsection{Spectral sequence of the universal 2-cover}

Let $X$ be a connected digraph and $p_U:U\to X$ be the universal $2$-covering. Since for any fixed basepoint we have an isomorphism (Proposition \ref{prop:Deck})  
\[\pi_1^2(X)\cong {\sf Aut}_{{\sf Cov}_2(X)} (p_U),\] 
we can identify $\pi^2_1(X)$ with the group ${\sf Aut}_{{\sf Cov}_2(X)} (p_U).$ This is convenient, because this definition is independent of the choice of a basepoint. The action of $\pi^2_1(X)$ on $U$ induces an action of $\pi^2_1(X)$ on $\Omega_n(U).$ This defines a structure of $\KK[\pi^2_1(X)]$-module on $\Omega_n(U).$

\begin{proposition}\label{prop:omega_U}
There is an isomorphism of left $\KK[\pi_1^2(X)]$-modules
\begin{equation}
\Omega_n(U)\cong \KK[\pi_1^2(X)]\otimes_{\KK} \Omega_n(X),
\end{equation}
which makes the diagram 
\begin{equation}\label{eq:commutative_triangle}
\begin{tikzcd}
\Omega_n(U) \ar[rr,"\cong"] \ar[rd,"\Omega_n(p)"'] & & 
\KK[\pi_1^2(X)] \otimes \Omega_n(X) \ar[ld,"\varepsilon\otimes {\sf id}"] 
\\
& \Omega_n(X) &
\end{tikzcd}
\end{equation}
commutative, where $\varepsilon:\KK[\pi_1^2(X)] \to \KK$ is the augmentation of the group algebra. 
\end{proposition}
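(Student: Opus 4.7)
The strategy is to combine the cluster decomposition of Proposition \ref{prop:clusters} with the identification of clusters under a $2$-covering (Proposition \ref{prop:cover:clusters}) and the torsor structure on fibers given by the Deck action (Proposition \ref{prop:Deck}). Since $p_U\colon U\to X$ is universal, the groupoid $\Pi^2(U)$ is indiscrete, so its morphisms are determined by their source and target. For each morphism $\beta\colon x\to x'$ in $\Pi^2(X)$, the fact that $\Pi^2(p_U)\colon\Pi^2(U)\to\Pi^2(X)$ is a covering of groupoids, combined with indiscreteness of $\Pi^2(U)$, shows that the source map provides a bijection between the set of lifts $L(\beta):=\{\gamma\in\mathrm{Mor}(\Pi^2(U))\mid \Pi^2(p_U)(\gamma)=\beta\}$ and the fiber $p_U^{-1}(x)$. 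By Proposition \ref{prop:Deck} the Deck action of $\pi_1^2(X)$ on $U$ is free and transitive on each fiber, so $L(\beta)$ inherits the structure of a $\pi_1^2(X)$-torsor. I would then choose a base lift $\tilde\beta\in L(\beta)$ for each $\beta$, so that every lift can be written uniquely as $h\cdot\tilde\beta$ with $h\in\pi_1^2(X)$.

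Next I would define the candidate isomorphism
\[
\Phi\colon\Omega_n(U)\longrightarrow \KK[\pi_1^2(X)]\otimes_{\KK}\Omega_n(X),\qquad \Phi(a)=h\otimes(p_U)_*(a) \text{ for } a\in\Omega_n(U,h\tilde\beta),
\]
where $(p_U)_*\colon\Omega_n(U,h\tilde\beta)\xrightarrow{\cong}\Omega_n(X,\beta)$ is the isomorphism of Proposition \ref{prop:cover:clusters}. Regrouping the cluster decomposition $\Omega_n(U)=\bigoplus_{\gamma}\Omega_n(U,\gamma)$ according to the image $\beta=\Pi^2(p_U)(\gamma)$ shows that $\Phi$ is a $\KK$-linear isomorphism: for each $\beta$ the subsum indexed by $L(\beta)$ maps isomorphically onto $\KK[\pi_1^2(X)]\otimes\Omega_n(X,\beta)$, and summing over all $\beta$ recovers $\KK[\pi_1^2(X)]\otimes\Omega_n(X)$.

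Finally, I would verify equivariance. The key identity is $k\cdot(h\tilde\beta)=(kh)\tilde\beta$ for $k\in\pi_1^2(X)$, which follows from tracing the induced Deck action on $\mathrm{Mor}(\Pi^2(U))$ through the source map $\mathrm{Mor}(\Pi^2(U))\to U_0$ and the formula \eqref{eq:Deck}. Combined with the relation $p_U\circ k=p_U$, which gives $(p_U)_*(k\cdot a)=(p_U)_*(a)$, this yields $\Phi(k\cdot a)=kh\otimes(p_U)_*(a)=k\cdot\Phi(a)$, so $\Phi$ is a map of left $\KK[\pi_1^2(X)]$-modules. Commutativity of \eqref{eq:commutative_triangle} is then immediate, since for $a\in\Omega_n(U,h\tilde\beta)$ we have $(\varepsilon\otimes\mathrm{id})(\Phi(a))=\varepsilon(h)(p_U)_*(a)=(p_U)_*(a)=\Omega_n(p_U)(a)$. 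The main technical point requiring careful attention is the precise bookkeeping of how the Deck action on $U_0$ translates to an action on lifts and therefore on clusters; once this identification $L(\beta)\cong\pi_1^2(X)$ as torsors is fixed by the choice of base lifts, all remaining verifications are formal.
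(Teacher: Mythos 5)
Your proposal is correct and follows essentially the same route as the paper: both arguments rest on the cluster isomorphism $\Omega_n(U,\gamma)\cong\Omega_n(X,p(\gamma))$ of Proposition \ref{prop:cover:clusters} together with the free transitive Deck action indexing the lifts of each morphism of $\Pi^2(X)$ by elements of $\pi_1^2(X)$. The only cosmetic difference is that the paper fixes this indexing canonically via the bijection $\theta:p_U^{-1}(x_0)\cong\pi_1^2(X)$ (recording $\theta(u')$ for the target vertex $u'$), which amounts to one particular choice of your base lifts $\tilde\beta$.
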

\begin{proof}
It is easy to see that for any vertices $x,x'$ the module $\Omega_n(U,p^{-1}(x),p^{-1}(x'))$ is a $\KK[\pi^2_1(X)]$-submodule of $\Omega_n(U).$ Moreover, there is an isomorphism of $\KK[\pi_1^2(X)]$-modules
\begin{equation}
\Omega_n(U)\cong \bigoplus_{x,x'} \Omega_n(U,p^{-1}(x),p^{-1}(x')).
\end{equation}
Therefore, it is sufficient to prove the isomorphism 
\begin{equation}
\Omega_n(U,p^{-1}(x),p^{-1}(x')) \cong \KK[\pi^2_1(X)] \otimes  \Omega_n(X,x,x')    
\end{equation}
for any vertices $x,x'.$  Further in the proof we fix a point $x',$ and consider this arbitrary point as the basepoint $x_0:=x'$ of $X$ and chose a basepoint $u_0\in p^{-1}(x_0).$ Hence, it is sufficient to prove the isomorphism 
\begin{equation}
\Omega_n(U,p^{-1}(x),p^{-1}(x_0)) \cong \KK[\pi^2_1(X)] \otimes  \Omega_n(X,x,x_0)    
\end{equation}
(We consider different points as a basepoint to define the isomorphism on different summands. That is why it is convenient for us to identify the group $\pi^2_1(X)$ with the group 
${\sf Aut}_{{\sf Cov}_2(X)}(U),$ which is independent of the choice of the basepoint). Recall that for any $u,u'\in U_0$ we denote by $\alpha_{u,u'}:u\to u'$ the unique morphism in $\Pi^2(U).$ Then $\Omega_n(U,u,u')\cong \Omega_n(U,\alpha_{u,u'}).$
We define a map 
\begin{equation}
f: \Omega_n(U,p^{-1}(x),p^{-1}(x_0)) \to \KK[\pi^2_1(X)] \otimes \Omega_n(X,x,x_0)    
\end{equation}
such that its restriction $f_{u,u'}:\Omega_n(U,u,u')\to \KK[\pi^2_1(X)] \otimes \Omega_n(X,x,x_0)$ is defined by the formula
\[
f_{u,u'}(\omega)=\theta(u') \otimes p(\omega)
\]
(for the definition of $\theta$ see \eqref{eq:theta}).
By Proposition \ref{prop:cover:clusters} $f_{u,u'}$ induces an isomorphism $\KK$-modules 
\begin{equation}
   \Omega_n(U,u,u') \cong  (\KK\cdot \theta(u')) \otimes \Omega_n(X,p(\alpha_{u,u'})). 
\end{equation}
If we take the direct sum by all $u\in p^{-1}(x_0)$ and $u'\in p^{-1}(x'),$ we obtain that $f$ is an isomorphism of $\KK$-modules. Now, in order to prove the isomorphism of $\KK[\pi_1^2(X)]$-modules, we only need to check that $f$ is a homomorphism of $\KK[\pi_1^2(X)]$-modules. Take $\gamma \in \pi_1^2(X).$ Since Deck transformations are automorphisms of covers, we obtain  $p(\gamma  \omega)=p(\omega).$ Using this and the definition of the Deck transformation \eqref{eq:Deck} we get  
\begin{equation}
f(\gamma \omega) 
= f_{\gamma  u, \gamma u'}(\gamma \omega) 
= 
\theta(\gamma  u') 
\otimes p(\omega) 
= \gamma \theta(u')  \otimes p(\omega) = \gamma  f(\omega). 
\end{equation}
The fact that  the triangle \eqref{eq:commutative_triangle} is commutative is obvious: $(\varepsilon\otimes {\sf id})(f_{u,u'}(\omega))= (\varepsilon\otimes {\sf id})(\theta(u')\otimes p(\omega))=p(\omega).$
\end{proof}

\begin{corollary}\label{cor:coinvariants_of_omega_u}
There is an isomorphism of complexes
\begin{equation}
\KK \otimes_{\KK[\pi_1^2(X)]}\Omega(U)\cong \Omega(X)
\end{equation}
defined by the formula $a\otimes \omega\mapsto ap(\omega).$
\end{corollary}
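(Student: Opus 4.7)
The plan is to derive this corollary directly from Proposition \ref{prop:omega_U} by passing to coinvariants.

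First, I would invoke Proposition \ref{prop:omega_U} to fix, for each $n$, an isomorphism of left $\KK[\pi_1^2(X)]$-modules
\[ \Omega_n(U) \cong \KK[\pi_1^2(X)] \otimes_\KK \Omega_n(X), \]
where $\pi_1^2(X)$ acts on the right-hand side only through the first tensor factor (by left multiplication), while $\Omega_n(X)$ carries the trivial action. These isomorphisms assemble into an isomorphism of complexes, because $\Omega_n(p)$ is a chain map and the triangle \eqref{eq:commutative_triangle} is natural in $n$; the differential on the right-hand side is $\mathrm{id} \otimes \partial_n$ (since $p$ is surjective on $\AA_n$ and the differential commutes with the group action).

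Next, I would tensor with $\KK$ over $\KK[\pi_1^2(X)]$ on the left, which computes the coinvariants. Using the standard identification
\[ \KK \otimes_{\KK[G]} \bigl(\KK[G] \otimes_\KK M\bigr) \cong M, \qquad a \otimes (g \otimes m) \longmapsto a\varepsilon(g) m, \]
applied with $G = \pi_1^2(X)$ and $M = \Omega_n(X)$, I obtain a natural isomorphism of $\KK$-modules
\[ \KK \otimes_{\KK[\pi_1^2(X)]} \Omega_n(U) \;\cong\; \Omega_n(X). \]
Since this identification is natural in $n$ and the differential on $\Omega(U)$ is $\KK[\pi_1^2(X)]$-linear, it induces the claimed isomorphism of complexes.

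Finally, I would verify the explicit formula by chasing the commutative triangle \eqref{eq:commutative_triangle}. For $\omega \in \Omega_n(U)$ with image $1 \otimes p(\omega)$ under the iso of Proposition \ref{prop:omega_U} (after absorbing the group element into $a$ via $a g = a\varepsilon(g)$ in the coinvariants), the composite sends $a \otimes \omega \mapsto a \otimes p(\omega) \mapsto a\, p(\omega)$. This is precisely the map stated in the corollary, so no further work is required. The only ``obstacle'' is a bookkeeping one: making sure the identification $\KK \otimes_{\KK[G]} \KK[G] \cong \KK$ is applied so that the augmentation $\varepsilon$ in the triangle \eqref{eq:commutative_triangle} matches the coinvariants projection, which is immediate from the definitions.
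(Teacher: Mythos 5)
There is a genuine gap in the first step. You assert that the degreewise isomorphisms $\Omega_n(U)\cong \KK[\pi_1^2(X)]\otimes_\KK\Omega_n(X)$ from Proposition \ref{prop:omega_U} ``assemble into an isomorphism of complexes,'' with the differential on the right-hand side being $\mathrm{id}\otimes\partial_n$. This is exactly what the paper declines to claim (see the remark immediately following the corollary), and for the isomorphism actually constructed in Proposition \ref{prop:omega_U} it is false. That isomorphism is defined summand by summand as $f_{u,u'}(\omega)=\theta(u')\otimes p(\omega)$, where the group element $\theta(u')$ records the head $u'$ of the paths involved and where, moreover, the basepoint used to define $\theta$ varies with the summand. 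The face map $d_n$ changes the head of a path, so the $d_n$-term of $f(\partial\omega)$ is tagged by $\theta(u_{n-1})$ while the corresponding term of $(\mathrm{id}\otimes\partial)(f(\omega))$ is tagged by $\theta(u')$; these differ. Your justification via the commutative triangle \eqref{eq:commutative_triangle} does not close this gap: commutativity of the triangle only pins down the composite $(\varepsilon\otimes\mathrm{id})\circ f=\Omega_n(p)$, and two maps can have equal composites with $\varepsilon\otimes\mathrm{id}$ without being equal, so ``naturality of the triangle in $n$'' does not imply that $f$ itself is a chain map.

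The repair is to reverse the logical order, which is what the paper does. First define the map $\KK\otimes_{\KK[\pi_1^2(X)]}\Omega(U)\to\Omega(X)$, $a\otimes\omega\mapsto a\,p(\omega)$, directly: it is well defined on coinvariants because $p(\gamma\omega)=p(\omega)$ for Deck transformations $\gamma$, and it is a chain map because $\Omega(p)$ is a chain map (being induced by a morphism of digraphs). Only then use Proposition \ref{prop:omega_U}: in each fixed degree $n$, the triangle \eqref{eq:commutative_triangle} identifies this map with the composite of $\mathrm{id}_\KK\otimes f$ and the canonical isomorphism $\KK\otimes_{\KK[G]}(\KK[G]\otimes_\KK\Omega_n(X))\cong\Omega_n(X)$, both of which are bijections; hence it is an isomorphism degreewise. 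The discrepancy in $f$ described above lies in the augmentation ideal and therefore disappears after tensoring with $\KK$, which is why the degreewise statement suffices even though the chain-level statement fails. Your final verification paragraph is essentially this computation, but as written it rests on the unjustified (and false) intermediate claim.
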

\begin{proof}
    Using that $p(\gamma \omega)=p(\omega)$ for any $\gamma\in \pi_1^2(X),$ we see that there is a well-defined morphism of complexes $\KK \otimes_{\KK[\pi_1^2(X)]}\Omega(U)\to \Omega(X)$ defined by $a\otimes \omega\mapsto ap(\omega).$  Then we need to check that for any $n$ the map $\KK \otimes_{\KK[\pi_1^2(X)]}\Omega_n(U)\to \Omega_n(X)$ is an isomorphism. It follows from the commutativity of the diagram \eqref{eq:commutative_triangle} and the isomorphism $\KK\otimes_{\KK[\pi_1^2(X)]}\KK[\pi_1^2(X)]\cong \KK$.
\end{proof}

\begin{remark}
Note that we don't claim that the complexes $\Omega(U)$ and $\KK[\pi_1^2(X)]\otimes \Omega(X)$ are isomorphic. 
\end{remark}

\begin{theorem}\label{theorem:covering_spectral}
Let $X$ be a connected digraph and 
$p_U:U\to X$ be the universal $2$-covering.Then there is a spectral sequence $E$ of homological type such that  
\begin{equation}
    E^2_{i,j} = H_i(\pi^2_1(X),\PH_j(U)) \Rightarrow \PH_{i+j}(X),
\end{equation}
where $\PH_*(U)$ is treated as a $\pi^2_1(X)$-module with the action by Deck transformations, and $H_i(\pi_1^2(X),-)$ is the group homology of the group $\pi_1^2(X).$
\end{theorem}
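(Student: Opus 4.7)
Set $G:=\pi_1^2(X)$ for brevity. The plan is a standard hyperhomology argument built on Proposition \ref{prop:omega_U} and Corollary \ref{cor:coinvariants_of_omega_u}: the complex of path chains of the universal $2$-cover is a complex of \emph{free} $\KK[G]$-modules whose coinvariants recover the path chain complex of $X$, so its derived coinvariants can be computed in two ways.

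First I would choose a projective resolution $P_\bullet \twoheadrightarrow \KK$ of the trivial right $\KK[G]$-module, and form the first-quadrant double complex
\begin{equation*}
C_{p,q} \;:=\; P_p \otimes_{\KK[G]} \Omega_q(U),
\end{equation*}
with horizontal differential induced by $P_\bullet$ and vertical differential induced by $\Omega(U)$. Since this double complex lies in the first quadrant, both spectral sequences of the filtered total complex converge to $H_*(\mathrm{Tot}(C_{*,*}))$.

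Next I would run the two spectral sequences. Filtering by columns first, since each $P_p$ is projective hence flat over $\KK[G]$, the vertical homology is
\begin{equation*}
H_q^{\mathrm{v}}(C_{p,*}) \;\cong\; P_p \otimes_{\KK[G]} H_q(\Omega(U)) \;=\; P_p \otimes_{\KK[G]} \PH_q(U),
\end{equation*}
and then the horizontal homology of this yields $E^2_{p,q} = H_p(G, \PH_q(U))$, which is exactly the asserted $E^2$-page. Filtering in the other direction, Proposition \ref{prop:omega_U} tells us that each $\Omega_q(U)$ is \emph{free} (hence flat) as a left $\KK[G]$-module, so
\begin{equation*}
H_p^{\mathrm{h}}(C_{*,q}) \;\cong\; \mathrm{Tor}_p^{\KK[G]}(\KK,\Omega_q(U))
\;=\;\begin{cases} \KK \otimes_{\KK[G]} \Omega_q(U), & p=0,\\ 0, & p>0.\end{cases}
\end{equation*}
By Corollary \ref{cor:coinvariants_of_omega_u} this identifies with $\Omega_q(X)$ (and the horizontal collapse is compatible with the differential induced by $\Omega(U)$, which descends to the differential of $\Omega(X)$). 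This second spectral sequence therefore collapses onto the column $p=0$ at the $E^2$-page, with $E^2_{0,q} = \PH_q(X)$, so $H_n(\mathrm{Tot}(C_{*,*})) \cong \PH_n(X)$. Combining the two computations produces the desired spectral sequence.

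The only genuine subtlety, and the place to be careful, is checking that the Deck-transformation $\KK[G]$-module structure used in the statement agrees with the $\KK[G]$-module structure on $\Omega_q(U)$ that makes Proposition \ref{prop:omega_U} and Corollary \ref{cor:coinvariants_of_omega_u} simultaneously valid; this is built into Proposition \ref{prop:omega_U}, so once one observes that the right-hand side of the triangle \eqref{eq:commutative_triangle} is exactly $\KK \otimes_{\KK[G]}(-)$ applied to the left-hand side, the two spectral sequences above are the two spectral sequences of the double complex, and the theorem follows. Convergence is automatic from first-quadrantness, and no further identification of differentials is required beyond what the standard machinery of a Cartan--Eilenberg-type double complex provides.
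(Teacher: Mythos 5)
Your proposal is essentially the paper's proof: the paper invokes the two hyperhomology spectral sequences of the coinvariants functor applied to $\Omega(U)$ (citing Weibel), which is exactly the double complex $P_\bullet \otimes_{\KK[G]}\Omega_\bullet(U)$ you construct, and both arguments collapse one of the two spectral sequences using Proposition \ref{prop:omega_U} and Corollary \ref{cor:coinvariants_of_omega_u}. The one justification you should adjust: Proposition \ref{prop:omega_U} gives $\Omega_q(U)\cong \KK[G]\otimes_\KK \Omega_q(X)$, which is an \emph{induced} module but not necessarily free (or even flat) over $\KK[G]$, since $\Omega_q(X)$ is only a submodule of a free $\KK$-module and need not be $\KK$-free for general $\KK$; the vanishing $\mathrm{Tor}_p^{\KK[G]}(\KK,\KK[G]\otimes_\KK M)=0$ for $p>0$ still holds for arbitrary $\KK$-modules $M$ because $P_\bullet\otimes_{\KK[G]}(\KK[G]\otimes_\KK M)\cong P_\bullet\otimes_\KK M$ and each $P_p$ is $\KK$-flat, which is the acyclicity of induced modules that the paper gets by rewriting $\Omega_q(U)\cong\ZZ[G]\otimes_\ZZ\Omega_q(X)$ and citing Brown, Ch.~III, Cor.~6.6.
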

\begin{proof}
We take the complex $\Omega_n(U),$ the functor of $\pi_1^2(X)$-coinvarints $H_0(\pi_1^2(X), -  ):{\sf Mod}(\KK[\pi_1^2(X)]) \to {\sf Mod}(\KK)$ and consider two associated hyperhomology spectral sequences ${}^{I}E,{}^{I\!I}E$ (\cite[Prop.5.7.6]{weibel1995introduction}). 
\begin{equation}
\begin{split}
{}^{I}E_{i,j}^1 = H_j(\pi_1^2(X), \Omega(U)_j)) &\Rightarrow \mathbb{H}_{i+j}( \pi_1^2(X), \Omega(U)).  \\ 
{}^{I\!I}E_{i,j}^2 = H_i( \pi_1^2(X),\PH_j(U)) &\Rightarrow  \mathbb{H}_{i+j}( \pi_1^2(X), \Omega(U)).
\end{split}
\end{equation}
If $S$ is a set and $\KK\cdot  S$ is a free module generated by $S,$ then for any $\KK$-module $M$ we have isomorphisms $(\KK\cdot S) \otimes_\KK M \cong M^{\oplus S} \cong (\ZZ \cdot S) \otimes_{\ZZ} M.$ Therefore, 
 the isomorphism $\KK[\pi_1^2(X)]\cong \KK\cdot \pi_1^2(X)$ and Proposition \ref{prop:omega_U} imply 
\begin{equation}
\Omega_n(U)\cong \KK[\pi_1^2(X)]\otimes_\KK \Omega_n(X)\cong \mathbb{Z}[\pi_1^2(X)]\otimes_{\mathbb{Z}} \Omega_n(X).    
\end{equation}
Combining this with \cite[Ch.III, Cor.6.6]{brown2012cohomology} we obtain      $H_j(\pi_1^2(X),\Omega(U))=0$ for $j\neq 0$. Corollary \ref{cor:coinvariants_of_omega_u} implies $H_0(\pi_1^2(X),\Omega(U))=\Omega(X).$ Therefore, ${}^IE^1_{i,j}=0$ for $j\ne 0$ and 
$ {}^IE^1_{*,0} = \Omega(X).$ It follows that $\mathbb{H}_*( \pi_1^2(X), \Omega(U))= \PH_*(X).$ Then ${}^{I\!I}E$ is the required spectral sequence. 
\end{proof}

\begin{corollary}\label{cor:uni_cov}
Under assumption of Theorem \ref{theorem:covering_spectral}, if $\widetilde{\PH}_*(U)=0 ,$ then 
\begin{equation}
\PH_*(X)\cong H_*(\pi_1^2(X)).
\end{equation}
\end{corollary}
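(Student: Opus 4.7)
The plan is to feed the hypothesis into the spectral sequence of Theorem~\ref{theorem:covering_spectral} and observe that it degenerates immediately. The only pieces that need checking are what the assumption $\widetilde{\PH}_*(U)=0$ says about $\PH_0(U)$ as a $\pi_1^2(X)$-module, and that this forces the $E^2$-page to be concentrated in a single row.

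First, I would unpack the reduced path homology hypothesis. By definition of a universal $2$-covering, the digraph $U$ is connected with trivial $\pi_1^2(U)$. Connectedness of the underlying undirected graph implies, via the usual computation of $H_0$ of the path complex (using $\Omega_0(U)=\KK\cdot U_0$, $\Omega_1(U)=\KK\cdot A(U)$ and $\partial(u,u')=u'-u$), that $\PH_0(U)\cong \KK$. So the assumption $\widetilde{\PH}_*(U)=0$ is equivalent to
\[
\PH_j(U)=\begin{cases} \KK, & j=0, \\ 0, & j\geq 1. \end{cases}
\]

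Next, I would verify that the Deck action of $\pi_1^2(X)\cong \mathrm{Aut}_{\mathsf{Cov}_2(X)}(p_U)$ on $\PH_0(U)=\KK$ is trivial. Every Deck transformation permutes $U_0$ and, since all vertices of the connected digraph $U$ represent the same class in $\PH_0(U)$, the induced action on $\KK$ is the identity. Consequently $\PH_0(U)\cong \KK$ as a trivial $\pi_1^2(X)$-module, and
\[
E^2_{i,j} = H_i(\pi_1^2(X), \PH_j(U)) = \begin{cases} H_i(\pi_1^2(X)), & j=0, \\ 0, & j\geq 1. \end{cases}
\]

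Finally, because the $E^2$-page is concentrated in the row $j=0$, all differentials $d^r$ for $r\geq 2$ have either source or target equal to zero, so $E^\infty = E^2$. The filtration on $\PH_n(X)$ has a single nonzero associated graded piece $E^\infty_{n,0}=H_n(\pi_1^2(X))$, giving the claimed isomorphism $\PH_*(X)\cong H_*(\pi_1^2(X))$. The only real point of substance is the triviality of the $\pi_1^2(X)$-action on $\PH_0(U)$; everything else is a formal collapse argument from Theorem~\ref{theorem:covering_spectral}.
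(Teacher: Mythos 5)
Your proof is correct and is exactly the argument the paper intends (the corollary is stated without an explicit proof): the hypothesis together with connectedness of $U$ gives $\PH_0(U)\cong\KK$ with trivial Deck action and $\PH_j(U)=0$ for $j\geq 1$, so the spectral sequence of Theorem \ref{theorem:covering_spectral} is concentrated in the row $j=0$ and collapses. Your identification of the triviality of the $\pi_1^2(X)$-action on $\PH_0(U)$ as the one point needing a genuine check is also the right call.
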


\subsection{Examples}

It is easy to check that the morphism of digraphs illustrated on the following picture is a $2$-covering but not a $3$-covering.

\ 

\vbox{
\[ E: \ 
\begin{tikzcd}[row sep=5mm, column sep=5mm]
\dots \ar[r] & -2 \ar[r] & -1 \ar[r] & 0 \ar[r] & 1 \ar[r] & 2 \ar[r] & 3 \ar[r] &  \dots    
\end{tikzcd} 
\]

\[\ \hspace{3mm} \Big\downarrow \  \  p\]

\[
X: \ 
\begin{tikzcd}[row sep=3mm, column sep=3mm]
& 1 \ar[dr]  & \\ 
0\ar[ur] & & 2 \ar[ll]
\end{tikzcd} \hspace{5mm} \ \]
}
\noindent Here $p(x) = x ({\rm mod}\ 3).$ One can generalize this example considering a cycle of length $n\geq 3.$ This will be an example of an $n-1$-covering but not an $n$-covering. In this case it is easy to see that $\widetilde{\PH}_*(E)=0,$ $\pi_1^2(E)=1$ and $\pi_1^2(X)=\ZZ.$ Therefore Corollary \ref{cor:uni_cov} implies that  $\PH_*(X)=H_*(\ZZ).$ 

In the next example we consider a directed triangle instead of the cyclic triangle from the previous example. It easy to check that morphism of digraphs illustrated on the following picture is a 1-covering but not a 2-covering.

\ 

\vbox{
\[ E: \ 
\begin{tikzcd}[row sep=5mm, column sep=5mm]
\dots \ar[r]  & -2 \ar[r] & -1   & 0 \ar[l] \ar[r]   & 1 \ar[r]  & 2   & 3 \ar[l] \ar[r]   &  \dots     
\end{tikzcd} 
\]

\[\ \hspace{4mm} \Big\downarrow \  \  p\]

\[
X: \ 
\begin{tikzcd}[row sep=3mm, column sep=3mm]
& 1 \ar[dr]  & \\ 
0\ar[ur] \ar[rr] & & 2 
\end{tikzcd} \hspace{5mm} \ \]
}
\noindent The fact that it is not a 2-covering can be checked by hand but it also obviously follows from the fact that $\pi^2_1(X)=1$ in this case, and hence, $X$ has no non-trivial $2$-coverings.

A more complicated example of a universal 2-covering is illustrated on the following picture.

\ 

\vbox{
\[ U: 
\begin{tikzcd}[row sep=5mm, column sep=5mm]
\cdots \ar[r]\ar{rd}  
& x_{-2} \ar[r]\ar{rd}
& x_{-1}\ar[r]\ar{rd}
& x_{0} \ar[r]\ar{rd}
& x_{1} \ar[r]\ar{rd}
& x_{2} \ar[r]\ar{rd}
& x_{3} \ar[r]\ar{rd}
& \cdots \\
\cdots \ar{r}\ar{ru}
& y_{-2}\ar{r}\ar{ru}
& y_{-1}\ar{r}\ar{ru}
& y_{0}\ar{r}\ar{ru}
& y_{1}\ar{r}\ar{ru}
& y_{2}\ar{r}\ar{ru}
& y_{3}\ar{r}\ar{ru}
& \cdots 
\end{tikzcd} 
\]  

\[\ \hspace{10mm} \Big\downarrow \  \  p_U\]

\[ X: 
\begin{tikzcd}
x_0 \ar{rrd} \ar[rrr]  &&& 
x_1 \ar[ddd] \ar{ddl}
\\
& y_0 
\ar{r} 
\ar{rru} 
& 
y_1 \ar{d} 
\ar{ddr} \\
&
y_3
\ar{u} 
\ar{uul}
& 
y_2 
\ar{l} 
\ar{lld} 
\\
x_3  
\ar{uur} \ar[uuu] 
&&& 
x_2 \ar{llu} \ar[lll]
\end{tikzcd}
\]
}
\noindent In this case one can also compute that $\widetilde{\PH}_*(U)=0,$  $\pi_1^2(U)=1$ and $\pi_1^2(X)={\sf Aut}(p_U)=\ZZ.$ Therefore $\PH_*(X)=H_*(\ZZ).$

\section{\bf Cayley digraphs}
\subsection{1-fundamental group of a Cayley digraph} 
Let $G$ be a group and $S\subseteq G\setminus \{1\}$ be a generating subset. Then the Cayley digraph $\Cay(G,S)$ is a digraph, whose set of vertices is $G$ and the set of arrows consists of couples $(g,gs),$ where $g\in G$ and $s\in S.$ The assumption that $S$ generates $G$ implies that the digraph is connected. There is also a natural choice of a basepoint: $1\in G.$ So $\Cay(G,S)$ is a connected pointed digraph. 

We denote by $F(S)$ the free group generated by $S,$ by  $\iota:S\to F(S)$ the canonical embedding and we set $\bar s=\iota(s).$ We use these notations in order to distinguish $s\in G$ from $\bar s\in F(S).$ Consider the epimorphism $\sigma : F(S)\epi G$ that sends $\bar s$ to $s$ and set
\begin{equation}
\Rel(G,S) = {\sf Ker}(\sigma : F(S)\epi G).
\end{equation}

We denote by $Q_S$ the quiver with one vertex and $(Q_S)_1=S.$ The free groupoid defined by $Q_S$ is the free group generated by $S$ (which is treated as a groupoid with one object) $F(Q_S)=F(S).$ Consider a morphism of quivers 
\begin{equation}
 r :Q(\Cay(G,S))\to Q_S    
\end{equation}
that sends an arrow $(g,gs)$ to $s,$ for $s\in S.$ 

\begin{proposition} 
The morphism of free groupoids $F(r) : F(\Cay(G,S))\to F(S)$ is a covering  groupoid corresponding to the subgroup 
$\Rel(G,S)$ of $F(S).$ In particular, it induces an isomorphism 
\begin{equation}\label{eq:iso_pi^1-Cay}
r_* :\pi_1^1(\Cay(G,S)) \cong \Rel(G,S).
\end{equation}
\end{proposition}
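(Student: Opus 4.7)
The plan is to verify first that the quiver morphism $r$ is a covering of quivers, then lift this to the free groupoids by Example \ref{ex:F(p)-covering}, and finally identify the classifying subgroup via Proposition \ref{prop:galua-grpd} and Lemma \ref{lemma:stab}.

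First, I would check that $r:Q(\Cay(G,S))\to Q_S$ is a covering quiver. The quiver $Q_S$ has a single vertex $*$ with fibre $r^{-1}(*)=G$. Given $s\in S$ and $g\in G$, the arrow $(g,gs)$ is the unique element of $Q(\Cay(G,S))_1$ with tail $g$ lying over $s$, and $(gs^{-1},g)$ is the unique element with head $g$ lying over $s$; the degenerate loop $1_*$ lifts uniquely to $(g,g)$. This gives the required lifting property, so $r$ is a covering quiver, and by Example \ref{ex:F(p)-covering} the induced map $F(r):F(\Cay(G,S))\to F(Q_S)=F(S)$ is a covering of groupoids. Taking basepoints $1\in G$ and $*$ makes it pointed, and since $S$ generates $G$ the digraph $\Cay(G,S)$, and hence $F(\Cay(G,S))$, is connected.

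Next, I would compute $F(r)_*(\pi_1(F(\Cay(G,S)),1))\subseteq F(S)$. A morphism $1\to 1$ in $F(\Cay(G,S))$ is represented by a word of signed arrows $(g_{i-1},g_i)^{\epsilon_i}$ with $g_0=1$, $g_n=1$, and $g_i = g_{i-1}s_i^{\epsilon_i}$ for some $s_i\in S$ and $\epsilon_i\in\{\pm 1\}$. Applying $F(r)$ sends this word to $\bar s_1^{\epsilon_1}\cdots \bar s_n^{\epsilon_n}\in F(S)$, whose image under $\sigma$ is $g_n=1$; thus the image of $F(r)_*$ lies in $\Rel(G,S)$. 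Conversely, any $w=\bar s_1^{\epsilon_1}\cdots \bar s_n^{\epsilon_n}\in \Rel(G,S)$ lifts uniquely along $F(r)$ starting at $1$ (the lift is determined step by step by the unique-lifting property of $r$, as in Lemma \ref{lemma:path_lifting}), and $\sigma(w)=1$ means the lift returns to $1$, so $w$ is in the image of $F(r)_*$.

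Combining these observations, the image of $F(r)_*$ equals $\Rel(G,S)$, and Lemma \ref{lemma:stab} ensures that $F(r)_*$ is injective; hence $\pi_1(F(\Cay(G,S)),1)\cong \Rel(G,S)$. Since the relations \eqref{eq:rel_l} with $n,m\leq 1$ hold automatically in the free groupoid, Proposition \ref{prop:pi^l} gives $\Pi^1(\Cay(G,S))\cong F(\Cay(G,S))$, so $\pi_1^1(\Cay(G,S))\cong \pi_1(F(\Cay(G,S)),1)\cong \Rel(G,S)$, and Proposition \ref{prop:galua-grpd} identifies $\Rel(G,S)$ as the subgroup classifying the pointed connected covering $F(r)$. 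There is no serious obstacle here; the main care needed is in bookkeeping the signs when expressing a loop in $F(\Cay(G,S))$ as a word in $F(S)$ and verifying that no nontrivial relations arise on either side.
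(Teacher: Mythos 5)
Your proposal is correct and follows essentially the same route as the paper: verify that $r$ is a covering quiver, invoke Example \ref{ex:F(p)-covering} to get a covering of groupoids, use Lemma \ref{lemma:stab}/Proposition \ref{prop:galua-grpd} for injectivity, and identify the image with $\Rel(G,S)$ via the observation that $\sigma(F(r)(\gamma))=\mathrm{dom}(\gamma)^{-1}\mathrm{cod}(\gamma)$. The only cosmetic difference is that you establish this last identity by an explicit word-by-word computation (and make the surjectivity lifting argument explicit), whereas the paper deduces it by noting that $\sigma\circ F(r)$ and $\mathrm{dom}(-)^{-1}\mathrm{cod}(-)$ are groupoid morphisms agreeing on generators.
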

\begin{proof} 
It is easy to see that $r$ is a covering quiver. Example \ref{ex:F(p)-covering} shows that $F(r)$ is a covering groupoid. Note that $F(\Cay(G,S))=\Pi^1(\Cay(G,S)).$ By Proposition \ref{prop:galua-grpd} the map $\pi_1^1(\Cay(G,S)) \to F(S)$ is injective.  So we only need to prove that the image of $\pi_1^1(\Cay(G,S))\to F(S)$ is $\Rel(G,S).$

We claim that for any morphism $\gamma$ in $ F(Q(\Cay(G,S)))$ there is an equation $\sigma(F(r)(\gamma)) ={\sf dom}(\gamma)^{-1}  {\sf cod}(\gamma)$ in $G.$ Indeed, these two maps $\sigma F(r)$ and ${\sf dom}(-)^{-1} {\sf cod}(-)$ are both morphisms of groupoids and they coincide on the generating set $(g,gs).$ Therefore, ${\sf dom}(\gamma)={\sf cod}(\gamma)$ if and only if $F(r)(\gamma)\in \Rel(G,S).$ 
\end{proof}

\subsection{\textit{l}-fundamental groupoid of a Cayley digraph}

We denote by $W^l(S)\subseteq F(S)$ the set of all elements of the free group $F(S)$ that can be presented as $(\bar s_1 \dots  \bar s_n) \cdot (\bar  t_1\dots \bar t_m)^{-1},$ where $n,m\leq l$ and $s_i,t_j\in S$. Consider a normal subgroup $\Rel^l(G,S)$ of $F(S)$ generated by the intersection $\Rel(G,S)\cap W^l(S).$
\begin{equation}
\Rel^l(G,S) = \langle \Rel(G,S) \cap W^l(S) \rangle^{F(S)}. 
\end{equation}
We also set
\begin{equation}
\rho^l(G,S) = \Rel(G,S)/ \Rel^l(G,S).    
\end{equation}

Further we treat  $\Pi^l(X)$ as a quotient of $\Pi^1(X)$ by relations \eqref{eq:rel_l}. In particular, $\pi_1^l(X)$ is a quotient of $\pi_1^1(X).$

\begin{proposition} \label{prop:pi^l-Cay}
The isomorphism \eqref{eq:iso_pi^1-Cay} induces an isomorphism 
\begin{equation}
\pi^l_1(\Cay(G,S)) \cong \rho^l(G,S).
\end{equation}
\end{proposition}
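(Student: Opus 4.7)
The plan is to compute the kernel $K$ of the surjective quotient homomorphism $\pi_1^1(\Cay(G,S)) \epi \pi_1^l(\Cay(G,S))$ and show that, under the isomorphism $r_* \colon \pi_1^1(\Cay(G,S)) \cong \Rel(G,S)$ of \eqref{eq:iso_pi^1-Cay}, it corresponds exactly to $\Rel^l(G,S)$. Passing to quotients then yields the desired isomorphism $\pi_1^l(\Cay(G,S)) \cong \rho^l(G,S)$.

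By Proposition \ref{prop:pi^l}, $\Pi^l(\Cay(G,S))$ is obtained from $\Pi^1(\Cay(G,S)) = F(\Cay(G,S))$ by imposing, for every vertex $g \in G$ and every pair of tuples $s_1, \dots, s_n$ and $t_1, \dots, t_m$ in $S$ with $n, m \leq l$ and $s_1 \dots s_n = t_1 \dots t_m$ in $G$, the identification of the two associated paths from $g$ to $g s_1 \dots s_n$. Equivalently, this forces the loop $w_{g,\mathbf s, \mathbf t}$ at $g$ obtained by concatenating the first path with the reverse of the second to become trivial. A standard fact about quotients of connected groupoids identifies $K$ as the normal subgroup of $\pi_1^1(\Cay(G,S))$ generated by the elements $\gamma_g \cdot w_{g,\mathbf s,\mathbf t} \cdot \gamma_g^{-1}$, where $\gamma_g \colon 1 \to g$ is an arbitrarily chosen morphism in $\Pi^1(\Cay(G,S))$ and $(g, \mathbf s, \mathbf t)$ ranges over all such data.

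Next I would transport this description across the isomorphism $F(r)$. By the definition of $r$, the loop $w_{g,\mathbf s,\mathbf t}$ maps to $\bar s_1 \dots \bar s_n \cdot (\bar t_1 \dots \bar t_m)^{-1}$, and as $(g, \mathbf s, \mathbf t)$ varies these elements sweep out exactly $\Rel(G,S) \cap W^l(S)$. Any morphism $\gamma_g$ maps to some $f \in F(S)$ with $\sigma(f) = g$, and since $S$ generates $G$, by varying $\gamma_g$ one realises \emph{every} element of $F(S)$ as such a conjugating $f$. Hence $r_*(K)$ is the subgroup of $\Rel(G,S)$ generated by $\{\, f w f^{-1} : f \in F(S),\ w \in \Rel(G,S) \cap W^l(S)\,\}$. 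Since $\Rel(G,S) \triangleleft F(S)$, each such $f w f^{-1}$ already lies in $\Rel(G,S)$, and the subgroup they generate is automatically closed under $F(S)$-conjugation, so it coincides with the $F(S)$-normal closure of $\Rel(G,S) \cap W^l(S)$, which is $\Rel^l(G,S)$ by definition.

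The main obstacle I anticipate is a clean treatment of the groupoid lemma used in the second paragraph, characterising the kernel of the homomorphism on $\pi_1$ induced by a congruence quotient of a connected groupoid; no analogous fact is cited earlier in the paper, so I would either spell this out in a short lemma or cite a standard reference. A secondary point requiring a moment of care is the verification that the normal closure inside $\Rel(G,S)$ produced by groupoid theory agrees with the $F(S)$-normal closure appearing in the definition of $\Rel^l(G,S)$, but the normality of $\Rel(G,S)$ in $F(S)$ resolves this immediately.
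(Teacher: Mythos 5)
Your proposal is correct and follows essentially the same route as the paper: both identify the kernel of $\pi_1^1(\Cay(G,S))\to\pi_1^l(\Cay(G,S))$ with the vertex group of the totally disconnected normal subgroupoid generated by the loops $w_{g,\mathbf{s},\mathbf{t}}$ (the groupoid fact you flag as needing a reference is exactly what the paper cites from \cite[(8.3.2)]{brown2006topology}), and then transport it through $r_*$ to the $F(S)$-normal closure of $\Rel(G,S)\cap W^l(S)$. If anything, your bookkeeping --- letting the conjugating morphisms $\gamma_g$ range over all base vertices from the outset, so that the image subgroup is visibly closed under $F(S)$-conjugation --- is marginally cleaner than the paper's, which first forms the normal closure of the loops based at $1$ alone and then checks that the conjugates at other vertices are already contained in it.
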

\begin{proof} For simplicity in the proof we omit $\Cay(G,S)$ and $(G,S)$ in notations: $\Pi^1=\Pi^1(\Cay(G,S)), \Rel=\Rel(G,S)$ and so on. For a sequence of elements $s_1,\dots s_n\in S$ and $g\in G$ we consider the morphism 
\[
w_g(s_1,\dots s_n)=(g,gs_1)\cdot (gs_1,gs_1s_2) \dots (gs_1\dots s_{n-1},gs_1\dots s_n)\] 
in $\Pi^1(g,gs_1\dots s_n)$. Denote by $H$ the normal subgroup of $\pi_1^1$ generated by elements of the form $w_1(s_1,\dots, s_n)\cdot w_1(t_1,\dots,t_m)^{-1},$ where $s_1\dots s_n = t_1 \dots t_m $ and $n,m\leq l.$ We just need to prove that $H$ is the kernel of the map $\pi_1^1\to \pi_1^l$ and 
$r_*(H)=\Rel^l.$ The fact that the image of $H$ in $\Rel$ is $\Rel^l$ is straightforward: the image of $w_1(s_1,\dots, s_n)\cdot w_1(t_1,\dots,t_m)^{-1}$ is $(\bar s_1 \dots \bar s_n)\cdot (\bar t_1\dots \bar t_m)^{-1}\in W^l(S).$ Prove that $H$ is the kernel of $\pi_1^1\to \pi_1^l.$ 

By Proposition \ref{prop:pi^l} the groupoid $\Pi^l$ is the quotient of $\Pi^1$ by the minimal congruence relation such that $w_g(s_1,\dots,s_n)\sim w_g(t_1,\dots,t_m)$
where $s_1 \dots s_n=t_1\dots t_m,$ $s_i,t_i\in S,$  $g\in G$ and $n,m\leq l.$ Note that the morphism $\Pi^1\to \Pi^l$ is identical on objects, and hence $\Pi^l=\Pi^1/\mathpzc{H},$ where $\mathpzc{H}$ is a totally disconnected normal subgroupoid \cite[(8.3.2)]{brown2006topology}. The totally disconnected normal subgroupoid $\mathpzc{H}$ is generated by the elements of the form $w_g(s_1,\dots,s_n) \cdot w_g(t_1,\dots,t_m)^{-1}.$ Then we need to prove that $H=\mathpzc{H}(1,1).$ The group $ \mathpzc{H}(1,1)$ is the subgroup of $\pi_1^1$ generated by elements of the form
\begin{equation}\label{eq:the_element}
\gamma \cdot  w_g(s_1,\dots,s_n) \cdot w_g(t_1,\dots,t_m)^{-1} \cdot \gamma^{-1},    
\end{equation}
where $\gamma\in \Pi^1(1,g),$  $s_1 \dots s_n = t_1 \dots t_m$ and $n,m\leq l.$ So we just need to prove that an element of the form \eqref{eq:the_element} is in $H.$ Since $r_*: \pi_1^1 \to \Rel$ is an isomorphism and $r_*(H)=\Rel^l$, it is enough to check that the image of the element \eqref{eq:the_element} in $\Rel$ is in $\Rel^l.$ The image is equal to $F(r)(\gamma)\cdot (s_1\dots s_n)\cdot (t_1\dots t_m)^{-1} \cdot F(r)(\gamma)^{-1},$ which is conjugated to the element $(s_1\dots s_n)\cdot (t_1\dots t_m)^{-1}\in \Rel^l.$
\end{proof}

\subsection{Universal \textit{l}-covering of a Cayley digraph} 

For a subset $S\subseteq G$ we set 
\begin{equation}
    S^{[l]} = \{s_1\cdot {\dots}\cdot s_n\mid s_i\in S,\ 0\leq n\leq l\}.
\end{equation}
Note that for $n=0$ we obtain $1\in S^{[l]}.$

\begin{proposition}\label{prop:l-covering-Caylay}
Let $\varphi :H\to G$ be a group epimorphism and let $T\subseteq H\setminus\{1\},$ $S\subseteq G\setminus\{1\}$ be generating subsets such that $\varphi$ induces bijections $T\cong S$ and $T^{[l]}\cong S^{[l]}.$ Then the morphism of digraphs
\begin{equation}
\Cay(H,T) \to \Cay(G,S)
\end{equation}
is an $l$-covering. 
\end{proposition}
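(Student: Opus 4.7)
The plan is to verify condition (3) of Proposition \ref{prop:covering_description1} directly for the map $p:\Cay(H,T)\to \Cay(G,S)$ induced by $\varphi$ on vertices. The hypotheses are tailored for this: the bijection $T\cong S$ will deliver the $1$-covering property, while the bijection $T^{[l]}\cong S^{[l]}$ will deliver the path-uniqueness property.

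First I would check that $p$ is a morphism of digraphs and a $1$-covering. An arrow $(h,ht)$ of $\Cay(H,T)$ has image $(\varphi(h),\varphi(h)\varphi(t))$, which is an arrow of $\Cay(G,S)$ since $\varphi(t)\in S$ by the bijection $T\cong S$ (note $\varphi(t)\neq 1$, because $1\in T^{[l]}$ would otherwise have two preimages in $T^{[l]}$). Given an arrow $(g,gs)$ of $\Cay(G,S)$ and a lift $h\in\varphi^{-1}(g)$ of its tail, the unique $t\in T$ with $\varphi(t)=s$ produces the lift $(h,ht)$; uniqueness of the lift follows because any other arrow $(h,ht')$ with $\varphi(t')=s$ forces $t'=t$. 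Lifting of heads is symmetric.

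Next I would verify the path-uniqueness clause of condition (3). Given paths $(h_0,\dots,h_n)$ and $(h_0',\dots,h_m')$ in $\Cay(H,T)$ with $0\leq n,m\leq l$, write $h_i=h_0 t_1\cdots t_i$ and $h_j'=h_0' t_1'\cdots t_j'$ for some $t_i,t_j'\in T$; note that $t_1\cdots t_n\in T^{[l]}$ and $t_1'\cdots t_m'\in T^{[l]}$. In the first case $h_0=h_0'$ and $\varphi(h_n)=\varphi(h_m')$, rearranging yields $\varphi(t_1)\cdots \varphi(t_n)=\varphi(t_1')\cdots \varphi(t_m')$ in $S^{[l]}$; since $\varphi$ restricts to a bijection $T^{[l]}\to S^{[l]}$, we conclude $t_1\cdots t_n=t_1'\cdots t_m'$ in $H$, hence $h_n=h_m'$. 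In the second case $\varphi(h_0)=\varphi(h_0')$ and $h_n=h_m'$, a brief rearrangement shows $h_0^{-1}h_0'=(t_1\cdots t_n)(t_1'\cdots t_m')^{-1}$, and $\varphi(h_0)=\varphi(h_0')$ again reduces to the identity $\varphi(t_1)\cdots \varphi(t_n)=\varphi(t_1')\cdots \varphi(t_m')$ in $S^{[l]}$, giving $h_0=h_0'$ by the same argument.

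I do not expect a serious obstacle: the proof is essentially a translation of the hypothesis $T^{[l]}\cong S^{[l]}$ into the combinatorial uniqueness condition that two different words in $T$ of length at most $l$ cannot collapse to the same element after applying $\varphi$. The only care required is to arrange both halves of condition (3) so that they reduce to this single algebraic identity, which the computation above does.
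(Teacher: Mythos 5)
Your proof is correct and follows essentially the same route as the paper's: reduce to condition (3) of Proposition \ref{prop:covering_description1}, obtain the $1$-covering from the bijection $T\cong S$, and obtain the path-uniqueness clause by applying the bijection $T^{[l]}\cong S^{[l]}$ to the products $t_1\cdots t_n$ (the paper writes out only the first case and treats the second as symmetric, exactly as you do). The only cosmetic point is that paths in a digraph may repeat consecutive vertices, so the increments $t_i$ should be taken in $T\cup\{1\}$ rather than $T$; this changes nothing, since the resulting product is still an element of $T^{[l]}$.
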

\begin{proof}
Since the map $T\to S$ is a bijection, it is obvious that the map is a $1$-covering. By Proposition \ref{prop:covering_description1}, in order to check that it is an $l$-covering, it is sufficient to check that for any paths $(h_0,\dots,h_n)$ and $(h'_0,\dots,h'_m)$ in $\Cay(H,T)$ such that $h_0=h'_0, \varphi(h_n)=\varphi(h'_m)$ (resp. $\varphi(h_0)=\varphi(h'_0), h_n=h'_m$) and $0\leq n,m\leq l$ we have $h_0=h'_0, h_n=h'_m.$ Assume that  $h_0=h'_0, \varphi(h_n)=\varphi(h'_m).$  Consider the sequences $t_1,\dots,t_n$ and $t_1',\dots,t'_m$ from $T\cup \{1\}$ such that $h_i=h_0 \tilde t_1\dots \tilde t_i$ and $h'_j=h_0\tilde t'_1\dots \tilde t'_j.$ The equation $\varphi(h_n)=\varphi(h'_m)$ implies the equation $\varphi(t_1\dots t_n ) = \varphi( t'_1\dots t'_m)$ in $G.$ By the assumption, we obtain $t_1\dots t_n=t'_1\dots t'_m.$ Therefore $ h_n=h'_m.$ 
\end{proof}

We say that the generating subset $S\subseteq G$ is an \emph{$l$-freely generating set} of $G,$ if $\Rel(G,S)=\Rel^l(G,S).$ 
Proposition \ref{prop:pi^l-Cay} implies that $S$ is $l$-freely generating set if and only if $\pi_1^l({\sf Cay}(G,S))=1.$ 

\begin{example} \ 
\begin{enumerate}
\item $S$ is $1$-freely generating if and only if the map $F(S)\to G$ is an isomorphism.
\item $G$ is $2$-freely generating set for itself, because $\Rel(G,G)$ is generated by elements  $\bar g \cdot \bar h \cdot  (\overline{gh})^{-1}\in W^2(G).$ 
\item If $G=\ZZ^n$ then the standard basis $S=\{e_1,\dots,e_n\}$ is a $2$-freely generating set of $G.$

\item If $G=\ZZ/n\ZZ,$ then $S=\{1+n\ZZ\}$ is $n$-freely generating set for $G$ but not $(n-1)$-freely generating.
\end{enumerate}
\end{example}

We set 
\begin{equation}
F^l(G,S) := F(S)/\Rel^l(G,S).
\end{equation}
and denote by $\tilde S$ the image of $S$ in $ F^l(G,S),$ and by $\tilde s\in \tilde S$ the image of an element $s\in S.$ It is easy to see that $\Rel^l(G,S)=\Rel^l(F^l(G,S),\tilde S).$ Therefore $\tilde S$ is $l$-freely generating in $F^l(G,S).$ The homomorphism $\sigma$ induces an epimorphism $\sigma^l:F^l(G,S)\epi G,$ which induces bijections $\tilde S\cong S$ and $\tilde S^{[l]}\cong S^{[l]}.$ Note that $S$ is $l$-freely generating if and only if the map $\sigma^l:F^l(G,S)\to G$ is an isomorphism. Moreover $\sigma^l$ is a universal homomorphism from a group with an $l$-generating subset in the following sense. 

\begin{proposition}\label{prop:universal-F^l}
Let $\varphi : H\to G$ be a group epimorphism and let $T\subseteq H,$ $S\subseteq G$ be generating subsets such that $\varphi$ induces bijections $T\cong S$ and $T^{[l]}\cong S^{[l]}.$ Assume that $T$ is $l$-freely generating.  Then there is a unique isomorphism 
\begin{equation}
\theta:F^l(G,S) \overset{\cong}\longrightarrow H 
\end{equation}
such that $\varphi(\tilde S)=T$ and the diagram 
\begin{equation}
\begin{tikzcd}
F^l(G,S)\ar[rr,"\cong"] \ar[rd,"\sigma^l"'] & & H \ar[ld,"\varphi"] \\
& G &    
\end{tikzcd}
\end{equation}
is commutative. 
\end{proposition}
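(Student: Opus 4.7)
The plan is to build $\theta$ via the universal property of the free group $F(S)$. Since $\varphi|_T : T \to S$ is a bijection, denote its inverse by $s \mapsto t_s$, so $t_s \in T$ is the unique preimage of $s$. The free group $F(S)$ then admits a unique homomorphism $\psi : F(S) \to H$ sending $\bar s \mapsto t_s$. The first step is to show that $\psi$ factors through $F^l(G,S)$, that is, $\psi(\Rel^l(G,S)) = 1$.

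Since $\Rel^l(G,S)$ is the normal closure of $\Rel(G,S) \cap W^l(S)$, it suffices to check $\psi$ on a typical generator $r = (\bar s_1 \cdots \bar s_n)(\bar u_1 \cdots \bar u_m)^{-1}$ with $n,m \leq l$ and $s_1 \cdots s_n = u_1 \cdots u_m$ in $G$. Its image is $\psi(r) = (t_{s_1}\cdots t_{s_n})(t_{u_1}\cdots t_{u_m})^{-1}$. The two products $t_{s_1}\cdots t_{s_n}$ and $t_{u_1}\cdots t_{u_m}$ lie in $T^{[l]}$, and their images under $\varphi$ are equal in $S^{[l]}$; the assumed bijection $T^{[l]} \cong S^{[l]}$ forces them to already be equal in $H$, so $\psi(r) = 1$. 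Hence $\psi$ descends to $\theta : F^l(G,S) \to H$ with $\theta(\tilde s) = t_s$, and then $\theta(\tilde S) = T$, the identity $\varphi \circ \theta = \sigma^l$ holds on generators and therefore everywhere, and $\theta$ is surjective because $T$ generates $H$.

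For injectivity, I would exploit that $T$ is $l$-freely generating, so $H = F^l(H,T) = F(T)/\Rel^l(H,T)$. The bijection $S \to T$, $s \mapsto t_s$, induces an isomorphism $F(S) \cong F(T)$ of free groups which restricts to a bijection $W^l(S) \cong W^l(T)$. The hypothesis $T^{[l]} \cong S^{[l]}$ shows this isomorphism identifies $\Rel(G,S) \cap W^l(S)$ with $\Rel(H,T) \cap W^l(T)$, and hence their normal closures $\Rel^l(G,S)$ and $\Rel^l(H,T)$. Passing to the quotients produces an inverse of $\theta$. Uniqueness is immediate: $\varphi \circ \theta = \sigma^l$ together with $\theta(\tilde S) \subseteq T$ pins down $\theta(\tilde s) = t_s$, and $\tilde S$ generates $F^l(G,S)$.

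The main obstacle, modest as it is, lies in invoking the bijection $T^{[l]} \cong S^{[l]}$ in two symmetric roles: first to push the length-$\leq l$ relations of $G$ into $H$ (the step that factors $\psi$), and second to match the two relation subgroups under the ambient isomorphism $F(S) \cong F(T)$ (which supplies the inverse of $\theta$). Both uses reduce to the same elementary observation that two products in $T$ of length $\leq l$ with equal images in $G$ must already be equal in $H$.
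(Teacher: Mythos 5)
Your proposal is correct and follows essentially the same route as the paper: factor $F(S)\to H$ through $F^l(G,S)$ using the bijection $T^{[l]}\cong S^{[l]}$ to kill the generators of $\Rel^l(G,S)$, then invert via the identification $H\cong F^l(H,T)$ supplied by the $l$-free generation of $T$ (the paper phrases this through the induced map $F^l(\varphi)$, you through the explicit isomorphism $F(S)\cong F(T)$ matching the relation subgroups, but these are the same map). The uniqueness argument is also the paper's.
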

\begin{proof} If such an isomorphism exists, the commutativity of the diagram implies that $\theta(\tilde s)=\varphi^{-1}(s).$ This proves that, if it exists, it is unique.  
Consider a homomorphism $\theta':F(S) \to H$ defined by $\theta'(\bar s)=\varphi^{-1}(s).$ Let us check that $\Rel^l(G,S)\subseteq \Ker(\theta').$ Take $(\bar s_1 \dots \bar s_n)\cdot (\bar s'_1\dots \bar s'_m)^{-1}\in W^l(G,S)\cap \Rel(G,S),$ where $n,m\leq l.$  Then $\varphi(\theta'(\bar s_1)\dots \theta'(\bar s_n) )=s_1\dots s_n=s'_1\dots s'_m = \varphi(\theta'(\bar s'_1)\dots \theta'(\bar s'_m))$ in $G.$ Since $T^{[l]}\to S^{[l]}$ is a bijection, we obtain $\theta'(\bar s_1)\dots \theta'(\bar s_n) = \theta'(\bar s'_1)\dots \theta'(\bar s'_m).$ Therefore $(\bar s_1 \dots \bar s_n)\cdot (\bar s'_1\dots \bar s'_m)^{-1}\in \Ker(\theta').$ Hence, $\theta'$ induces a homomorphism $\theta:F^l(G,S)\to H$ such that $\theta(\tilde s)=\varphi^{-1}(s).$  Let us prove that $\theta$ is an isomorphism. The homomorphism $\varphi$ induces a homomorphism $F^l(\varphi):F^l(H,T)\to F^l(G,S).$ Since $T$ is $l$-freely generating, $\sigma^l:F^l(H,T) \to H$ is an isomorphism. This defines a homomorphism $ F^l(\varphi)(\sigma^l)^{-1}: H\to F^l(G,S).$ It is easy to check that this homomorphism defines the inverse bijection on generators $T\overset{\cong}\to \tilde S.$ Therefore $\theta$ is an isomorphism. 
\end{proof}

\begin{proposition}\label{prop:cay_cover}
The morphism of Cayley digraphs 
\[{\sf Cay}(F^l(G,S),\tilde S) \to {\sf Cay}(G,S)\] is the universal $l$-covering. 
\end{proposition}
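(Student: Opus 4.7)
The plan is to verify the two defining properties of a universal $l$-covering for the morphism $p:{\sf Cay}(F^l(G,S),\tilde S)\to {\sf Cay}(G,S)$ induced by the epimorphism $\sigma^l:F^l(G,S)\epi G$, namely that (a) $p$ is an $l$-covering and (b) the source is connected with trivial $l$-fundamental group.

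For (a) I would invoke Proposition \ref{prop:l-covering-Caylay} with $H=F^l(G,S)$, $T=\tilde S$ and $\varphi=\sigma^l$. The hypothesis that $\sigma^l$ restricts to a bijection $\tilde S\cong S$ is built into the definition of $\tilde S$, and the hypothesis that it restricts to a bijection $\tilde S^{[l]}\cong S^{[l]}$ is exactly the statement recorded just before Proposition \ref{prop:universal-F^l}. Both properties follow from the defining relations of $F^l(G,S)$: any relation $\tilde s_1\dots\tilde s_n=\tilde t_1\dots\tilde t_m$ in $F^l(G,S)$ with $n,m\leq l$ projects to an equality in $G$, and conversely any such equality in $G$ comes from an element of $W^l(S)\cap \Rel(G,S)$, hence already holds in $F^l(G,S)$.

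For (b), connectedness of ${\sf Cay}(F^l(G,S),\tilde S)$ is automatic because $\tilde S$ generates $F^l(G,S)$. For the triviality of $\pi_1^l$, Proposition \ref{prop:pi^l-Cay} gives
\[\pi_1^l({\sf Cay}(F^l(G,S),\tilde S))\cong \rho^l(F^l(G,S),\tilde S)=\Rel(F^l(G,S),\tilde S)/\Rel^l(F^l(G,S),\tilde S).\]
As observed in the paragraph defining $F^l(G,S)$, the identification $\Rel^l(G,S)=\Rel^l(F^l(G,S),\tilde S)$ together with $\Rel(F^l(G,S),\tilde S)=\Rel(G,S)/\Rel^l(G,S)$ shows that $\tilde S$ is $l$-freely generating, so the quotient above is trivial.

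There is no real obstacle here: the statement is a packaging of already-proved facts, and the only thing to be careful about is to cite the right characterisation of the universal $l$-covering from the discussion before Proposition \ref{prop:fibers_of_universal} (a connected $l$-covering whose source has trivial $l$-fundamental group).
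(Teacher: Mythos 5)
Your proposal is correct and follows essentially the same route as the paper: the paper likewise observes that $\tilde S$ is $l$-freely generating in $F^l(G,S)$ so that Proposition \ref{prop:pi^l-Cay} gives $\pi_1^l(\Cay(F^l(G,S),\tilde S))=1$, and then deduces the $l$-covering property from the bijection $\tilde S^{[l]}\cong S^{[l]}$ via Proposition \ref{prop:l-covering-Caylay}. One small slip: the identity $\Rel(F^l(G,S),\tilde S)=\Rel(G,S)/\Rel^l(G,S)$ you write is not right (the left side is a subgroup of $F(\tilde S)\cong F(S)$, namely $\Ker(F(S)\epi F^l(G,S))=\Rel^l(G,S)$, while the right side is the quotient $\rho^l(G,S)$); the correct identification $\Rel(F^l(G,S),\tilde S)=\Rel^l(G,S)=\Rel^l(F^l(G,S),\tilde S)$ is what yields $l$-free generation, and this does not affect the rest of your argument.
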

\begin{proof}
Since $\tilde S$ is $l$-freely generating in $F^l(G,S)$, by Proposition \ref{prop:pi^l-Cay} the group $\pi_1^l( {\sf Cay}(F^l(G,S),\tilde S) )$ is trivial. Therefore, we just need to prove that the map is an $l$-covering. By the definition of $\Rel^l(G,S)$ it is easy to see that $\tilde S^{[l]} \to  S^{[l]}$ is a bijection. Then the fact that it is an $l$-covering follows from Proposition \ref{prop:l-covering-Caylay}.
\end{proof}

\subsection{Path homology of a Cayley digraph}

For a group $G$ and a generating subset $S\subseteq G$ we set
\begin{equation}
\tilde G = F^2(G,S).
\end{equation}

\begin{theorem}\label{theorem:main}
Let $G$ be a group and $S$ be its generating subset and $\rho:=\rho^2(G,S).$ Then there is a spectral sequence of homological type
\begin{equation}
E^2_{i,j} = H_i( \rho, \PH_j( \Cay(\tilde G,\tilde S) )  ) \Rightarrow \PH_{i+j}(\Cay(G,S)),
\end{equation}
where $H_*(\rho,-)$ is the group homology of the group $\rho,$ and $\PH_*( \Cay(\tilde G,\tilde S) )$ is treated as a $\KK[\rho]$-module, where the action of $\rho\subseteq \tilde G$ is defined by shifts in $\tilde G.$ 
\end{theorem}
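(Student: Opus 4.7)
The plan is to derive this spectral sequence by specializing Theorem \ref{theorem:covering_spectral} to the Cayley digraph $X=\Cay(G,S)$, and then translating every ingredient into group-theoretic data using the structural results of this section. Since $S$ generates $G$, the Cayley digraph is connected, and it is pointed by $1\in G$; so Theorem \ref{theorem:covering_spectral} applies and gives a spectral sequence
\[
E^2_{i,j}=H_i(\pi^2_1(X),\PH_j(U))\Rightarrow \PH_{i+j}(X),
\]
where $p_U:U\to X$ is the universal $2$-covering and $\PH_*(U)$ carries the Deck transformation action.

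By Proposition \ref{prop:cay_cover}, the universal $2$-covering is exactly $p_U:\Cay(\tilde G,\tilde S)\to \Cay(G,S)$, and by Proposition \ref{prop:pi^l-Cay} there is a canonical isomorphism $\pi^2_1(X)\cong \rho$. Substituting these identifications into the spectral sequence yields exactly the $E^2$-page of the statement, modulo identifying the $\pi^2_1(X)$-action on $\PH_*(\Cay(\tilde G,\tilde S))$ with the action of $\rho\subseteq \tilde G$ by left shifts as claimed.

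For this final identification, note that left multiplication by any $h\in \tilde G$ is an automorphism of the digraph $\Cay(\tilde G,\tilde S)$, since left and right multiplications in a group commute. If $h\in \rho=\Ker(\sigma^2:\tilde G\epi G)$, this automorphism lies over the identity of $\Cay(G,S)$, so left shifting defines a homomorphism $L:\rho\to {\sf Aut}_{{\sf Cov}_2(X)}(p_U)$. Both $\rho$ (via $L$, acting by left translation on the fiber $p_U^{-1}(1)=\rho\subseteq \tilde G$) and ${\sf Aut}_{{\sf Cov}_2(X)}(p_U)\cong \pi_1^2(X)$ (by Proposition \ref{prop:Deck}) act freely and transitively on this fiber, so $L$ is an isomorphism.

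The hard part will be to verify that $L$, composed with the isomorphism ${\sf Aut}_{{\sf Cov}_2(X)}(p_U)\cong \pi_1^2(X)$ of Proposition \ref{prop:Deck}, inverts the isomorphism $\pi_1^2(X)\cong \rho$ of Proposition \ref{prop:pi^l-Cay}. To do this I would pick a loop $\gamma$ at $1\in X$ represented by a word $\bar s_1\cdots \bar s_n\in \Rel(G,S)$: on one hand, Proposition \ref{prop:pi^l-Cay} sends $\gamma$ to $[\bar s_1\cdots \bar s_n]\in \rho$; on the other hand, by the path-lifting property (Corollary \ref{cor:l-path_lifting}), the unique lift of $\gamma$ starting at $1\in \tilde G$ ends at $\tilde s_1\cdots \tilde s_n\in \tilde G$, which is exactly the same class viewed inside $\tilde G$. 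Both the Deck transformation attached to $\gamma$ and the left shift by $\tilde s_1\cdots \tilde s_n$ therefore send $1\in \tilde G$ to the same vertex of the fiber; since each acts freely on the fiber, the two automorphisms coincide globally. This yields the required compatibility and completes the argument.
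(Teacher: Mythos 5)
Your proposal is correct and follows essentially the same route as the paper, whose proof is literally the one-line combination of Theorem \ref{theorem:covering_spectral} (the spectral sequence of the universal $2$-cover) with Proposition \ref{prop:cay_cover} (identifying that cover as $\Cay(\tilde G,\tilde S)$) and Proposition \ref{prop:pi^l-Cay} (identifying $\pi_1^2$ with $\rho$). The only difference is that you additionally spell out why the Deck-transformation action coincides with the action of $\rho$ by left shifts --- a verification the paper leaves implicit --- and your argument for it (both actions are free and transitive on the fiber $p_U^{-1}(1)=\rho$, and they agree on the basepoint by path lifting) is sound.
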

\begin{proof}
It follows from Theorem \ref{theorem:covering_spectral} and Proposition \ref{prop:cay_cover}. 
\end{proof}

We say that a digraph $X$ is $\KK$-acyclic, if $\tilde \PH_*(X)=0.$

\begin{corollary}\label{cor:main}
Under assumptions of Theorem \ref{theorem:main}, if  $\Cay(\tilde G,\tilde S)$ is $\KK$-acyclic, then 
\begin{equation}
\PH_*( \Cay(G,S) ) \cong  H_*( \rho ). 
\end{equation}
\end{corollary}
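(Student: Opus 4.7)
The plan is to recognize this as an immediate consequence of Corollary \ref{cor:uni_cov}, once the Cayley-specific universal cover is identified. First, I would invoke Proposition \ref{prop:cay_cover} to write the universal $2$-covering of $\Cay(G,S)$ as the map $p_U : \Cay(\tilde G,\tilde S) \to \Cay(G,S)$, and Proposition \ref{prop:pi^l-Cay} to identify $\pi_1^2(\Cay(G,S))$ with $\rho^2(G,S)=\rho$.

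With these identifications in place, the hypothesis that $\Cay(\tilde G,\tilde S)$ is $\KK$-acyclic translates directly into the vanishing $\widetilde{\PH}_*(U)=0$ appearing as the assumption of Corollary \ref{cor:uni_cov}. That corollary then delivers exactly
\[\PH_*(\Cay(G,S)) \cong H_*(\pi_1^2(\Cay(G,S))) = H_*(\rho),\]
which is the stated conclusion. Equivalently, one can see this degeneration directly from the spectral sequence of Theorem \ref{theorem:main}: the acyclicity forces $\PH_j(\Cay(\tilde G,\tilde S))=0$ for $j\geq 1$ and $\PH_0(\Cay(\tilde G,\tilde S))\cong \KK$ (since the digraph is connected), so the $E^2$-page is concentrated in the single row $j=0$, where $E^2_{i,0} = H_i(\rho,\KK) = H_i(\rho)$, and the spectral sequence collapses at $E^2$.

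The argument is essentially bookkeeping; there is no substantive obstacle, since the heavy lifting has already been carried out in Theorem \ref{theorem:covering_spectral}, Corollary \ref{cor:uni_cov}, and Propositions \ref{prop:pi^l-Cay} and \ref{prop:cay_cover}. The only minor point one would want to verify, if deriving the result from Theorem \ref{theorem:main} directly, is that the Deck action of $\rho$ on $\PH_0(\Cay(\tilde G,\tilde S))\cong \KK$ is trivial; this follows because left translations permute vertices within a single connected digraph and all such vertices represent the same class in $H_0$.
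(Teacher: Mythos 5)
Your proposal is correct and follows the same route the paper intends: the corollary is an immediate consequence of Corollary \ref{cor:uni_cov} (or equivalently of the collapse of the spectral sequence in Theorem \ref{theorem:main}) together with Propositions \ref{prop:cay_cover} and \ref{prop:pi^l-Cay}. Your extra remark verifying the triviality of the Deck action on $\PH_0$ is a sensible bit of diligence but raises no new issues.
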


\subsection{Cayley digraphs of abelian groups}

In this subsection we will consider abelian groups with additive notation. We say that a digraph $X$ is $\KK$-acyclic, if $\tilde \PH_*(X)=0.$ 

\begin{lemma}\label{lemma:Z1} 
The digraph $\Cay(\ZZ,\{1\})$ is $\KK$-acyclic for any $\KK$.
\end{lemma}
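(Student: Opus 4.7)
The plan is to identify the complex $\Omega(\Cay(\ZZ,\{1\}))$ explicitly and compute its homology by hand. The digraph $X=\Cay(\ZZ,\{1\})$ has vertex set $\ZZ$ and arrows $(k,k+1)$ for $k\in\ZZ$. The distance is $\dist(n,m)=m-n$ when $m\geq n$ and $+\infty$ otherwise, so the only paths in $X$ are the consecutive runs $(k,k+1,\dots,k+n)$, and $\AA_n(X)$ is the free $\KK$-module on these for every $n\geq 0$.

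The main step will be to show that $\Omega_n(X)=0$ for $n\geq 2$, using the description given by Proposition \ref{prop:description_of_PC}. For a chain $a=\sum_k c_k [k,k+1,\dots,k+n]\in \AA_n(X)$ and any index $1\leq i\leq n-1$, one has
\[ d_i(a)=\sum_k c_k\,[k,\dots,\widehat{k+i},\dots,k+n], \]
and each summand is a regular accessible sequence exhibiting a jump of distance $2$ at position $i$, hence not a path; the summands corresponding to distinct values of $k$ are linearly independent in $\RR_{n-1}(X)$. Therefore the condition $d_i(a)\in \AA_{n-1}(X)$ forces $c_k=0$ for every $k$, and Proposition \ref{prop:description_of_PC} then yields $\Omega_n(X)=0$.

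It then remains to compute the homology of the two-term complex $\Omega_1(X)\overset{\partial_1}{\to}\Omega_0(X)$, where $\Omega_0(X)$ is freely generated by the vertices $[k]$, $\Omega_1(X)$ is freely generated by the arrows $[k,k+1]$, and $\partial_1[k,k+1]=[k+1]-[k]$. Since chains are finite linear combinations, any element of $\ker(\partial_1)$ must have all coefficients equal along $\ZZ$ and hence zero, giving $\PH_1(X)=0$; the image of $\partial_1$ identifies $[k+1]$ with $[k]$ for every $k$, so $\PH_0(X)\cong \KK$. Consequently $\tilde\PH_*(X)=0$.

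The only subtle point is the vanishing of $\Omega_n(X)$ for $n\geq 2$: everything else is immediate from the essentially one-dimensional structure of this Cayley digraph. An alternative route would be to write $X$ as the filtered colimit of its finite linear subdigraphs and appeal to Corollary \ref{cor:colimit}, but the direct calculation above is already the shortest path.
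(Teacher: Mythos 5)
Your proof is correct, but it takes a genuinely different route from the paper. The paper writes $\Cay(\ZZ,\{1\})$ as the filtered colimit of the finite induced line digraphs $I^{-n,n}$, invokes Proposition \ref{prop:filtered_colimits} (Corollary \ref{cor:colimit}) to commute $\PH_*$ with the colimit, and then uses the contractibility of each $I^{-n,n}$ together with homotopy invariance of path homology. You instead compute the complex $\Omega(X)$ directly: your key observation — that for $n\geq 2$ and $1\leq i\leq n-1$ the face $d_i[k,\dots,k+n]$ is a regular accessible non-path, and that these faces are pairwise distinct basis elements of $\RR_{n-1}(X)$ as $k$ varies, so the criterion of Proposition \ref{prop:description_of_PC} forces $\Omega_n(X)=0$ — is sound, and the remaining two-term complex is handled correctly (finiteness of supports kills $\ker\partial_1$, and $\operatorname{im}\partial_1$ identifies all vertices). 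What your approach buys is self-containedness: it needs nothing beyond the structural description of $\Omega$ given in this paper, and in particular it does not appeal to homotopy invariance of $\PH_*$, which the paper imports from the GLMY literature. What the paper's approach buys is reusability: the colimit-plus-contractibility template is exactly what is iterated in Propositions \ref{prop:Z2^n} and \ref{prop:Z[1/2]}, where a hands-on identification of $\Omega$ would be much less tractable. Both arguments are complete; yours is arguably the more elementary for this particular lemma.
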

\begin{proof}
The digraph $\Cay(\ZZ,\{1\})$ is the union of finite induced line digraphs $I^{-n,n}$ on the set $\{-n,-n+1,\dots,n-1,n\}$ for $n\geq 1.$ Then $\Cay(\ZZ,\{1\})={\sf colim} I^{-n,n}.$ By Proposition \ref{prop:filtered_colimits} we obtain $\PH_*(\Cay(\ZZ,\{1\})) = {\sf colim}\: \PH_*(I^{-n,n}).$ Then the assertion follows from the fact that $I^{-n,n}$ is contractible. 
\end{proof}

\begin{lemma}\label{lemma:ref}
Let $G$ be a free abelian group with a basis $S.$ Then $\Cay(G,S)$ is $\KK$-acyclic for any $\KK.$    
\end{lemma}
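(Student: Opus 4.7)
The plan is to reduce to the finitely generated case via filtered colimits, then identify the resulting Cayley digraph with a box product of copies of $\Cay(\ZZ,\{1\})$ and invoke a K\"unneth-type argument together with Lemma \ref{lemma:Z1}.

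First, write $S=\bigcup_F F$ as the directed union of its finite subsets, and set $G_F:=\bigoplus_{s\in F}\ZZ\subseteq G$. The induced subdigraph of $\Cay(G,S)$ on $G_F$ is canonically isomorphic to $\Cay(G_F,F)$, and $\Cay(G,S)=\bigcup_F \Cay(G_F,F)$ is an exhaustive increasing union. By Corollary \ref{cor:colimit} we have $\PH_*(\Cay(G,S))\cong{\sf colim}_F\, \PH_*(\Cay(G_F,F))$, so it is enough to prove the statement when $S=\{s_1,\dots,s_n\}$ is finite.

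For finite $S$ we have $G\cong\ZZ^n$, and a direct inspection of vertices and arrows provides an isomorphism of digraphs
\[
\Cay(\ZZ^n,\{e_1,\dots,e_n\})\;\cong\;\Cay(\ZZ,\{1\})^{\square n}.
\]
By Lemma \ref{lemma:Z1} the factor $\Cay(\ZZ,\{1\})$ is $\KK$-acyclic, i.e.\ $\PH_*(\Cay(\ZZ,\{1\}))\cong\KK$ concentrated in degree $0$. Applying the K\"unneth formula for path homology of box products (mentioned in the introduction) inductively on $n$, and noting that there are no Tor corrections because the path homology of the factor is free and concentrated in a single degree, we conclude that $\Cay(\ZZ,\{1\})^{\square n}$ is $\KK$-acyclic, which completes the argument.

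The main technical ingredient is the K\"unneth formula for $\square$; if one wishes to avoid citing it, the alternative is to reduce further via Proposition \ref{prop:filtered_colimits} to the finite sub-cubes $I_N^{\square n}$, where $I_N$ is the line digraph on $\{-N,\dots,N\}$, and to observe that each $I_N$ is contractible in the digraph sense, hence so is its $n$-fold box product by homotopy invariance of $\square$. This direct route replaces the K\"unneth formula by homotopy invariance of path homology under box products of contractions, which is the place where a careful verification would be required.
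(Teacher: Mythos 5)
Your proposal is correct and follows essentially the same route as the paper: reduce to finite $S$ by filtered colimits, identify $\Cay(\ZZ^n,\{e_1,\dots,e_n\})$ with the $n$-fold box product of $\Cay(\ZZ,\{1\})$, and conclude by Lemma \ref{lemma:Z1} and the K\"unneth theorem for $\square$. The only nitpick is that for uncountable $S$ the directed union over finite subsets is not a sequence, so the reduction should cite Proposition \ref{prop:filtered_colimits} (as the paper does) rather than Corollary \ref{cor:colimit}.
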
 
\begin{proof}
If $S$ is finite, then it follows from Lemma \ref{lemma:Z1}, the formula 
\begin{equation}
\Cay(\ZZ^n,\{s_1,\dots,s_n\}) = \overset{n}{\underset{i=1}\Box} \Cay(\ZZ,\{1\}),
\end{equation}
where $s_1,\dots,s_n$ is the standard basis, and the K\"unneth theorem for the box product. If $S$ is infinite, we use the fact that $F$ is the filtered colimit of finitely generated free subgroups $\langle S' \rangle,$ where $S'\subseteq S$ is finite, and use Proposition \ref{prop:filtered_colimits}.
\end{proof}

\begin{theorem}\label{th:abelian}
Let $G$ be an abelian group and $S\subseteq G\setminus\{0\}$ be a generating subset such that
\begin{enumerate}
    \item for any $s,s'\in S$ we have $s+s'\notin S\cup \{0\};$
    \item for any $s_1,s_1',s_2,s_2'\in S$ the equation $s_1+s_1'=s_2+s_2'$ implies that either $(s_1,s_1')=(s_2,s_2')$ or $(s_1,s_1')=(s_2',s_2).$ 
\end{enumerate}
Consider the canonical epimorphism from the free abelian group $\ZZ^{\oplus S}\epi G$ and denote its kernel by 
$\rho = {\sf Ker}(\ZZ^{\oplus S}\epi G).$
Then there is an isomorphism
\begin{equation}
\PH_*(\Cay(G,S)) = \Lambda^* (\rho \otimes_\ZZ \KK), 
\end{equation}
where $\Lambda^*$ denotes the exterior algebra over $\KK.$  
\end{theorem}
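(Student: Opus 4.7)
The plan is to combine three ingredients: identify $\tilde G = F^2(G,S)$ explicitly as the free abelian group on $S$ using conditions (1) and (2), invoke Lemma \ref{lemma:ref} to see that $\Cay(\tilde G,\tilde S)$ is $\KK$-acyclic, apply Corollary \ref{cor:main}, and finish with the standard exterior-algebra formula for the homology of a torsion-free abelian group.

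The first step, and the main technical content, is to prove that $\tilde G \cong \ZZ^{\oplus S}$ and $\tilde S$ corresponds to the standard basis. To do this, I would enumerate the intersection $\Rel(G,S)\cap W^2(S)$, i.e., words of the form $\bar s_1\cdots \bar s_n(\bar t_1\cdots \bar t_m)^{-1}$ with $n,m\leq 2$ that map to $0$ in $G$, and identify the normal subgroup they generate in $F(S)$. Running through the cases $(n,m)$:
\begin{itemize}
\item $(1,0)$ or $(0,1)$ would require $s=0$, excluded since $0\notin S$.
\item $(2,0)$ or $(0,2)$ would require $s_1+s_2=0$, excluded by condition (1) since $0\in S\cup\{0\}$.
\item $(2,1)$ or $(1,2)$ would require $s_1+s_2\in S$, also excluded by (1).
\item $(1,1)$ yields $\bar s_1\bar t_1^{-1}$ with $s_1=t_1$, hence trivial in $F(S)$.
\item $(2,2)$ yields the relation $s_1+s_2=t_1+t_2$; condition (2) forces either the trivial word or a commutator $[\bar s_1,\bar s_2]$.
\end{itemize}
Hence $\Rel^2(G,S)$ is the normal closure in $F(S)$ of the commutators $[\bar s,\bar s']$ with $s,s'\in S$, which is precisely the commutator subgroup $[F(S),F(S)]$. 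Therefore $\tilde G=F(S)^{\sf ab}\cong \ZZ^{\oplus S}$ and $\tilde S$ is the standard basis.

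Given this identification, Lemma \ref{lemma:ref} shows that $\Cay(\tilde G,\tilde S)$ is $\KK$-acyclic, so Corollary \ref{cor:main} gives $\PH_*(\Cay(G,S))\cong H_*(\rho)$, where $\rho=\Ker(\ZZ^{\oplus S}\epi G)$ matches the statement of the theorem via Proposition \ref{prop:pi^l-Cay}. Since $\rho$ is a subgroup of a free abelian group it is itself free abelian, hence torsion-free. The standard computation $H_*(A;\KK)\cong \Lambda^*(A\otimes_\ZZ \KK)$ for torsion-free abelian $A$ (mentioned in the introduction) then completes the proof.

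The only real obstacle is the first step, and it is almost purely bookkeeping once one notices that condition (1) ensures $-s\notin S$ for all $s\in S$ (taking $s'=-s$ would give $s+s'=0\in S\cup\{0\}$) and that (2) is tailored to the $(2,2)$-case. The rest is an assembly of already-proved results.
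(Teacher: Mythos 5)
Your proposal is correct and follows essentially the same route as the paper: identify $\tilde G$ with $\ZZ^{\oplus S}$, then apply Lemma \ref{lemma:ref}, Corollary \ref{cor:main}, and the formula $H_*(\rho)\cong\Lambda^*(\rho\otimes_\ZZ\KK)$ for the free abelian group $\rho$. The only divergence is in the first step, where the paper checks that $\tilde S^{[2]}\to S^{[2]}$ is a bijection and invokes Proposition \ref{prop:universal-F^l}, whereas you compute $\Rel^2(G,S)=[F(S),F(S)]$ directly from its definition as a normal closure; both verifications rest on the same case analysis driven by conditions (1) and (2), and your enumeration of the cases $(n,m)$ is complete and correct.
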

\begin{proof} It is easy to see that the standard basis $\tilde S=\{e_s\mid s\in S\}$ in $\ZZ^{\oplus S}$ is a 2-freely generating subset and the map $\tilde S \to S$ is a bijection. The assumptions (1),(2) imply that $\tilde S^{[2]}\to S^{[2]}$ is a bijection. Then by Proposition \ref{prop:universal-F^l} we obtain 
$\tilde G=\ZZ^{\oplus S}.$
Lemma \ref{lemma:ref} implies that $\Cay(\tilde G,\tilde S)$ is $\KK$-acyclic. Then Corollary \ref{cor:main} implies that $\PH_*(\Cay(G,S))=H_*(\rho).$ Since $\rho$ is a subgroup of a free abelian group, $\rho$ is also a free abelian, and hence $H_*(\rho)=\Lambda^* (\rho \otimes \KK).$ 
\end{proof}

\begin{corollary}
Under the assumption of Theorem \ref{th:abelian}, the modules $\PH_n(\Cay(G,S))$ are free over $\KK.$
\end{corollary}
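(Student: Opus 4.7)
The plan is to observe that every ingredient in the conclusion of Theorem \ref{th:abelian} is manifestly free, so the corollary follows by simply unpacking the isomorphism $\PH_n(\Cay(G,S)) \cong \Lambda^n(\rho \otimes_\ZZ \KK)$ degree by degree.

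First I would note that $\rho$ is a subgroup of the free abelian group $\ZZ^{\oplus S}$, so by the standard structure theorem for subgroups of free abelian groups (valid for arbitrary rank), $\rho$ is itself a free abelian group. Pick a $\ZZ$-basis $\{e_\alpha\}_{\alpha \in A}$ of $\rho$. Then $\rho \otimes_\ZZ \KK$ is a free $\KK$-module with basis $\{e_\alpha \otimes 1\}_{\alpha \in A}$.

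Next I would invoke the standard fact that, for any free module $M$ over a commutative ring $\KK$ with basis indexed by a totally ordered set $A$, the exterior power $\Lambda^n M$ is a free $\KK$-module with basis $\{e_{\alpha_1} \wedge \dots \wedge e_{\alpha_n} \mid \alpha_1 < \dots < \alpha_n\}$; this uses only the universal property of the exterior algebra and the existence of a well-ordering on $A$. Combining this with the previous step yields that $\Lambda^n(\rho \otimes_\ZZ \KK)$ is free over $\KK$ for every $n$, and the corollary then follows from the isomorphism supplied by Theorem \ref{th:abelian}. There is no real obstacle here; the only mild subtlety is that $S$ (hence the basis of $\rho$) may be infinite, but this causes no trouble since freeness of subgroups of $\ZZ^{\oplus S}$ and freeness of exterior powers both hold without any finiteness hypothesis.
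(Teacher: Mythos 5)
Your proof is correct and matches the argument the paper intends: the corollary is stated without an explicit proof precisely because it follows immediately from the isomorphism $\PH_*(\Cay(G,S)) \cong \Lambda^*(\rho\otimes_\ZZ\KK)$ together with the observation, already made in the proof of Theorem \ref{th:abelian}, that $\rho$ is free abelian as a subgroup of a free abelian group, so $\rho\otimes_\ZZ\KK$ is a free $\KK$-module and its exterior powers are free. Your attention to the infinite-rank case is a reasonable extra care, but there is nothing genuinely different here from what the paper does.
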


On the next proposition we give an example of usage of Theorem \ref{th:abelian}. 

\begin{proposition} There is an isomorphism 
\[
\PH_*(\Cay( \QQ,\left\{1/n!\mid n\geq 2 \right\}))\cong \Lambda^*  (\KK^{\bigoplus \mathbb{N}}).   
\]
\end{proposition}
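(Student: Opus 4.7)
The plan is to apply Theorem \ref{th:abelian} directly to $G = \QQ$ and $S = \{1/n! \mid n \geq 2\}$. After verifying the two combinatorial hypotheses on $S$, the only remaining task is to identify the kernel $\rho = \Ker(\ZZ^{\oplus S} \epi \QQ)$ as a free abelian group of countably infinite rank.

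First I would verify hypothesis (1): for any $n,m \geq 2$ I need $1/n! + 1/m! \notin S \cup \{0\}$. Positivity rules out $0$. Assuming WLOG $n \leq m$, note that
\[
\tfrac{1}{n!} \;<\; \tfrac{1}{n!}+\tfrac{1}{m!} \;\leq\; \tfrac{2}{n!}.
\]
For $n \geq 3$, $2/n! < 1/(n-1)!$ since $n \geq 3 > 2$, so the sum lies strictly between two consecutive elements of $S$. For $n=2$, the sum lies in the open interval $(1/2,1)$ which contains no element of $S$ at all (the next element of $S$ above $1/2$ is $1/6$, which is smaller, and the only reciprocal factorials near $1$ are $1/1! = 1$ and $1/0! = 1$, excluded from $S$). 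For hypothesis (2), suppose $1/n_1! + 1/n_1'! = 1/n_2! + 1/n_2'!$ with $n_i \leq n_i'$, and assume $n_1 < n_2$. Then
\[
\tfrac{1}{n_1!} \;\leq\; \tfrac{1}{n_1!}+\tfrac{1}{n_1'!}, \qquad \tfrac{1}{n_2!}+\tfrac{1}{n_2'!} \;\leq\; \tfrac{2}{n_2!} \;\leq\; \tfrac{2}{(n_1+1)!} \;<\; \tfrac{1}{n_1!},
\]
a contradiction. Hence $n_1 = n_2$, and cancellation forces $n_1' = n_2'$.

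Second, I would identify $\rho$. The canonical epimorphism $\phi: \ZZ^{\oplus S} \epi \QQ$ sends the basis element $e_{1/n!}$ to $1/n!$; it is surjective because the subgroup generated by $\{1/n!\}_{n \geq 2}$ is all of $\QQ$ (every rational $p/q$ can be written as $p \cdot (q-1)!/q!$ up to signs and integer parts, which are obtained from $1 = 2 \cdot (1/2)$). Being a subgroup of a free abelian group, $\rho$ is free abelian. To compute its rank, tensor the short exact sequence $0 \to \rho \to \ZZ^{\oplus \mathbb{N}} \to \QQ \to 0$ with the flat $\ZZ$-module $\QQ$ to obtain
\[
0 \longrightarrow \rho \otimes_\ZZ \QQ \longrightarrow \QQ^{\oplus \mathbb{N}} \longrightarrow \QQ \longrightarrow 0,
\]
whence $\rho \otimes_\ZZ \QQ$ has countably infinite $\QQ$-dimension, so $\rho \cong \ZZ^{\oplus \mathbb{N}}$.

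Finally, Theorem \ref{th:abelian} gives $\PH_*(\Cay(\QQ, S)) \cong \Lambda^*(\rho \otimes_\ZZ \KK) \cong \Lambda^*(\KK^{\oplus \mathbb{N}})$, as claimed. There is no serious obstacle here: the deep content lives entirely in Theorem \ref{th:abelian}, and the present statement is just a verification of its hypotheses together with an elementary rank calculation for $\rho$.
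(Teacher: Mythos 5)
Your proposal is correct and follows essentially the same route as the paper: apply Theorem \ref{th:abelian} after checking its two hypotheses and then identify $\rho$ as a free abelian group of countably infinite rank. The only differences are cosmetic --- you verify hypotheses (1) and (2) by magnitude/interval estimates where the paper clears denominators and uses divisibility by $n$, and you get the rank of $\rho$ by tensoring the exact sequence $0\to\rho\to\ZZ^{\oplus\mathbb{N}}\to\QQ\to 0$ with $\QQ$ where the paper exhibits the explicit infinite independent set $\{(n+2)e_{n+1}-e_n\}$; both variants are sound.
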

\begin{proof}
First we need to check the assumptions of Theorem \ref{th:abelian}. 

(1) Assume the contrary, that $\frac{1}{n!} + \frac{1}{m!}=\frac{1}{k!}$ for $n,m,k\geq 2.$ Then $n,m>k.$ Without loss of generality we can assume that $n\geq m>k\geq 2.$ Multiplying by $n!,$ we obtain an equation for integer numbers
\begin{equation}\label{eq:fact1}
1+\frac{n!}{m!}=\frac{n!}{k!}.    
\end{equation}
If $n>m,$ then $\frac{n!}{m!}$ and $\frac{n!}{k!}$ are divisible by $n,$ and this contradicts to the equation \eqref{eq:fact1}. If $n=m,$ then $\frac{n!}{k!}$ is divisible by $n,$ which contradicts to the equation $2=\frac{n!}{k!}.$ 

(2) Assume that $\frac{1}{n!}+\frac{1}{m!}=\frac{1}{k!}+\frac{1}{l!}.$ Without loss of generality we can assume that $n\geq m,k,l.$ Multiplying by $n!$ we obtain an equation for integers
\begin{equation}\label{eq:fact2}
    1+\frac{n!}{m!} = \frac{n!}{k!} + \frac{n!}{l!}. 
\end{equation} 
If $n>m,l,k,$ then $\frac{n!}{m!}, \frac{n!}{k!}, \frac{n!}{l!}$ are divisible by $n,$ which contradicts to the equation \eqref{eq:fact2}. Therefore $n\in \{m,k,l\}.$ If $n=m>k,l$ then $\frac{1}{n!} + \frac{1}{m!}< \frac{1}{k!} + \frac{1}{l!}.$ Therefore $n\in \{k,l\}.$ If $n=k,$ then it is easy to see that $m=l;$ and dually, if $n=l$, then $k=m.$

We proved the assumption of Theorem \ref{th:abelian}. Therefore, we just need to prove that $\rho\cong \ZZ^{\oplus \mathbb{N}}.$ By the definition $\rho$ is the kernel of the map $\ZZ^{\oplus \mathbb{N}}\to \QQ$ that sends the element of the standard basis $e_n$ to $\frac{1}{(n+1)!}.$ So $\rho$ is a subgroup of a free abelian group, and then, $\rho$ is also free abelian. So we just need to check that $\rho$ is not finitely generated. Indeed $\{(n+2)e_{n+1}-e_{n}\mid n\in \mathbb{N} \}$ is an infinite linearly independent subset in $\rho,$ and thus, $\rho\cong \ZZ^{\oplus \mathbb{N}}.$
\end{proof}

Further we give some examples that do not satisfy the assumption of Theorem \ref{th:abelian}. 

\begin{proposition}\label{prop:Z2^n}
The digraph $\Cay(\ZZ, \{2^i\mid 0\leq i\leq n\})$ is $\KK$-acyclic for any $\KK$ and $n\geq 0.$   
\end{proposition}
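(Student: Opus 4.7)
The plan is to proceed by induction on $n$, reducing $\Cay(\ZZ, S_n)$ (where $S_n := \{2^i \mid 0\leq i\leq n\}$) to $\Cay(\ZZ, S_{n-1})$ via an explicit homotopy equivalence of digraphs, then invoking the homotopy invariance of path homology from GLMY theory \cite{grigor2014homotopy}. The base case $n=0$ is exactly Lemma \ref{lemma:Z1}.

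For the inductive step, I would define two digraph morphisms
\[
f\colon \Cay(\ZZ, S_n) \longrightarrow \Cay(\ZZ, S_{n-1}), \quad f(k) = \lfloor k/2 \rfloor,
\]
\[
g\colon \Cay(\ZZ, S_{n-1}) \longrightarrow \Cay(\ZZ, S_n), \quad g(k) = 2k.
\]
A quick case check verifies that both respect the arrow structure: $g$ sends an arrow of length $2^i$ with $i\leq n-1$ to an arrow of length $2^{i+1}\in S_n$; $f$ sends an arrow of length $2^i$ with $i\geq 1$ to an arrow of length $2^{i-1}\in S_{n-1}$, and an arrow of length $1$ either to an identity loop (if its source is even) or to an arrow of length $1$ (if its source is odd). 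In all cases the image pair lies in $X_1 = A(X)\cup \Delta(X_0)$.

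The key identities are $f\circ g = {\sf id}_{\Cay(\ZZ, S_{n-1})}$ on the nose, and $g\circ f(k) = 2\lfloor k/2\rfloor$, so $k - gf(k)\in \{0,1\}$ for every $k$. Since $1\in S_n$, this yields $\dist(gf(k), k)\leq 1$ uniformly in $k$, so $gf$ is one-step homotopic to ${\sf id}$ in the sense defined earlier in the paper. Hence $\Cay(\ZZ, S_n) \simeq \Cay(\ZZ, S_{n-1})$ as digraphs, and homotopy invariance of path homology combined with the inductive hypothesis gives the claim.

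The only subtle point I expect is the direction of the one-sided distance bound: arrows in $\Cay(\ZZ, S_n)$ go only in the positive direction, so $\dist(k, gf(k))$ is in fact infinite when $k$ is odd. Fortunately, the paper's definition of one-step homotopy asks only that one of the two one-sided distances $\dist(f,g)$ or $\dist(g,f)$ be $\leq 1$, which is exactly what holds here. Once this asymmetry is observed, no genuine obstacle remains.
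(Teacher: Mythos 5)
Your proof is correct and is essentially identical to the paper's: the same induction on $n$ with the doubling and floor-of-half maps (the paper names them in the opposite order), the same observation that one composite is the identity and the other is one-step homotopic to it via the length-$1$ arrows, and the same appeal to homotopy invariance. Your remark about which one-sided distance is finite is exactly the right subtlety, and in fact your case split for $g\circ f$ is stated more accurately than the paper's (which swaps the odd and even cases in a harmless typo).
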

\begin{proof} Set $X^n=\Cay(\ZZ, \{2^i\mid 0\leq i\leq n\}).$
The proof is by induction on $n.$ For $n=0$ it is the statement of Lemma \ref{lemma:Z1}. Assume that the statement holds for $n,$ and prove it for $n+1.$ Consider two maps $f,g:\ZZ\to \ZZ,$ where $f(x)=2x$ and $g(x)=\lfloor x/2 \rfloor.$ It is easy to see that $f$ defines a morphism of digraphs $f:X^n\to X^{n+1}.$ We also claim that $g$ defines a morphism of digraphs $g:X^{n+1}\to X^n.$ Let us check it. There are arrows of four types in $X^{n+1}:$ $(2x,2x+1),$ $(2x+1,2x+2),$ $(2x,2x+2^i),$ $(2x+1,2x+1+2^i)$ where $1\leq i\leq n+1.$ If we apply $g,$ we obtain the following pairs, which are either arrows or equations: $(x,x),$ $(x,x+1),$ $(x,x+2^{i-1}),$ $(x,x+2^{i-1}).$ The composition $gf:X^n\to X^{n}$ is identical. The composition $fg:X_{n+1}\to X^{n+1}$ is defined by the formula
\begin{equation}
(fg)(x)=\begin{cases}
    x, & \text{ if } x \text{ is odd}\\
    x-1, & \text{ if } x \text{ is even}.
\end{cases}
\end{equation}
Hence, for any $x\in \ZZ$ the pair $(fg(x),x)$ is in $X^{n+1}_1.$ Therefore, $fg$ is homotopic to ${\sf id}.$ It follows that $\PH_*(X^{n+1})=\PH_*(X^n).$
\end{proof}

\begin{proposition}\label{prop:Z[1/2]}
Let $\ZZ[\frac{1}{2}]$ denote the group of rational numbers of the form $m/2^n.$ Then the digraph $\Cay( \ZZ[\frac{1}{2}], \{\frac{1}{2^i}\mid i\geq 0\} )$ is $\KK$-acyclic for any $\KK.$  
\end{proposition}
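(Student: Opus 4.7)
The plan is to exhibit $\Cay(\ZZ[\tfrac{1}{2}], \{1/2^i \mid i \geq 0\})$ as a filtered colimit of subdigraphs, each of which is isomorphic to one of the digraphs shown to be $\KK$-acyclic in Proposition \ref{prop:Z2^n}, and then apply Corollary \ref{cor:colimit}.

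First, I would observe that $\ZZ[\tfrac{1}{2}] = \bigcup_{n\geq 0} \tfrac{1}{2^n}\ZZ$ and that, for any $x \in \tfrac{1}{2^n}\ZZ$, the element $x + 1/2^i$ lies in $\tfrac{1}{2^n}\ZZ$ if and only if $i \leq n$. Consequently, the induced subdigraph on the vertex set $\tfrac{1}{2^n}\ZZ$ coincides with $\Cay(\tfrac{1}{2^n}\ZZ, \{1/2^i \mid 0 \leq i \leq n\})$, and we have an increasing union
\begin{equation*}
\Cay(\ZZ[\tfrac{1}{2}], \{1/2^i \mid i \geq 0\}) \ = \ \bigcup_{n\geq 0} \Cay(\tfrac{1}{2^n}\ZZ, \{1/2^i \mid 0 \leq i \leq n\}).
\end{equation*}

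Next, I would use the group isomorphism $\tfrac{1}{2^n}\ZZ \xrightarrow{\cong} \ZZ$ given by multiplication by $2^n$. Under this isomorphism, the generating set $\{1/2^i \mid 0 \leq i \leq n\}$ is carried bijectively onto $\{2^{n-i} \mid 0 \leq i \leq n\} = \{2^j \mid 0\leq j\leq n\}$. Therefore we obtain an isomorphism of digraphs
\begin{equation*}
\Cay(\tfrac{1}{2^n}\ZZ, \{1/2^i \mid 0 \leq i \leq n\}) \ \cong \ \Cay(\ZZ, \{2^j \mid 0\leq j\leq n\}),
\end{equation*}
and the right-hand side is $\KK$-acyclic by Proposition \ref{prop:Z2^n}.

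Finally, applying Corollary \ref{cor:colimit} to the increasing sequence of subdigraphs from the first step, I conclude
\begin{equation*}
\PH_*(\Cay(\ZZ[\tfrac{1}{2}], \{1/2^i \mid i \geq 0\})) \ \cong \ \mathsf{colim}_{n} \PH_*(\Cay(\tfrac{1}{2^n}\ZZ, \{1/2^i \mid 0 \leq i \leq n\})).
\end{equation*}
Since each term of the colimit has trivial reduced path homology (and $\PH_0 = \KK$, as each piece is connected, with the transition maps being identities on $\PH_0$), the reduced path homology of the colimit vanishes as well. There is no serious obstacle here: the whole argument reduces to spotting the right exhaustion, with the only mild subtlety being the verification that the induced subdigraph on $\tfrac{1}{2^n}\ZZ$ really is a Cayley digraph on the truncated generating set rather than something larger.
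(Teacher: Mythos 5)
Your proposal is correct and follows essentially the same route as the paper: exhaust $\Cay(\ZZ[\tfrac{1}{2}], \{1/2^i \mid i\geq 0\})$ by the subdigraphs $\Cay(\tfrac{1}{2^n}\ZZ, \{1/2^i \mid 0\leq i\leq n\})$, identify each with $\Cay(\ZZ,\{2^j \mid 0\leq j\leq n\})$ via multiplication by $2^n$, invoke Proposition \ref{prop:Z2^n}, and pass to the colimit via Corollary \ref{cor:colimit}. Your extra verification that the induced subdigraph on $\tfrac{1}{2^n}\ZZ$ is exactly the truncated Cayley digraph is a detail the paper leaves implicit, and it checks out.
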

\begin{proof}
It is easy to see that $\Cay( \ZZ[\frac{1}{2}], \{\frac{1}{2^i}\mid i\geq 0\} ) = \bigcup_{n\geq 0}  \Cay( \frac{1}{2^n} \ZZ, \{\frac{1}{2^i}\mid 0\leq i\leq n\} ).$ By Proposition \ref{cor:colimit}, it is sufficient to prove that $\Cay( \frac{1}{2^n} \ZZ, \{\frac{1}{2^i}\mid 0\leq i\leq n\} )$ is $\KK$-acyclic. This follows from the fact that the multiplication on $2^n$ defines an isomorphism between digraphs $\Cay( \frac{1}{2^n} \ZZ, \{\frac{1}{2^i}\mid 0\leq i\leq n\} )$ and $ \Cay( \ZZ, \{2^i \mid 0\leq i\leq n\} ),$ and Proposition \ref{prop:Z2^n}.
\end{proof}

\begin{proposition}
Consider the circle group 
$\{z\in \mathbb{C} \mid |z|=1 \}$
and its subgroup $G=\{e^{m \pi/2^n} \mid n\geq 1\}.$ Then 
\begin{equation}
  \PH_*(\Cay(G,\{e^{\pi/2^n}\mid n\geq 1\})) = H_*(S^1),   
\end{equation}
where $H_*(S^1)$ is the homology of the circle. 
\end{proposition}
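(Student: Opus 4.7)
The plan is to apply Corollary \ref{cor:main}. Identify the circle via $e^{i\theta}\leftrightarrow \theta/(2\pi)\bmod \ZZ$, so that $G$ becomes the Pr\"ufer $2$-group $\ZZ[1/2]/\ZZ$ and $S$ becomes $\{s_k \mid k\geq 2\}$ with $s_k:=1/2^k+\ZZ$. The heart of the proof is the determination of $\tilde G=F^2(G,S)$.

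Two families of length-$\leq 2$ identities in $G$ give rise to elements of $\Rel(G,S)\cap W^2(S)$: the commutators $\bar s_i\bar s_j\,(\bar s_j\bar s_i)^{-1}$ (since $G$ is abelian), and the carries $\bar s_k^{\,2}\,\bar s_{k-1}^{-1}$ for $k\geq 3$ (because $1/2^k+1/2^k=1/2^{k-1}$). I would then verify that no further length-$\leq 2$ identities hold in $G$: using uniqueness of the binary expansion of a dyadic rational, $1/2^i+1/2^j$ is never integral for $i,j\geq 2$, equals $1/2^l$ only when $i=j$ (forcing $l=i-1\geq 2$), and equals $1/2^k+1/2^l$ only when the multisets $\{i,j\}$ and $\{k,l\}$ coincide. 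Hence $\tilde G$ is the abelian group on generators $\tilde s_k$ ($k\geq 2$) with relations $2\tilde s_k=\tilde s_{k-1}$ for $k\geq 3$. Successively rewriting $\tilde s_k=2\tilde s_{k+1}$ identifies $\tilde G$ with the filtered colimit of $\ZZ\xrightarrow{\cdot 2}\ZZ\xrightarrow{\cdot 2}\cdots$, i.e.\ $\tilde G\cong \ZZ[1/2]$ via $\tilde s_k\mapsto 1/2^k$, so $\tilde S$ corresponds to $\{1/2^k\mid k\geq 2\}$. Under this identification the covering $\tilde G\epi G$ becomes the canonical projection $\ZZ[1/2]\epi \ZZ[1/2]/\ZZ$, so $\rho\cong \ZZ$.

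Next I would verify that $\Cay(\tilde G,\tilde S)$ is $\KK$-acyclic by adapting Proposition \ref{prop:Z[1/2]}. The decomposition
\[
\Cay\bigl(\ZZ[1/2],\{1/2^k\mid k\geq 2\}\bigr) = \bigcup_{n\geq 2}\Cay\bigl(\tfrac{1}{2^n}\ZZ,\{1/2^k\mid 2\leq k\leq n\}\bigr)
\]
reduces the problem to each finite stage, and multiplication by $2^n$ gives an isomorphism of the $n$-th stage with $\Cay(\ZZ,\{2^j\mid 0\leq j\leq n-2\})$, which is $\KK$-acyclic by Proposition \ref{prop:Z2^n}. Corollary \ref{cor:colimit} then yields $\widetilde{\PH}_*(\Cay(\tilde G,\tilde S))=0$.

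Finally, Corollary \ref{cor:main} gives $\PH_*(\Cay(G,S))\cong H_*(\rho)=H_*(\ZZ)=H_*(S^1)$. The main obstacle I anticipate is the calculation of $\tilde G$: one must confirm the enumeration of length-$\leq 2$ relations is exhaustive and then recognize that the resulting abelian presentation collapses precisely to $\ZZ[1/2]$.
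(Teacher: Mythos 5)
Your proof is correct and follows the same route as the paper: identify $G$ with $\ZZ[1/2]/\ZZ$, show $\tilde G\cong\ZZ[1/2]$ and hence $\rho\cong\ZZ$, establish $\KK$-acyclicity of the dyadic Cayley digraph via Propositions \ref{prop:Z2^n} and \ref{prop:Z[1/2]}, and conclude with Corollary \ref{cor:main}. The only difference is in one step and is essentially cosmetic: the paper identifies $\tilde G$ by checking that $\{1/2^{n+1}\mid n\geq 1\}$ is $2$-freely generating in $\ZZ[1/2]$ and that $\tilde S^{[2]}\to S^{[2]}$ is a bijection, then invoking the universal property (Proposition \ref{prop:universal-F^l}), whereas you compute the presentation of $F^2(G,S)$ directly and recognize it as $\ZZ[1/2]$; both reduce to the same arithmetic about sums of at most two dyadic fractions.
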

\begin{proof}
Note that $G\cong \ZZ[ \frac{1}{2}]/\ZZ$ and this isomorphism takes $\{e^{\pi/2^n}\mid n\geq 1\}$ to 
$S:=\{\frac{1}{2^{n+1}} +\ZZ \mid n\geq 1 \}.$ The set  $\tilde S=\{\frac{1}{2^{n+1}} \mid n\geq 1  \} $ is $2$-freely generating in $\ZZ[\frac{1}{2}]$ and the maps $\tilde S\to S $ and $\tilde S^{[2]}\to S^{[2]}$ are bijections (here it is important that $\frac{1}{2^{n+1}}+\frac{1}{2^{m+1}}\notin \ZZ$ for $n,m\geq 1$ because $0\in \tilde S^{[2]}$). By Proposition \ref{prop:universal-F^l} we obtain $\tilde G=\ZZ[\frac{1}{2}].$ Therefore $\rho=\ZZ.$ Note that $H_*(\ZZ)=H_*(S^1).$ Then the assertion follows from 
Corollary \ref{cor:main} and  Proposition \ref{prop:Z[1/2]}.
\end{proof}

\printbibliography

\end{document}